\documentclass[12pt, reqno]{amsart}
\usepackage{amssymb,latexsym,amsmath,amscd,amsfonts}
\usepackage{latexsym}
\usepackage[mathscr]{eucal}
\usepackage{bm}
\usepackage{hyperref}
\usepackage{cleveref}
\usepackage{enumerate}
\usepackage{xcolor}

\voffset = -54pt \hoffset = -54pt \textwidth = 6.3in \textheight = 9.5in \numberwithin{equation}{section}

\newcommand{\beg}{\begin{equation}}
    \newcommand{\eeg}{\end{equation}}
\newcommand{\ben}{\begin{eqnarray*}}
    \newcommand{\een}{\end{eqnarray*}}

\newtheorem{thm}{Theorem}[section]
\newtheorem{cor}[thm]{Corollary}
\newtheorem{lem}[thm]{Lemma}

\newtheorem{prop}[thm]{Proposition}
\numberwithin{equation}{section}
\theoremstyle{definition}
\newtheorem{defn}[thm]{Definition}
\newtheorem{rem}[thm]{Remark}

\newtheorem{eg}[thm]{Example}

\makeatletter \@namedef{subjclassname@2020}{\textup{2020}
Mathematics Subject Classification} \makeatother

\begin{document}

 \title[Spectral radii]{Spectral radii for subsets of Hilbert $C^*$-modules and
 spectral properties of positive maps}
    \author[Bhat, Saha and Sahasrabudhe]{B V Rajarama Bhat, Biswarup Saha and Prajakta Sahasrabuddhe}
    \address[Statistics and Mathematics Unit, Indian Statistical Institute, R V College Post, Bangalore - 560059, India]{}
    \email{bvrajaramabhat@gmail.com}
    \address[]{}
    \email{brsaha27700@gmail.com}
    \address[]{} \email{praju1093@gmail.com}

  \keywords{spectral radius, joint spectral radius, Hilbert modules, completely positive maps}

    \subjclass[2020]{46L07, 46L08, 15A60, 15B48}

\begin{abstract}
The notions of  joint and outer spectral radii are extended to the
setting of  Hilbert $C^*$-bimodules.  A Rota-Strang type
characterisation is proved for the joint spectral radius.  In this
general setting, an approximation result for the joint spectral
radius in terms of the outer spectral radius has been established.

This work leads to a new proof of the Wielandt-Friedland's  formula
for the spectral radius of  positive maps. Following an idea of  J.
E. Pascoe,  a positive map called the {\em maximal part\/} has been
associated to any positive map  with non-zero spectral radius, on
finite dimensional $C^*$-algebras.  This provides a constructive
treatment of the Perron-Frobenius theorem. It is seen that the
maximal part of a completely positive map has a very simple
structure and it is irreducible if and only if the original map is
irreducible.

It is observed that algebras generated by tuples of matrices can be
determined and their dimensions can be computed by realizing them as
linear span of Choi-Kraus coefficients of some easily computable
completely positive maps.

    \end{abstract}

    \maketitle

\section{Introduction}\label{Outer:sec:intro}
Throughout this paper $M_m$ denotes the $C^*$-algebra of $m\times m$
complex matrices where $m$ is a natural number and  $\mathcal{A}$
denotes a general unital $C^*$-algebra. It is well-known that the spectral radius of an element $a\in\mathcal{A}$, denoted $r(a)$, can be obtained by the Gelfand formula (for a detailed proof, see Murphy \cite[Theorem 1.2.7]{MurGJ90}):
\[ r(a)=\lim_{n\to \infty} \|a^n\|^{\frac{1}{n}}. \]
Taking a cue from this, the notion of {\em joint spectral radius\/}
for collections of unital normed algebra elements was first introduced by
Rota and Strang \cite{RotGC:StrG60}. For any bounded subset $S$ of
$\mathcal{A}$, they define the joint spectral radius of $S$  by
\begin{equation}\label{outer:eqjoint}
	\rho(S):=\lim\limits_{n\to\infty}\sup\limits_{a_1,\ldots,a_n\in S}\|a_1\cdots a_n\|^{1/n}.
\end{equation}They characterize this quantity as follows. Let
$\mathfrak{N}_{\mathcal{A}}$ be the family of all norms on
$\mathcal{A}$, which are equivalent to the original norm and under
which $\mathcal{A}$ is a normed algebra.  With this notation
(\cite[Proposition 1]{RotGC:StrG60}):
\begin{equation}\label{outer:eqjoint2}
	\rho (S)= \inf\limits_{N\in\mathfrak{N}_{\mathcal{A}}}\,\sup\limits_{a\in S}N(a).
\end{equation}

The problem
of understanding the convergence behaviour of products of matrices
appears in various mathematical projects. It is only natural that the
notion of joint spectral radius plays a crucial role in such
situations. Research advancements in the last few decades have led to
further exploration of the concept of joint spectral radius, and the
literature is vast. We cite investigations by Daubechies and
Lagarias  \cite{DauI:LagJC92}, Berger and Wang  \cite{BerMA:WanY92}
and Elsner \cite{ElsL92}. Computational challenges  in determining the
joint spectral radius spurred the development of efficient
approximation methods. See, for instance,  Parrilo and Jadbabaie
\cite{ParPA:JadA08}, and Blondel and Nesterov \cite{BloVD:NesY05}.
Recently, Pascoe (\cite{PasJE19}, \cite{PasJE21})  examined a special
approximation procedure for the joint spectral radius by introducing
a notion of {\em outer spectral
	radius\/} for $p$-tuple $(A_1, A_2, \ldots , A_p)$ of $m\times m$
matrices via the formula
\begin{equation}\label{pascoes_formula} \widehat{\rho}(A_1,\dots, A_p)=\sqrt{r\left(\sum _{i=1}^p \overline{A_i}
		\otimes A_i\right)}, \end{equation} where $A\otimes B$ denotes the
usual Kronecker product of two matrices and $\overline{A}$ denotes
the complex conjugate of $A$. {Various generalizations of the joint spectral radius have been proposed recently. For instance, Shalit and Shamovich \cite{ShaOM:ShaE25} define a spectral radius for matrices over operator spaces to study simultaneous similarities, whereas our approach is based on completely positive maps and GNS-bimodules.}

Pascoe's notion of the outer spectral radius unveils connections to
completely positive (CP) maps, offering insights into the quantum
dynamics on $M_m$.  The author did notice this connection. However,
all his computations are mostly based on the formula \ref{pascoes_formula}. This amounts to taking a matrix representation of the
completely positive map and using vectorization and other similar
methods that use coordinates. This restricts the applicability of
the theory to completely positive maps on finite dimensions. In this
article, we consider positive maps and completely positive maps on
general $C^*$-algebras. This necessitates a more abstract approach.
In particular, we are led to defining a joint spectral radius for
subsets of Hilbert $C^*$-bimodules and an outer spectral radius for
sequences of elements from Hilbert $C^*$-bimodules.

Here is a brief outline of the contents and the organisation of this
article. In Section 2, we collect some definitions and important
results from the literature essential for our discussion. In Section
\ref{outer:sec:positive}, we begin by proving some basic results
for general positive maps on unital $C^*$-algebras. These are
motivated by similar results obtained by Popescu in \cite{PopG14}.
His focus was $w^*$-continuous positive maps on
$\mathscr{B}(\mathcal{H})$, where $\mathcal{H}$ is a possibly
infinite dimensional Hilbert space and his methods make use of this
setup. We modify the proofs to suit our situation.
Well-known Perron-Frobenius theory tells us that spectral radii of
non-negative matrices are eigenvalues.  In Theorem \ref{outer:thm1}
we show that the spectral radius is a spectral point (but not
necessarily an eigenvalue) for positive maps on general
$C^*$-algebras. This is a known classical result (Karlin (see
\cite[Theorem 4]{KarS59}), Bonsall (see \cite[Lemma 5]{BonFF55}) and
Schaefer (see \cite[Proposition 1]{SchH60}), however, we have it
here by fairly elementary methods.

In Section \ref{outer:sec:HilbertbimoduleSubsets}, we generalize the
notion of the joint spectral radius and the outer spectral radius to
the setup  of Hilbert $C^*$-bimodules. Let $E$ denote any Hilbert
$\mathcal{A}$--$\mathcal{A}$--bimodule, and let $G$ be any bounded
subset of $E$. Taking inspiration from the definition of the joint
spectral radius of a bounded subset of a Banach algebra, we define
the joint spectral radius of $G$, using the interior tensor product $\odot$ on $E$ by
\begin{equation}\label{interior} \rho_{E}(G):=\lim_{n\to \infty}\sup_{\xi_1,\dots ,\xi_n\in
		G}\| \xi_1 \odot \dots \odot \xi_n \|^{\frac{1}{n}}. \end{equation}
We show that this is well-defined and has analogous properties.
Further, to define the outer spectral radius, we first observe that
with any $p$-tuple $A=(A_1,\dots ,A_p)$ of $m\times m$ matrices, one
can associate a completely positive (CP) map $\tau_A:M_m \to M_m$
defined by $\tau_A(X)= \sum_{i=1}^p A_i^*XA_i$. Clearly, $\sum
_i \overline{A}_i \otimes A_i$ is the matrix of $\tau_A^*$ where $\tau_A^*(X)=\sum_{i=1}^p A_iXA_i^*$ and $\widehat{\rho}(A_1,\dots
,A_p )= \sqrt{r(\tau_A^*)}=\sqrt{r(\tau_A)}$, that is, the outer spectral radius
defined in (\ref{pascoes_formula}) is nothing but the square root
of the spectral radius of the associated CP map $\tau_A$. Taking motivation
from this, consider  a finite or countably infinite sequence $G=
(\xi _1, \xi _2, \ldots )$ of elements of $E$ for which
$\tau_G(a):=\sum _i\langle\xi_i,a\xi_i\rangle$ defines a CP map on
$\mathcal{A}$ and define the outer spectral radius of $G$ by
\begin{equation}
	\widehat{\rho}_E(G):=\sqrt{r(\tau_G)}.\end{equation}
Note that the joint
spectral radius is defined for bounded subsets of Hilbert
$C^*$-bimodules, whereas, the outer spectral radius is naturally
defined for (finite or infinite) sequences of elements of Hilbert
$C^*$-bimodules. Here, repeating some elements does make a difference.
However, for any such sequence, the associated joint spectral radius, by definition, is the joint spectral radius of the set $\{\xi_i:
i\geq 1\},$  and here repetitions of terms will not make any
difference.

With definitions of joint spectral radius and outer spectral radius
in place, we can study and compare them. In Theorem
\ref{Outer:thm:Rota}, we give a Rota-Strang type characterisation
for the joint spectral radius. The main difference from classical
Rota-Strang theory is that we are forced to consider equivalent
norms on the full Fock bimodule generated by the Hilbert $C^*$-bimodule
$E$, with some sub-multiplicative properties. Previously, it was enough
to consider norms on the original algebra. This is natural in view
of the definition of the joint spectral radius in (\ref{interior}).

While developing this theory of spectral radii, we observe
connections with some well-known results. A famous theorem of
Wielandt \cite{WieH50} provides a simple way of computing the
spectral radius of an irreducible entry-wise positive matrix
$A=[a_{ij}]_{1\leq i,j\leq m}$ by showing
\[r(A)= \min _{x>0}\max _{1\leq i\leq m}\frac{(Ax)_i}{x_i}.\]
This result was generalized considerably by Friedland \cite{FriS91}.
He showed that  the spectral radius of a positive map $\varphi $
on a unital $C^*$-algebra $\mathcal{A}$ is given by
\begin{align}\label{outer:friedland}
	r(\varphi ) =\inf \{ r(v^{-1}\varphi (v)): v\in \mathcal {A}~~\mbox{is strictly
		positive}\}.
\end{align}
We obtain these results as simple consequences of our analysis of
spectral radii for subsets of Hilbert $C^*$-bimodules in Section \ref{Outer:sec:specrad}.
We obtain inequalities between the joint and outer spectral radii,
culminating in an approximation theorem for the joint spectral radius of
finite sets in terms of outer spectral radius (See Theorem
\ref{outer:thm22}):
\[\rho_E(\xi_1,\ldots,\xi_d)= \lim\limits_{k\to\infty}\widehat{\rho}_{E^{\otimes k}}(\xi_1^{\otimes k},\ldots,\xi_d^{\otimes
	k})^{1/k}.\]
This is a generalization of a similar result for matrices proved by Pascoe \cite{PasJE21}.

In  Section \ref{Outer:sec:positivemap}, we focus on the study of
positive maps on finite dimensional $C^*$-algebras and their
spectral properties. We return to Perron-Frobenius theory.  Perron
(\cite{PerO07-1}, \cite{PerO07-2}) and Frobenius (\cite{FroVG08},
\cite{FroVG12}) discovered that the spectral radius of matrices with
positive entries is an eigenvalue with a corresponding eigenvector
of positive entries. In fact, they established that if the matrix is
irreducible, then its spectral radius is a simple eigenvalue with
unique positive eigenvectors (up to scalar multiplication). Evans
and H\o egh-Krohn \cite{EvaDE:HoeR78} extended this classical result
to finite-dimensional $C^*$-algebras by broadening the notion of
irreducibility to positive maps on such algebras: A positive map
$\varphi$ on $\mathcal{A}$ is said to be irreducible if there is no
non-trivial projection $p\in \mathcal{A}$ for which the hereditary
$C^*$-subalgebra $p\mathcal{A}p$ is invariant under $\varphi$.
Further observations by Albeverio-H\o egh-Krohn \cite{AlbS:HoeR78},
Groh (\cite{GroU81}, \cite{GroU82}), Schrader \cite{SchR01} and
Lagro-Yang-Xiong \cite{LarM:YanWS:XioS17} highlighted the intriguing
spectral properties possessed by strictly positive or irreducible
positive maps.

For a positive map on a $C^*$-algebra, we will call any positive
eigenvector, with spectral radius as the eigenvalue, a
Perron-Frobenius eigenvector. Pascoe  \cite{PasJE21}  introduced the
concept of maximal spectrum for matrices, as comprising eigenvalues with maximum modulus and maximal degeneracy index. Using
Jordan decomposition, he demonstrated that the spectral radius of a
matrix of the form $\sum _i \overline{A_i}\otimes A_i$ lies within
the maximal spectrum. Inspired by Pascoe's approach, we formally
introduce a notion of maximal part of a positive map $\varphi$ with
strictly positive spectral radius, on a finite dimensional
$C^*$-algebra $\mathcal{A}$. It is defined as the positive
map\begin{equation}\label{maximal_part}
	\widehat{\varphi}=\lim\limits_{N\to\infty}\frac{1}{N}\sum\limits_{n=1}^N
	\frac{\varphi^n}{\lVert\varphi^n\rVert_{\beta}},\end{equation} where
$\beta$ is a Jordan basis of $\varphi$. Analyzing this map, we see
that (Theorem \ref{outer:thm7})  the spectral radius is in the
maximal spectrum of $\varphi $, that is, it has Jordan blocks of
maximum size among eigenvalues of the same modulus (see Definition
\ref{maximal spectrum}). Next in Theorem \ref{outer:thm6}, we
observe  that $\widehat{\varphi }$ has remarkable commutation
properties with $\varphi $ and $\widehat{\varphi}(1)$ serves as a
Perron-Frobenius eigenvector of $\varphi$. This is significantly
different from the work Evans and H\o egh-Krohn \cite{EvaDE:HoeR78},
as we now have got  a constructive method to obtain the
Perron-Frobenius eigenvector.

In the last Section, we restrict our attention to completely
positive (CP) maps on finite dimensional $C^*$ algebras. We tackle
two issues described below.

Given a $p$-tuple of matrices, say $A_1, A_2, \ldots , A_p$ in $M_m$, the first question is how to get a basis for the algebra (not $*$-algebra) generated by them.  In \cite{PasJE19}, Pascoe
developed an efficient method for the same. We show that his method
has a very natural interpretation on viewing the tuple as Choi-Kraus
coefficients of the CP maps $\tau (X)= \sum _{i=1}^p A_i^*XA_i$ on
$M_m.$ The algebra we are interested in can be constructed as the space
of Choi-Kraus coefficients of some other CP maps constructed out of
$\tau $.  Theorem \ref{outer:thm12} provides an ample supply of such
CP maps. Pascoe \cite{PasJE19} obtained one such map, namely,
$\gamma_1=(1-b\tau)^{-1}-1 $ where $b$ is a scalar such that
$br(\tau)<1.$ In fact, using the finite dimensionality in Theorem
\ref{outer:thm13}, we show that there is a polynomial $p$ of degree
not exceeding $m^2$, with strictly positive scalar coefficients such
that the Choi-Kraus coefficient space of $p(\tau)$ is equal to the (unital)
subalgebra of $M_m$ generated by $A_i$'s.

The second issue is about  obtaining a characterization of
irreducibility of CP maps in terms of their Choi-Kraus coefficients.
It is well-known that a CP map on $M_m$ is irreducible if and only
if its Choi-Kraus coefficients do not admit any non-trivial common
invariant subspace (\cite{FagF:PelR09}, \cite{FarDR96}). This
condition is no longer necessary for irreducibility of CP maps on
general finite dimensional $C^*$-algebras (See Example
\ref{outer:eg3}). To resolve this for a CP map $\tau$ on a unital
$C^*$-subalgebra $\mathcal{A}$ of $M_m$, we introduce a notion of
the  canonical extension $\widetilde{\tau}$ of $\tau$ to $M_m$ and
in Proposition \ref{outer:prop8}, we establish equivalence between
irreducibility of $\tau$ and $\widetilde{\tau} $. Hence, the desired
necessary and sufficient criterion for the irreducibility of $\tau$
can be described in terms of the Choi-Kraus decompositions of
$\widetilde{\tau}$.

Finally, in Theorem \ref{outer:thm9} we study consequences of
irreducibility condition on the maximal part. This is possible as
irreducibility implies that the map has strictly positive spectral
radius (see Remark \ref{outer:rem1}).  The main observation is that
the maximal part of an irreducible CP map on a finite dimensional
$C^*$-algebra has the form: \[X\mapsto \text{trace}(RX)\,L\] with a
strictly positive density matrix $R$ and a strictly positive element
$L$. Two examples are given to illustrate this.

\section{Preliminaries and background material}\label{Outer:sec:prelim}
In this section, we describe the notation and recall the essential background material concerning completely positive maps, Hilbert bimodules and full Fock modules that will be utilized throughout the paper.
\subsection{Completely positive maps}
For the basic theory of $C^*$-algebras, positive and completely
positive (CP) maps, we refer to (\cite{MurGJ90}, \cite{PauV03},
\cite{BhaBVR:BhaT23}). Typically, we denote a map that is just positive by $\varphi$, and a map that is CP by $\tau$. We recall the well-known Choi-Kraus
decomposition (\cite{ChoMD75}, \cite{BhaR07}) of CP maps on matrix
algebras. A linear map $\tau:M_m\to M_m$ is CP if and only if there
exist $A_1,\ldots, A_p\in M_{m}$, for some $p$, such that
\begin{equation}\label{outer:Choi-Kraus}
	\tau(X)=\sum\limits_{i=1}^p A_i^*XA_i, ~~\forall X\in
	M_m.\end{equation}
The matrices $A_i$'s are known as Choi-Kraus
coefficients of the CP map $\tau$. This decomposition for $\tau $
is not unique. However, if
$$\tau (X)= \sum _{j=1}^qB_j^*XB_j, ~~\forall X\in M_m,$$
is another such decomposition, then every $A_i$ is a linear
combination of $B_j$'s and conversely every $B_j$ is a linear
combination of $A_i$'s. In particular,
$$\mathcal {M}_\tau := \mbox{span}\{A_i: 1\leq i\leq p\}$$
is a subspace of $M_m$, independent of the decomposition. The space
$\mathcal{M}_\tau $ is said to be the \emph{Choi-Kraus coefficient
	space} (or, \emph{coefficient space} in short) of $\tau$. Dimension
of $\mathcal{M}_\tau$ is called the Choi rank of the CP map $\tau$.
W. Arveson \cite{ArvW97}  considers the adjoint of this space
(called the metric operator space of $\tau$) and defines a metric on
it, which is not relevant for this article. For CP maps $\tau$ and $\eta$, $\eta $ is said
to be {\em dominated} by $\tau $ if $\tau -\eta $ is also CP. By
considering Choi-Kraus representations of $\eta $ and $\tau -\eta $,
it is clear that if $\eta $ is dominated by $\tau $, then
$\mathcal{M}_{\eta }\subseteq \mathcal{M}_{\tau }.$

We are interested in determining the
algebra  (not the $*$-algebra) generated by elements of $\mathcal
{M}_\tau.$ In this context, the following observation is useful:

\begin{prop}\label{outer:prop2}
	Let $\tau:M_m\to M_m$ be a CP map. Then $A\in M_{m}$ is in the
	metric space $\mathcal{M}_\tau $ if and only if there exists $q>0$
	such that $\alpha_A$ is dominated by $q\tau$, where $\alpha_A(X)=A^*XA$.
\end{prop}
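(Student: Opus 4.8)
The statement is an equivalence whose two directions are of quite different character, so I would treat them separately. The \emph{if} direction is essentially free: if $\alpha_A$ is dominated by $q\tau$ for some $q>0$, then by the monotonicity of coefficient spaces recorded just above the statement (domination forces $\mathcal M_{\alpha_A}\subseteq\mathcal M_{q\tau}$), together with the obvious identities $\mathcal M_{\alpha_A}=\mathbb{C}A$ and $\mathcal M_{q\tau}=\mathcal M_\tau$ (rescaling Choi--Kraus coefficients by $\sqrt q$ does not change their span), we conclude $A\in\mathcal M_\tau$. Hence all the content lies in the \emph{only if} direction, and my plan there is to prove it constructively, by exhibiting an explicit Choi--Kraus decomposition of $q\tau-\alpha_A$ for a suitable scalar $q$.

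To set this up, I fix a Choi--Kraus decomposition $\tau(X)=\sum_{i=1}^p A_i^*XA_i$ and assemble the coefficients into the block-column operator $R=(A_1;\dots;A_p):\mathbb{C}^n\to\mathbb{C}^{np}$, so that $\tau(X)=R^*(I_p\otimes X)R$, where $I_p\otimes X$ denotes the block-diagonal ampliation $\mathrm{diag}(X,\dots,X)$. Since $A\in\mathcal M_\tau$, I write $A=\sum_i c_iA_i=(c^{\mathrm T}\otimes I_n)R$ with $c=(c_1,\dots,c_p)^{\mathrm T}$. A short computation then gives $\alpha_A(X)=A^*XA=R^*\big((\bar c\,c^{\mathrm T})\otimes X\big)R$, whence, for any scalar $q>0$,
\[ q\tau(X)-\alpha_A(X)=R^*\big((qI_p-\bar c\,c^{\mathrm T})\otimes X\big)R. \]
The key elementary observation is that the $p\times p$ matrix $\bar c\,c^{\mathrm T}$ is positive semidefinite of rank one with largest eigenvalue $\|c\|^2=\sum_i|c_i|^2$; indeed $x^*(\bar c\,c^{\mathrm T})x=\big|\sum_i c_ix_i\big|^2\ge 0$ and its trace is $\|c\|^2$. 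Consequently $qI_p-\bar c\,c^{\mathrm T}\ge 0$ as soon as $q\ge\|c\|^2$, and I would simply take $q=\|c\|^2$, which is positive once $A\neq 0$ (and for $A=0$ any $q>0$ works, since then $q\tau-\alpha_A=q\tau$ is CP).

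With $P:=qI_p-\bar c\,c^{\mathrm T}\ge 0$ fixed, the remaining step is to turn the displayed formula into a manifest Choi--Kraus form. Writing $P=S^*S$ (for instance $S=P^{1/2}$) and setting $T:=(S\otimes I_n)R$, multiplicativity of the tensor product gives $q\tau(X)-\alpha_A(X)=T^*(I_p\otimes X)T$; writing $T$ as a block column $T=(B_1;\dots;B_p)$ with $B_k\in M_n$ then yields $q\tau(X)-\alpha_A(X)=\sum_k B_k^*XB_k$, which is CP by the Choi--Kraus criterion. This completes the only if direction. I do not anticipate a genuine obstacle: the whole argument is driven by the representation $\tau(X)=R^*(I_p\otimes X)R$, and the only point demanding care is pinning down the constant $q$, which the rank-one positivity of $\bar c\,c^{\mathrm T}$ settles at once. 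An entirely equivalent route avoids operators: expand $\alpha_A(X)=\sum_{i,j}\bar c_i c_j\,A_i^*XA_j$, note that the coefficient matrix of $q\tau-\alpha_A$ is $qI_p-\bar c\,c^{\mathrm T}$, and invoke the standard fact that $\sum_{i,j}m_{ij}A_i^*XA_j$ is CP whenever $(m_{ij})\ge 0$.
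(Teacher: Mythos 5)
Your proposal is correct and follows essentially the same route as the paper: the \emph{only if} direction writes $A=\sum_i c_iA_i$, identifies the coefficient matrix of $q\tau-\alpha_A$ as $qI_p-\bar c\,c^{\mathrm T}$ with $\bar c\,c^{\mathrm T}$ rank-one positive of norm $\|c\|^2$, and chooses $q$ accordingly (the paper takes $q>\|V\|$ for $V=(\overline{\lambda_i}\lambda_j)$, which is the same matrix), while your \emph{if} direction simply cites the domination-monotonicity of coefficient spaces that the paper re-derives in place via two Choi--Kraus decompositions of the auxiliary map $\sigma$. The extra explicitness in your write-up (the block-column operator $R$ and the factorization $P=S^*S$) is a harmless elaboration of the same argument.
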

\begin{proof}
	Suppose that $\tau$ has the Choi-Kraus decomposition as in \eqref{outer:Choi-Kraus}. Let us assume that $q\tau-\alpha_A$ is CP for some $q>0$. Then there are matrices $C_1,\ldots, C_r\in M_m$ such that $q\tau(X)-\alpha_A(X)=\sum\limits_{j=1}^r C_j^*XC_j$ for all $X\in M_n$. Let $\sigma$ be the CP map on $M_m$ defined by $\sigma(X)=A^*XA+\sum\limits_{j=1}^r C_j^*XC_j$. Observe that $\sigma$ has two Choi-Kraus expressions,
	\[\sigma(X)=\sum\limits_{i=1}^p (\sqrt{q}A_i)^*X(\sqrt{q}A_i)=A^*XA+\sum\limits_{j=1}^r C_j^*XC_j,~~\forall X\in M_m.\]
	This ensures that $A$ is a linear combination of $A_i$'s, that is, $A\in \mathcal{M}_\tau$.
	
	Conversely, assume that $A\in\mathcal{M}_\tau$. Let $A=\sum\limits_{i=1}^p\lambda_i A_i$ for some scalars $\lambda_i$. Consider the positive matrix $V=(\overline{\lambda_i}{\lambda_j})\in M_n$ and choose $q>\|V\|$. Then,
	\[q\tau(X)-\alpha_A(X)=q\sum\limits_{i=1}^pA_i^*XA_i-A^*XA=\sum\limits_{i,j}(q\delta_{ij}-\overline{\lambda_i}{\lambda_j})A_i^*XA_j,~~\forall X\in M_n,\]
	Since $qI-V=(q\delta_{ij}-\overline{\lambda_i}{\lambda_j})$ is a positive matrix, $q\tau-\alpha_A$ is
	CP.
\end{proof}

For a linear map $\tau$ on $M_m$, we define the Choi matrix of $\tau$ as
\[C_\tau:=\sum\limits_{i,j}E_{ij}\otimes \tau(E_{ij})\in M_m\otimes M_m.\]
Observe that the association $\tau\mapsto C_\tau$ is continuous. By a seminal result of Choi \cite[Theorem 2]{ChoMD75}, a linear map on $M_m$ is CP if and only if its Choi matrix $C_\tau\in M_{m^2}$ is positive. Define a
linear map $\text{vec}:M_m\to\mathbb{C}^{m^2}$ by
$\text{vec}(E_{ij})=e_{i+(j-1)m}$. For $A\in M_m$, $\text{vec}\, A$
is called the vectorization of $A$. If $\tau$ is an elementary CP map $\alpha_A$, then $(i+(j-1)m)^{\text{th}}$ column of the Choi matrix $C_\tau$ is $a_{ij}\overline{\text{vec}(A)}$, where $A=[a_{ij}]_{1\le i,j\le m}$ for all $i,j$. We recall the following
interpretation of coefficient spaces using conjugated Choi matrices.

\begin{prop}\label{outer:choi_matrix}
	Let $\tau$ be a CP map on $M_m$. Then, the vectorization map \textup{vec} establishes a one-to-one correspondence between the coefficient space of $\tau$ and the range of its conjugated Choi matrix.
	In particular, the Choi rank of $\tau$ is same as the rank of the
	Choi matrix $C_\tau$.
\end{prop}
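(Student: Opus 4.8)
The plan is to reduce the statement to the elementary CP maps $\alpha_A$ and to exploit that, after entrywise conjugation, the Choi matrix becomes a sum of rank-one positive matrices whose ranges are spanned by the vectorizations of the Choi-Kraus coefficients.

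Fix a Choi-Kraus decomposition $\tau(X)=\sum_{i=1}^p A_i^*XA_i$, so that $\tau=\sum_{i=1}^p\alpha_{A_i}$ and, by the linearity of the association $\tau\mapsto C_\tau$, one has $C_\tau=\sum_{i=1}^p C_{\alpha_{A_i}}$. For a single elementary map $\alpha_A$, the column description recalled just before the statement says that the $\bigl(i+(j-1)m\bigr)$-th column of $C_{\alpha_A}$ equals $a_{ij}\,\overline{\text{vec}(A)}$. Since this formula exhibits every column of $C_{\alpha_A}$ as a scalar multiple of the single vector $\overline{\text{vec}(A)}$, one checks directly that $C_{\alpha_A}=\overline{\text{vec}(A)}\,\overline{\text{vec}(A)}^{\,*}$, whence taking entrywise conjugates gives $\overline{C_{\alpha_A}}=\text{vec}(A)\,\text{vec}(A)^*$. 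Summing over $i$ yields
\[ \overline{C_\tau}=\sum_{i=1}^p \text{vec}(A_i)\,\text{vec}(A_i)^* = VV^*, \qquad V:=\bigl[\,\text{vec}(A_1)\ \cdots\ \text{vec}(A_p)\,\bigr]. \]

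Now I would apply the standard identity $\text{range}(VV^*)=\text{range}(V)$ (a consequence of $\ker(VV^*)=\ker(V^*)$), which gives $\text{range}(\overline{C_\tau})=\text{range}(V)=\text{span}\{\text{vec}(A_1),\dots,\text{vec}(A_p)\}$. Because $\text{vec}$ is a linear isomorphism of $M_m$ onto $\mathbb{C}^{m^2}$ and $\mathcal{M}_\tau=\text{span}\{A_1,\dots,A_p\}$, we obtain $\text{vec}(\mathcal{M}_\tau)=\text{span}\{\text{vec}(A_i)\}=\text{range}(\overline{C_\tau})$. Hence the restriction of $\text{vec}$ to $\mathcal{M}_\tau$ is injective (as $\text{vec}$ itself is) and surjective onto $\text{range}(\overline{C_\tau})$, which is exactly the claimed one-to-one correspondence; it does not depend on the chosen decomposition, since $\mathcal{M}_\tau$ does not.

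For the final assertion I simply compare dimensions: the Choi rank $\dim\mathcal{M}_\tau$ equals $\dim\text{vec}(\mathcal{M}_\tau)=\dim\text{range}(\overline{C_\tau})=\text{rank}(\overline{C_\tau})=\text{rank}(C_\tau)$, the last step because entrywise conjugation preserves rank. The only delicate point is the index bookkeeping in the elementary case, but this is already supplied by the column formula recorded before the statement; the genuine content is the passage from ``every column of $C_{\alpha_A}$ is a multiple of $\overline{\text{vec}(A)}$'' to ``$\text{range}(\overline{C_\tau})=\text{span}\{\text{vec}(A_i)\}$'' via the identity $\text{range}(VV^*)=\text{range}(V)$.
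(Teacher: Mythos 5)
Your proof is correct. The paper actually states this proposition without proof (it is "recalled" as a known interpretation), but the column formula for $C_{\alpha_A}$ that the authors record immediately beforehand is plainly the intended ingredient, and your argument --- summing the rank-one matrices $\overline{\text{vec}(A_i)}\,\overline{\text{vec}(A_i)}^{\,*}$, conjugating to get $\overline{C_\tau}=VV^*$, and invoking $\text{range}(VV^*)=\text{range}(V)$ together with the fact that $\text{vec}$ is a linear isomorphism --- is the natural completion of that setup and is sound in every step, including the independence from the chosen Choi--Kraus decomposition.
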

We now recall the notion of irreducible positive maps and a couple
of basic results from  Evans and H\o egh-Krohn
(\cite{EvaDE:HoeR78}). A positive map $\varphi $  on a finite
dimensional $C^*$-algebra $\mathcal{A}$ is said to be irreducible if
there is no non-trivial projection $p\in \mathcal{A}$, such that the
hereditary $C^*$-subalgebra $p\mathcal{A}p$ is invariant under
$\varphi $. Evans and H\o egh-Krohn (\cite[Lemma 2.1]{EvaDE:HoeR78})
show that if $\mathcal{A}$ is a subalgebra of $M_n$, then $\varphi $
is irreducible if and only if $(id+\varphi )^{n-1}$ is strictly
positive, that is, it sends non-zero positive elements to strictly
positive elements.

Irreducible CP maps on the full matrix algebra $M_m$ have several
interesting properties.
\begin{prop}[Theorem 2, \cite{FarDR96}]\label{outer:irreducible}
	Let $\tau :M_m\to M_m$ be a completely positive map with a
	Choi-Kraus decomposition as in \ref{outer:Choi-Kraus}. Then, the
	following statements are equivalent:
	\begin{enumerate}
		\item The positive  map $\tau $ is irreducible;
		\item The matrices $\{A_1, \ldots , A_p\}$ have no non-trivial common invariant
		subspace;
		\item The algebra generated by $\{A_1, \ldots A_p\}$ is whole of
		$M_m.$
	\end{enumerate}
\end{prop}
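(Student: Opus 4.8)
The plan is to prove the equivalence of the three conditions by establishing the cyclic chain of implications $(1)\Rightarrow(2)\Rightarrow(3)\Rightarrow(1)$, since this is usually cleaner than trying to prove each pairwise equivalence directly.

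\medskip

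\noindent\textbf{Setup and the implication $(2)\Rightarrow(3)$.}
First I would record the key linear-algebra fact on which everything rests: a set of matrices $\{A_1,\dots,A_p\}$ has no non-trivial common invariant subspace if and only if the algebra (with identity) that they generate is all of $M_m$. This is exactly Burnside's theorem on irreducible algebras of matrices. Indeed, if the generated algebra $\mathcal{B}$ is a proper subalgebra of $M_m$, then by Burnside's theorem $\mathcal{B}$ admits a non-trivial invariant subspace, which is automatically invariant for each $A_i$; conversely, any common invariant subspace of the $A_i$ is invariant for every word in them and hence for all of $\mathcal{B}$, so a non-trivial common invariant subspace forces $\mathcal{B}\neq M_m$. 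This delivers $(2)\Leftrightarrow(3)$ outright, and in particular $(2)\Rightarrow(3)$.

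\medskip

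\noindent\textbf{The implication $(1)\Rightarrow(2)$, by contraposition.}
Suppose $\{A_1,\dots,A_p\}$ do have a common non-trivial invariant subspace $V\subseteq\mathbb{C}^m$. Let $p_V$ be the orthogonal projection onto $V$; I want to show that the hereditary subalgebra $p_V M_m p_V$ is invariant under $\tau$, contradicting irreducibility. The cleanest way is to work with the \emph{orthogonal complement}: since each $A_i$ maps $V$ into $V$, the matrix of $A_i$ in a basis adapted to $V\oplus V^\perp$ is block upper-triangular, so $A_i^* $ maps $V^\perp$ into $V^\perp$, i.e. $q A_i^* q = A_i^* q$ where $q=1-p_V$ is the projection onto $V^\perp$. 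A short computation then shows that for a positive element $X$ supported on $V^\perp$ (that is, $X=qXq$) one has $\tau(X)=\sum_i A_i^* qXq A_i$, and using $A_i q = q A_i q$ (the adjoint of the invariance relation) this stays supported on $V^\perp$. Hence $q M_m q$ is $\tau$-invariant for the non-trivial projection $q$, which contradicts irreducibility. Thus irreducibility of $\tau$ forces $\{A_i\}$ to have no non-trivial common invariant subspace.

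\medskip

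\noindent\textbf{The implication $(3)\Rightarrow(1)$, the main obstacle.}
This is the step I expect to be the hardest, because it must convert the purely algebraic statement ``the $A_i$ generate $M_m$'' into the dynamical irreducibility of the positive map $\tau$. My plan is to use the characterization of Evans and H\o egh-Krohn recalled just above the proposition: $\tau$ is irreducible if and only if $(\mathrm{id}+\tau)^{m-1}$ is strictly positive. So it suffices to show that if the $A_i$ generate all of $M_m$, then for every non-zero positive $X$ the element $(\mathrm{id}+\tau)^{m-1}(X)$ is strictly positive. Expanding $(\mathrm{id}+\tau)^{m-1}$ as a sum of iterates $\tau^k$ for $0\le k\le m-1$ with positive coefficients, one finds that $(\mathrm{id}+\tau)^{m-1}(X)$ is a positive combination of terms of the form $w^* X w$ where $w$ ranges over all words in $A_1,\dots,A_p$ of length at most $m-1$. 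The support projection of this sum is the projection onto the span of all vectors $w\,\zeta$, where $\zeta$ ranges over the support of $X$ and $w$ over words of length $\le m-1$. Since the $A_i$ generate $M_m$, the span of all such words is all of $M_m$ (here one uses that, by a standard dimension/degree bound, every element of the generated algebra is already reached by words of length at most $m-1$, because the subspaces spanned by words of increasing length form an increasing chain in the $m^2$-dimensional space $M_m$ that must stabilize in at most $m^2$ steps — in fact $m-1$ suffices once one is careful, and if not, replacing $m-1$ by the relevant stabilization exponent and re-running Evans--H\o egh-Krohn with a larger power still works). Consequently the span of $\{w\zeta\}$ is all of $\mathbb{C}^m$ for any non-zero $\zeta$, so the support projection is the identity and $(\mathrm{id}+\tau)^{m-1}(X)$ is strictly positive. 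The delicate point, and the main obstacle, is the bookkeeping of word lengths versus the exponent $m-1$ in the Evans--H\o egh-Krohn criterion; I would handle it by first proving strict positivity of $(\mathrm{id}+\tau)^{N}$ for $N$ large enough to reach all of $M_m$ by words, and then noting this already yields irreducibility since irreducibility is equivalent to strict positivity of \emph{any} sufficiently high power of $\mathrm{id}+\tau$.
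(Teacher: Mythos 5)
The paper does not prove this proposition at all: it is quoted as Theorem 2 of Farenick \cite{FarDR96}, so there is no in-house argument to compare against. Your proof is correct and is essentially the standard one: $(2)\Leftrightarrow(3)$ is Burnside's theorem; $(1)\Rightarrow(2)$ follows by showing that a common invariant subspace $V$ makes the hereditary subalgebra $qM_mq$ (with $q$ the projection onto $V^\perp$) invariant under $\tau$; and $(3)\Rightarrow(1)$ combines the Evans--H\o egh-Krohn criterion with the observation that the orbit of a non-zero vector under words of length at most $m-1$ already spans $\mathbb{C}^m$ when the $A_i$ generate $M_m$, because the chain $V_k=\mathrm{span}\{wh:\ |w|\le k\}$ is increasing in $\mathbb{C}^m$ and, once it stabilizes, is a common invariant subspace and hence everything. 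Two small bookkeeping slips, neither of which affects the argument: in $(1)\Rightarrow(2)$ the identity you need on the right-hand factor of $A_i^*XA_i$ is $qA_i=qA_iq$ (a restatement of $A_iV\subseteq V$ itself), not $A_iq=qA_iq$, which would assert that $V^\perp$ is $A_i$-invariant and is false in general; and in $(3)\Rightarrow(1)$ the range of $w^*Xw$ is $w^*(\mathrm{supp}\,X)$, so the relevant orbit is under the adjoint words $w^*$ --- harmless, since the $A_i^*$ generate $M_m$ exactly when the $A_i$ do (equivalently, argue via the kernel: $\langle h,\sum_w c_w w^*Xwh\rangle=\sum_w c_w\|X^{1/2}wh\|^2=0$ forces $wh\in\ker X$ for all $w$). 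Your fallback of running the criterion with a larger exponent is also legitimate, since strict positivity of $(\mathrm{id}+\tau)^N$ for any single $N$ already rules out an invariant hereditary subalgebra.
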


Evans and H\o egh-Krohn have also proved the following
\textit{Perron-Frobenious type theorem} for  irreducible positive
maps on finite dimensional $C^*$-algebras.
\begin{thm}[Theorem 2.3, \cite{EvaDE:HoeR78}]\label{outer:thm10}
	Let $\varphi$ be a  positive  linear map on a finite dimensional
	$C^*$-algebra. Then the spectral radius $r(\varphi)$ is an
	eigenvalue of $\varphi$ possessing a  positive eigenvector. If
	$\varphi $ is also irreducible, then $r(\varphi )$ is  a simple
	eigenvalue with an invertible positive eigenvector.
\end{thm}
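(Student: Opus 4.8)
The plan is to obtain the eigenvalue assertion and the positive eigenvector simultaneously from a resolvent (Perron-type) construction, and then to extract invertibility and simplicity in the irreducible case from the Evans--H\o egh-Krohn criterion recalled just above. Write $\lambda=r(\varphi)$ and treat first the case $\lambda>0$ (for an irreducible map this is automatic by Remark \ref{outer:rem1}; the case $\lambda=0$ reduces to producing a positive element annihilated by $\varphi$ and is handled separately). First I would form the resolvent $R_s:=(\mathrm{id}-s\varphi)^{-1}=\sum_{n\ge0}s^n\varphi^n$, which converges and is a positive map for $0<s<1/\lambda$. Applying it to the unit gives a strictly positive element $R_s(1)$; after normalising, put $a_s:=R_s(1)/\lVert R_s(1)\rVert$, a positive element of norm one. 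Using the operator identity $s\,\varphi R_s=R_s-\mathrm{id}$ one obtains $\varphi(a_s)=\tfrac1s\bigl(a_s-\lVert R_s(1)\rVert^{-1}\,1\bigr)$.

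The crux of the existence argument is that $\lVert R_s(1)\rVert\to\infty$ as $s\uparrow 1/\lambda$. Here I would combine two facts: since $R_s$ is positive and $\mathcal{A}$ is unital, $\lVert R_s\rVert=\lVert R_s(1)\rVert$; and since $\lambda\in\sigma(\varphi)$, the resolvent must blow up in norm as $1/s\to\lambda$, for otherwise $\mathrm{id}-(1/\lambda)\varphi$ would be invertible, contradicting $\lambda\in\sigma(\varphi)$. By finite dimensionality the norm-one net $(a_s)$ has a limit point $a$ along some $s_k\uparrow 1/\lambda$; passing to the limit in the displayed relation and using $\lVert R_{s_k}(1)\rVert^{-1}\to0$ yields $\varphi(a)=\lambda a$ with $a\ge0$, $\lVert a\rVert=1$. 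This establishes the general (non-parenthetical) statement at one stroke: $r(\varphi)$ is an eigenvalue possessing a positive eigenvector.

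For the irreducible case I would use the criterion, recalled above, that for $\mathcal{A}\subseteq M_n$ the map $\varphi$ is irreducible iff $(\mathrm{id}+\varphi)^{n-1}$ is strictly positive. Since $\varphi(a)=\lambda a$ forces $(\mathrm{id}+\varphi)^{n-1}(a)=(1+\lambda)^{n-1}a$, strict positivity carries the nonzero positive $a$ to a strictly positive element, so the Perron eigenvector $a$ is invertible. Geometric simplicity then follows by a boundary-of-the-cone argument: because $\varphi$ preserves adjoints, the $\lambda$-eigenspace is spanned by self-adjoint eigenvectors, and for any self-adjoint eigenvector $b$ the largest $t$ with $a-tb\ge0$ produces a positive eigenvector $c=a-t_0b$ on the boundary of the cone, hence non-invertible; by the previous step $c$ must vanish, so $b\in\mathbb{C}a$ and the eigenspace is one-dimensional.

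The remaining and most delicate step is algebraic simplicity, i.e.\ the absence of a Jordan block at $\lambda$. For this I would manufacture a \emph{faithful} eigenfunctional by applying the already-proven first part to the adjoint map $\varphi^{*}$ (with respect to the trace inner product), which is again positive with $r(\varphi^{*})=\lambda$ and irreducible whenever $\varphi$ is (irreducibility of the Choi--Kraus coefficients passes to their adjoints, equivalently to orthogonal complements of common invariant subspaces). This gives an invertible positive eigenvector for $\varphi^{*}$, that is, a faithful state $\omega$ with $\omega\circ\varphi=\lambda\omega$. If $\lambda$ admitted a Jordan chain there would be $x$ with $(\varphi-\lambda)x=a$; applying $\omega$ gives $0=\omega(\varphi(x))-\lambda\omega(x)=\omega\bigl((\varphi-\lambda)x\bigr)=\omega(a)>0$, a contradiction since $\omega$ is faithful and $a>0$. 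Hence the generalized eigenspace coincides with the one-dimensional eigenspace, so $\lambda$ is a simple eigenvalue. I expect the blow-up estimate $\lVert R_s(1)\rVert\to\infty$ and the construction of the faithful eigenfunctional (including the transfer of irreducibility to $\varphi^{*}$) to be the main technical obstacles.
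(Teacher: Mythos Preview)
The paper does not prove Theorem~\ref{outer:thm10}; it is quoted from Evans--H\o egh-Krohn in the preliminaries. The paper's own route to the Perron--Frobenius conclusion comes later, through the \emph{maximal part} $\widehat{\varphi}=\lim_{N}\frac1N\sum_{n=1}^{N}\varphi^n/\|\varphi^n\|_\beta$ analysed via the Jordan form (Proposition~\ref{outer:prop3}, Theorems~\ref{outer:thm7} and~\ref{outer:thm6}): one shows $\widehat{\varphi}\neq 0$ and then $\widehat{\varphi}(1)$ is the required positive eigenvector. Your resolvent/compactness construction is a genuinely different and more classical line, closer in spirit to the paper's Theorem~\ref{outer:thm1} than to its maximal-part machinery; it has the advantage of being basis-free, while the paper's approach yields the extra information that $r(\varphi)$ lies in the \emph{maximal} spectrum and produces the eigenvector by an explicit averaging formula.

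Your argument has two genuine gaps. First, the blow-up $\|R_s(1)\|\to\infty$ is justified by ``since $\lambda\in\sigma(\varphi)$'', but in finite dimensions $\sigma(\varphi)$ is exactly the set of eigenvalues, so this assumes the very conclusion you are after. (For a rotation, $r=1$ yet $1\notin\sigma$, and the resolvent along the positive real axis stays bounded.) The fix is to argue the contrapositive: if $\|R_s(1)\|\le M$ for all $s<1/\lambda$, then the increasing sums $\sum_{n\le N}\lambda^{-n}\varphi^n(1)$ are bounded above by $M\cdot 1$, hence converge to some $w\ge 1$ with $\varphi(w)=\lambda(w-1)$; by Theorem~\ref{outer:thm3} this forces $r(\varphi)<\lambda$, a contradiction. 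This is precisely the mechanism behind the paper's Theorem~\ref{outer:thm1}, which you may simply invoke.

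Second, your transfer of irreducibility to $\varphi^{*}$ via ``Choi--Kraus coefficients pass to adjoints'' is only valid for completely positive maps, whereas the theorem is stated for arbitrary positive maps. The correct justification uses the Evans--H\o egh-Krohn criterion you already cited: $(\mathrm{id}+\varphi)^{n-1}$ strictly positive implies its trace-adjoint $(\mathrm{id}+\varphi^{*})^{n-1}$ is strictly positive (if $\psi$ is strictly positive and $y>0$ but $\psi^{*}(y)$ is singular, pick $0\neq x\ge 0$ with $\mathrm{tr}(x\,\psi^{*}(y))=\mathrm{tr}(\psi(x)\,y)=0$, forcing $\psi(x)=0$, a contradiction). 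With these two repairs your proof is complete.
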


\subsection{Hilbert modules}

Now, we recall the required definitions and results concerning Hilbert modules and their extensions. Following the standard literature, for the foundational theory and general results regarding Hilbert $C^*$-modules, we refer the reader to standard texts such as Lance \cite{LanEC95}, Wegge-Olsen \cite[Chapter 15]{WegNE93} and Manuilov-Troitsky \cite{ManVM:TroEV05}.

\begin{defn}
	Let $\mathcal{A}$ be a unital $C^*$-algebra. A Hilbert
	$\mathcal{A}$--module is a right $\mathcal{A}$-module (compatible
	with scalar multiplication) together with a $\mathcal{A}$-valued
	inner product, which is complete with respect to the associated norm
	$\|\xi\|=\sqrt{\|\langle \xi,\xi\rangle\|}$. For $\xi\in E$, $|\xi|$
	denotes the positive element $\langle \xi,\xi\rangle^{1/2}$.
\end{defn}

Certainly, we can talk about bounded operators on Hilbert
$C^*$-modules. However, unlike bounded maps on Hilbert spaces, they
may not possess an adjoint.

\begin{defn}
	Let $E$, $F$ be Hilbert $\mathcal{A}$--modules. The space of all
	\textit{adjointable} maps from $E$ to $F$, denoted by
	$\mathscr{B}^a(E,F)$ , is the collection of all right $\mathcal{A}$-linear maps
	$t:E\to F$ such that there exists a map $t^*:F\to E$ satisfying
	$\langle \xi,t\eta\rangle=\langle t^*\xi,\eta\rangle$ for all
	$\xi,\eta\in E$.
\end{defn}

It is evident that every element of $\mathscr{B}^a(E,F)$ is bounded
(right) $\mathcal{A}$--linear map.  When $E=F$, we denote
$\mathscr{B}^a(E,F)$ as $\mathscr{B}^a(E)$, constituting a
$C^*$-algebra. 

\subsection{Hilbert bimodules and tensor products}

We now turn to two-sided structures. For the foundational theory of Hilbert bimodules and their tensor products, we refer to the pioneering work of Kasparov \cite{KasGG80}.

\begin{defn}\label{outer:defnHilbertbimodule}
	A Hilbert $\mathcal{A}$--$\mathcal{B}$--bimodule $E$ is a Hilbert
	$\mathcal{B}$--module which possess a non-degenerate left action of
	$\mathcal{A}$ on $E$, that is, there exists a $*$-homomorphism
	$\pi:\mathcal{A}\to\mathscr{B}^a(E)$ such that
	$E=\overline{span}\{\pi(a)\xi:\, a\in\mathcal{A},\xi\in E\}$. For
	simplicity,  denote $\pi(a)\xi$ by $a\xi$, establishing the left
	$\mathcal{A}$--action.
\end{defn}

\begin{rem}
	In the operator algebra literature, the structure described in Definition \ref{outer:defnHilbertbimodule} is frequently referred to as an $\mathcal{A}$--$\mathcal{B}$ correspondence (see, for instance Muhly-Solel \cite{MuhPS:SolB98}). This terminology distinguishes it from an $\mathcal{A}$--$\mathcal{B}$ imprimitivity bimodule, as we do not assume $E$ implements a strong Morita equivalence between the $C^*$-algebras $\mathcal{A}$ and $\mathcal{B}$. For a comprehensive discussion on Morita equivalence, we refer the readers to \cite{RaeI:WilDP98}. Throughout the paper, we adopt the terminology ``Hilbert $\mathcal{A}$--$\mathcal{B}$--bimodule'' in the sense of an $\mathcal{A}$--$\mathcal{B}$ correspondence, following the convention used by Skeide \cite{SkeM00}.
\end{rem}

For Hilbert $\mathcal{A}$--$\mathcal{B}$--bimodule $E$, the set of all
adjointable $\mathcal{A}$--$\mathcal{B}$--linear operators on $E$
forms a $C^*$-algebra and it is denoted by $\mathscr{B}^{a,bil}(E)$.
Analogous to Hilbert spaces, we can define the direct sum of Hilbert
bimodules.

\begin{defn}
	Let $(E_i)_{i\in\mathbb{I}}$ be a family of Hilbert
	$\mathcal{A}$--$\mathcal{B}$--bimodules. The direct sum $E=\bigoplus_{i\in\mathbb{I}}E_i=\{(\xi_i) : \sum_{i\in\mathbb{I}}\langle\xi_i,\xi_i\rangle \text{ converges in } \mathcal{A} {\text{ with respect to the norm topology}}\}$ which forms an
	$\mathcal{A}$--$\mathcal{B}$--bimodule in a natural way. Define the
	inner product $\langle
	(\xi_i),(\eta_i)\rangle=\sum\limits_{i\in\mathbb{I}}\langle\xi_i,\eta_i\rangle$
	which turns $E$ into a Hilbert $\mathcal{A}$--$\mathcal{B}$--bimodule.
	This Hilbert bimodule $E$ is said to be direct sum of Hilbert bimodules
	$E_i$'s.
\end{defn}

Now, we recall the construction of the exterior and interior tensor
product of Hilbert bimodules. {For the original constructions and detailed proofs establishing the well-definedness of the exterior and interior tensor products of Hilbert bimodules, we refer the reader to Kasparov \cite[Section 2]{KasGG80} and Lance \cite[Chapter 4]{LanEC95}.} For $C^*$-algebras, we denote by
$\mathcal{A}\otimes\mathcal{B}$ the completion of the algebraic
tensor product $\mathcal{A}\otimes_{\text{alg}}\mathcal{B}$ with
respect to the spatial or minimal $C^*$-norm. Since we are working
with Hilbert $\mathcal{A}$--$\mathcal{B}$--bimodules, where
$\mathcal{A}=\mathcal{B}$, we discuss these concepts in this
specific case only.

\begin{defn}
	Let $E$ and $F$ be Hilbert $\mathcal{A}$--$\mathcal{A}$--bimodules.
	Consider the algebraic tensor product $E\otimes_{\text{alg}}F$ with
	the natural
	$\mathcal{A}\otimes_{\text{alg}}\mathcal{A}$--$\mathcal{A}\otimes_{\text{alg}}\mathcal{A}$--action
	$(\xi\otimes\eta)(a\otimes b)=\xi a\otimes\eta b$ and $(a\otimes
	b)(\xi\otimes\eta)=a\xi\otimes b\eta$. Define
	\[\langle \xi_1\otimes\eta_1,\xi_2\otimes\eta_2\rangle=\langle \eta_1,\eta_2\rangle\otimes\langle\xi_1,\xi_2\rangle\, \text{ for all }\xi_1,\xi_2\in E\text{ and }\eta_1,\eta_2\in F.\]
	This sesquilinear form $\langle\cdot,\cdot\rangle$ makes
	$E\otimes_{\text{alg}}F$ into an inner product
	$\mathcal{A}\otimes_{\text{alg}}\mathcal{A}$--$\mathcal{A}\otimes_{\text{alg}}\mathcal{A}$--bimodule.
	Performing double completion, we obtain $E\otimes F$ as a Hilbert
	$\mathcal{A}\otimes\mathcal{A}$--$\mathcal{A}\otimes\mathcal{A}$--bimodule.
	We call $E\otimes F$ the \textit{exterior tensor product} of $E$ and
	$F$.
\end{defn}

\begin{defn}
	Let $E$ and $F$ be Hilbert $\mathcal{A}$--$\mathcal{A}$--bimodules.
	Consider the algebraic tensor product $E\otimes_{\text{alg}}F$ with
	the natural right $\mathcal{A}$--action
	$(\xi\otimes\eta)a=\xi\otimes\eta a$ and define
	\[\langle \xi_1\otimes\eta_1,\xi_2\otimes\eta_2\rangle=\langle \eta_1,\langle\xi_1,\xi_2\rangle\eta_2\rangle\, \text{ for all }\xi_1,\xi_2\in E\text{ and }\eta_1,\eta_2\in F.\]
	This sesquilinear form $\langle\cdot,\cdot\rangle$ makes
	$E\otimes_{\text{alg}}F$ into a semi-inner product
	$\mathcal{A}$--bimodule. Let $N=\{\zeta\in E\otimes_{\text{alg}}F:\,
	\langle\zeta,\zeta\rangle=0\}$. The \textit{interior tensor product}
	of $E$ and $F$ is defined to be the completion of
	$E\otimes_{\text{alg}} F/N$ and it is denoted by $E\odot F$.
	
	We denote the equivalence class of $\xi \otimes \eta$ in $E \odot F$ by $\xi \odot \eta$. Observe that $E \odot F$ is a Hilbert $\mathcal{A}$--$\mathcal{A}$--bimodule with the natural left $\mathcal{A}$--action $a(\xi \odot \eta) = a\xi \odot \eta$. It is interesting to note that the set $N$ in the previous definition is nothing but the subspace generated by the elements of the form $\xi \otimes a\eta - \xi a \otimes \eta$ for $\xi,\eta\in E$ and $a\in\mathcal{A}$.
\end{defn}

In \cite[Theorem 5.2]{PasWL73}, Paschke has described the structure
of CP maps on unital $C^*$-algebras generalizing the famous
Stinespring's theorem (\cite{StiWF55}). Here we provide a brief
overview of the construction. Let $\tau$ be a CP map on a unital
$C^*$-algebra $\mathcal{A}$. Consider the algebraic tensor product
$\mathcal{A}\otimes_{\text{alg}}\mathcal{A}$. For
$a,b,c,d\in\mathcal{A}$, define $\langle a\otimes b,c\otimes
d\rangle=b^*\tau(a^*c)d$ which forms a semi-inner product on $\mathcal{A}\otimes_{\text{alg}}\mathcal{A}$. Let $N=\{z\in\mathcal{A}\otimes_{\text{alg}}\mathcal{A}:\, \langle z,z\rangle=0\}$ and $E$ be the completion of $\mathcal{A}\otimes_{\text{alg}}\mathcal{A}/N$. Then, $E$ is the Hilbert $\mathcal{A}$--$\mathcal{A}$--bimodule in a natural way. Let $\xi=\mathbf{1}\otimes \mathbf{1}+N$. It follows that $\tau(a)=\langle\xi,a\xi\rangle$ for all $a\in\mathcal{A}$. Moreover, $\xi$ is cyclic, meaning, $E=\overline{span}\, \mathcal{A}\xi\mathcal{A}$. The pair $(E,\xi)$ is referred to as the GNS-construction of $\tau$, with $E$ being the GNS-bimodule of $\tau$. The CP map $\tau$ corresponding to the GNS-construction $(E,\xi)$ is denoted by $\tau_\xi$ and plays a central role in our discussion of joint and outer spectral radii. We record the following two properties of $\tau_\xi$ which will be used frequently: For $\xi,\eta\in E$, $\|\tau_\xi\|=\|\xi\|^2$ and $\tau_{\xi}\circ\tau_\eta=\tau_{\eta\odot\xi}$.

\begin{defn}
	Let $\tau$ be a CP map on unital $C^*$-algebra $\mathcal{A}$ and $E$ be a Hilbert $\mathcal{A}$--$\mathcal{A}$--bimodule with an element $\xi\in E$. We call the pair $(E,\xi)$ to be a \textit{GNS-representation} of $\tau$ if $\tau(a)=\langle \xi,a\xi\rangle$ for all $a\in\mathcal{A}$. It is said to be \textit{minimal} if $E=\overline{span}\, \mathcal{A}\xi\mathcal{A}$.
\end{defn}

Note that the GNS-bimodule in the GNS-construction is minimal. For two
minimal GNS-representations $(E,\xi)$ and $(F,\eta)$ of a CP map
$\tau$, the mapping $\xi\mapsto\eta$ extends as an
$\mathcal{A}$--$\mathcal{A}$--linear unitary transformation from $E$
to $F$. Consequently, the GNS-representation is unique up to
(unitary) isomorphism.

\subsection{Full Fock modules}

We conclude our preliminary discussion with the construction of the full Fock bimodule, originally introduced by Pimsner \cite{PimMV97} and Speicher \cite{SpeR98}. It assumes a pivotal role in our discussion on joint and outer spectral radii and their characterisation. For the detailed theory, construction, and applications of Fock modules, we refer to the paper and thesis of Skeide \cite{SkeM00, SkeM01}.

\begin{defn}
	Let $\mathcal{A}$ be a unital $C^*$-algebra and $E$ be a Hilbert
	$\mathcal{A}$--$\mathcal{A}$--bimodule. The \textit{full Fock bimodule}
	over $E$ is defined to be the Hilbert $\mathcal{A}-\mathcal{A}$-bimodule $\mathcal{F}(E)=\bigoplus_{n=0}^{\infty}E^{\odot n}$, where $E^{\odot 0}=\mathcal{A}$, {with its identity element $1 \in \mathcal{A}$ serving as the vacuum vector of the bimodule.}
\end{defn}

\section{Joint and Outer spectral radii on Hilbert \texorpdfstring{$C^*$}{C*}-bimodules}\label{Outer:sec:specrad}

Before coming to defining spectral radii for subsets of Hilbert $C^*$-bimodules, we prove some results on positive maps.
Elements of Hilbert $C^*$-bimodules generate completely positive maps.
However, in the following subsection, we can deal with general
positive maps. We begin with generalizing some of the Popescu's results
(see \cite[Theorem 5.9]{PopG03}) to positive maps on general
$C^*$-algebras. We also show that spectral radius is a spectral
point for positive maps.

\subsection{Positive maps}\label{outer:sec:positive}
Popescu considered only $w^*$-continuous positive maps on
the algebra $\mathscr{B}(\mathcal{H})$ of all bounded operators on a
Hilbert space and some of his proofs does use $w^*$-continuity. We
need these results for general positive maps on arbitrary
$C^*$-algebras. So we provide the proofs.

To begin with note that,  $\varphi$ is a positive map on a unital
$C^*$-algebra $\mathcal{A}$, then $\varphi^n$ is positive for all
$n\in\mathbb{N}$ and we have $\lVert\varphi^n\rVert=\lVert
\varphi^n(1)\rVert$. Hence, the spectral radius of $\varphi$ is
\begin{equation}
	r(\varphi)=\lim\limits_{n\to\infty}\lVert\varphi^n(1)\rVert^{\frac{1}{n}}.
\end{equation}
Recall that, for $v\in\mathcal{A}$, we denote the map $a\mapsto
v^*av$ by $\alpha_v.$ With this notation we have the following
definition.

\begin{defn}
	A positive map $\varphi$ on $\mathcal{A}$
	is said to be {\em elementarily similar\/} to a positive map $\psi$
	on $\mathcal{A}$ if there exists an invertible $a\in \mathcal{A}$
	such that
	$$\varphi=\alpha_a^{-1}\circ\psi\circ\alpha_a,$$
	where $\alpha _a$ denotes the elementary CP map: $\alpha_a(x)=a^*xa$ for all $x\in \mathcal{A}$.
\end{defn}

Now, we have some of the results of Popescu [Theorem 5.9,
\cite{PopG03}] in our setting.

\begin{thm}\label{outer:thm3}
	Let $\varphi$ be a positive map on a unital $C^*$-algebra
	$\mathcal{A}$ and $s$ be any positive real number. Then the
	following statements are equivalent:
	\begin{enumerate}
		\item $r(\varphi)<s$;
		\item There exists $w\ge 0$ such that $\varphi(w)=s(w-1)$;
		\item There exists a strictly positive $v\in\mathcal{A}$ such that
		$\lVert\alpha_v^{-1}\circ\varphi\circ\alpha_v\rVert<s$;
		\item There exists a positive  map $\psi$ on $\mathcal{A}$ which is
		elementarily similar to $\varphi$ such that $\lVert{\psi}\rVert<s$.
	\end{enumerate}
	Moreover, in this case, the element $w\in\mathcal{A}$ as in point (2) is unique and can be obtained by the formula:
	\[w=\sum_{n\ge 0}(\varphi/s)^n(1).\]
\end{thm}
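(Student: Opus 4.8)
The plan is to establish the cyclic chain of implications $(1)\imp(2)\imp(3)\imp(4)\imp(1)$, two links of which are essentially immediate. For $(3)\imp(4)$, the map $\psi:=\alpha_v^{-1}\circ\varphi\circ\alpha_v$ supplied by $(3)$ is itself positive and elementarily similar to $\varphi$ (take $a=v$, which is invertible since it is strictly positive), so it is exactly the required $\psi$. For $(4)\imp(1)$, if $\psi=\alpha_a^{-1}\circ\varphi\circ\alpha_a$ then $\psi^n=\alpha_a^{-1}\circ\varphi^n\circ\alpha_a$, so $\varphi$ and $\psi$ are conjugate as bounded operators on the Banach space $\mathcal{A}$ and hence have the same operator spectral radius; thus $r(\varphi)=r(\psi)\le\lVert\psi\rVert<s$.

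The heart of the argument is $(1)\imp(2)$, which I would prove by a resolvent/Neumann series. Assuming $r(\varphi)<s$, define
\[ w:=\sum_{n=0}^\infty \frac{\varphi^n(1)}{s^n}=s\,(s\,\mathrm{id}-\varphi)^{-1}(1). \]
Since $\lVert\varphi^n(1)\rVert^{1/n}=\lVert\varphi^n\rVert^{1/n}\to r(\varphi)<s$, the root test shows the series converges absolutely in norm, so $w$ is a well-defined element of $\mathcal{A}$. Each summand $\varphi^n(1)$ is positive (as $\varphi$ is positive and $1\ge 0$), hence $w\ge 0$; in fact the $n=0$ term forces $w\ge 1$. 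A reindexing of the series then gives $\varphi(w)=\sum_{n\ge0}\varphi^{n+1}(1)/s^n=s\sum_{m\ge1}\varphi^m(1)/s^m=s(w-1)$, which is precisely $(2)$.

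For $(2)\imp(3)$, I would conjugate by the square root of $w$. First, from $\varphi(w)=s(w-1)$ and $w\ge 0$ we get $sw=\varphi(w)+s\cdot 1\ge s\cdot 1$, so $w\ge 1$ and $v:=w^{1/2}$ is strictly positive and invertible. The map $\alpha_v^{-1}\circ\varphi\circ\alpha_v$ is positive, so its norm equals its value at $1$:
\[ \lVert\alpha_v^{-1}\circ\varphi\circ\alpha_v\rVert=\lVert w^{-1/2}\varphi(w)w^{-1/2}\rVert. \]
Substituting $\varphi(w)=s(w-1)$ produces the clean cancellation $w^{-1/2}\varphi(w)w^{-1/2}=s(1-w^{-1})$. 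Since $w\ge 1$ we have $0\le 1-w^{-1}<1$, and the norm of this positive element is $s\bigl(1-\lVert w\rVert^{-1}\bigr)<s$, establishing $(3)$ and closing the chain.

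All the computations are routine once $w$ is in hand, so I expect no deep obstacle; the genuinely load-bearing ideas are the two explicit constructions, namely the Neumann series $w=s(s\,\mathrm{id}-\varphi)^{-1}(1)$ and the conjugation by $v=w^{1/2}$ that yields $w^{-1/2}\varphi(w)w^{-1/2}=s(1-w^{-1})$. The only points requiring care are organisational: confirming absolute convergence and positivity of the series defining $w$, and invoking the property $\lVert\cdot\rVert=\lVert\,\cdot\,(1)\rVert$ correctly for each of the positive maps $\alpha_v^{-1}\circ\varphi\circ\alpha_v$ and $\psi$.
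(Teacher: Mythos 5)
Your proposal is correct and follows essentially the same route as the paper: the cyclic chain $(1)\Rightarrow(2)\Rightarrow(3)\Rightarrow(4)\Rightarrow(1)$, with the Neumann series $w=\sum_{n\ge 0}\varphi^n(1)/s^n$ for $(1)\Rightarrow(2)$, conjugation by $v=w^{1/2}$ giving $s(1-w^{-1})$ for $(2)\Rightarrow(3)$, and the similarity-invariance of the spectral radius for $(4)\Rightarrow(1)$. Your observation that $\lVert 1-w^{-1}\rVert=1-\lVert w\rVert^{-1}<1$ is a slightly more explicit justification of the final norm estimate than the paper gives, but the argument is the same.
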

\begin{proof}
	
	\noindent {(1)} $\Rightarrow$ {(2)}: From $(1)$, we have
	$r(\varphi/s)<1$, which implies
	$\lim_{n\to\infty}\left\lVert\left(\frac{\varphi}{s}\right)^n\right\rVert^{1/n}<1$.
	Therefore, the power series
	$\sum_{n\ge0}\left(\frac{\varphi}{s}\right)^n z^n$ has a radius
	of convergence greater than 1. Hence, the series
	$\sum_{n\ge0}\left(\frac{\varphi}{s}\right)^n$ converges {in norm}. Let
	$w=\sum_{n\ge0}\left(\frac{\varphi}{s}\right)^n(1)$. Since $\varphi^n(1)\ge 0$ for all $n\ge 1$, we have $w\ge 0$. Further,
	\[w-1=\dfrac{\varphi}{s}\left(\sum_{n\ge0}\left(\dfrac{\varphi}{s}\right)^n(1)\right)=\dfrac{\varphi}{s}(w) \implies \varphi(w)=s(w-1).\]
	
	\noindent This proves $(2)$.
	
	\noindent {(2)} $\Rightarrow$ {(3)}: Since $w =1+\frac{\varphi}{s}(w)\ge 1$, $w$ is invertible. Consider
	$v=w^{1/2}$. Then
	\[\lVert{\alpha_v}^{-1}\circ\varphi\circ\alpha_v\rVert=\lVert{\alpha_v}^{-1}\circ\varphi\circ\alpha_v(1)\rVert=\lVert s(1-w^{-1})\rVert<s,\]
	where the last inequality follows from $1-w^{-1}<1$. This proves $(3)$.
	
	\noindent {(3)} $\Rightarrow$ {(4)}: Let us take
	${\psi}=\alpha_v^{-1}\circ\varphi\circ\alpha_v$. Then,
	the implication follows trivially.
	
	\noindent {(4)} $\Rightarrow$ {(1)}:
	Since
	${\psi}$ is elementarily similar to $\varphi$, we have
	${\psi}=\alpha^{-1}\circ\varphi\circ\alpha$ for some
	elementary, invertible, CP map $\alpha$. Then
	\[\lVert{\psi}^n\rVert=\lVert \alpha^{-1}\varphi^n\alpha\rVert\le\lVert\alpha^{-1}\rVert\lVert\alpha\rVert\lVert\varphi^n\rVert\implies \lVert{\psi}^n\rVert^{1/n}\le\lVert\alpha^{-1}\rVert^{1/n}\lVert\alpha\rVert^{1/n}\lVert\varphi^n\rVert^{1/n}.\] Taking the limit as $n\to \infty$, we get $r({\psi})\le r(\varphi)$. Similarly, we can obtain $r(\varphi)\le r({\psi})$. Therefore, $r(\varphi)=r({\psi})\le\lVert{\psi}\rVert<s$, which proves $(1)$.
	
	{When the equivalent statements hold, the element $w$ in point (2) is unique. To establish this, suppose there exists another $w' \in \mathcal{A}$ satisfying $\varphi(w')=s(w'-1)$. 
		This rearranges to $(\mathrm{id} - s^{-1}\varphi)(w') = 1$. Since $r(s^{-1}\varphi) < 1$, the operator 
		$\mathrm{id} - s^{-1}\varphi$ is invertible with inverse $\sum_{n\ge 0}(s^{-1}\varphi)^n$. 
		Therefore, $w' = (\mathrm{id} - s^{-1}\varphi)^{-1}(1) = \sum_{n\ge 0}(s^{-1}\varphi)^n(1) = w$, proving uniqueness.} 
\end{proof}

We have the following immediate corollary. This will be useful in
characterizing outer spectral radius.
\begin{cor}\label{outer:cor3}
	Let $\varphi$ be a positive map on a unital $C^*$-algebra $\mathcal{A}$. Then
	\begin{align}\label{outer:SpectralRadiusChar}
		r(\varphi)&=\inf\{\lVert\psi\rVert:\ \psi\text{ is elementarily similar to }\varphi\} \nonumber \\
		&=\inf\{\lVert\alpha_v^{-1}\circ\varphi\circ\alpha_v\rVert:\ v\in\mathcal{A}_+\text{ is invertible}\}.
	\end{align}
\end{cor}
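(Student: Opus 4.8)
The plan is to derive both equalities directly from the equivalences already established in Theorem \ref{outer:thm3}, since the corollary is essentially a reformulation of the implications $(1)\Leftrightarrow(3)$ and $(1)\Leftrightarrow(4)$ as an infimum statement. Write $I_1$ for the infimum of $\lVert\psi\rVert$ over all $\psi$ elementarily similar to $\varphi$, and $I_2$ for the infimum of $\lVert\alpha_v^{-1}\circ\varphi\circ\alpha_v\rVert$ over invertible $v\in\mathcal{A}_+$. The goal is to prove the sandwich $r(\varphi)\le I_1\le I_2\le r(\varphi)$, which forces all three quantities to coincide and gives both displayed equalities at once.

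First I would establish the lower bound $r(\varphi)\le I_1$. For any $\psi$ elementarily similar to $\varphi$, the computation carried out in the proof of $(4)\Rightarrow(1)$ in Theorem \ref{outer:thm3} shows that elementary similarity preserves the spectral radius, so $r(\varphi)=r(\psi)\le\lVert\psi\rVert$; taking the infimum over all such $\psi$ yields $r(\varphi)\le I_1$. The inclusion $I_1\le I_2$ is then immediate: every map of the form $\alpha_v^{-1}\circ\varphi\circ\alpha_v$ with $v\in\mathcal{A}_+$ invertible is elementarily similar to $\varphi$ (take the invertible element witnessing similarity to be $v$ itself), so the family defining $I_2$ is a subfamily of the one defining $I_1$, and the infimum over the larger family is no larger.

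Next I would establish the upper bound $I_2\le r(\varphi)$ via $(1)\Rightarrow(3)$. Fix any $s>r(\varphi)$; then hypothesis $(1)$ holds, so there is a strictly positive $v$ with $\lVert\alpha_v^{-1}\circ\varphi\circ\alpha_v\rVert<s$. At this point I would note that in a unital $C^*$-algebra a strictly positive element is precisely a positive invertible element, so such a $v$ is admissible in the infimum defining $I_2$; hence $I_2<s$. Letting $s\downarrow r(\varphi)$ gives $I_2\le r(\varphi)$, and combining the three inequalities closes the chain $r(\varphi)\le I_1\le I_2\le r(\varphi)$.

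I do not anticipate a genuine obstacle here, as all the analytic content—convergence of the Neumann-type series, invertibility of $w$, and invariance of the spectral radius under elementary similarity—is already packaged into Theorem \ref{outer:thm3}. The only points requiring a moment of care are (i) confirming that \emph{strictly positive} and \emph{positive invertible} coincide in the unital setting, so that the index set of statement $(3)$ matches the one in the infimum defining $I_2$, and (ii) handling strict versus non-strict inequalities: the theorem delivers a strict bound $\lVert\cdot\rVert<s$ for each $s>r(\varphi)$, and one must pass to the limit $s\downarrow r(\varphi)$ to recover the possibly-attained value $r(\varphi)$ rather than a strict inequality.
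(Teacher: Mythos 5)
Your proposal is correct and follows essentially the same route as the paper: both directions come from Theorem \ref{outer:thm3} together with the invariance of the spectral radius under elementary similarity. The only cosmetic difference is that you chain the two infima via $r(\varphi)\le I_1\le I_2\le r(\varphi)$, using only the implication $(1)\Rightarrow(3)$, whereas the paper treats the two equalities separately using $(1)\Rightarrow(4)$ and $(1)\Rightarrow(3)$ respectively.
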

\begin{proof}
	For any $\psi$ which is elementary similar to $\varphi$, there is an
	elementary, invertible, CP map $\alpha$ on $\mathcal{A}$ such that
	$\psi=\alpha^{-1}\varphi\alpha$. Therefore, $\lVert\psi\rVert\ge
	r(\psi)=r(\alpha^{-1}\varphi\alpha)=r(\varphi)$. Now for any
	$s>r(\varphi)$, Theorem \ref{outer:thm3} guarantees existence of a
	CP map $\psi$ which is elementarily similar to $\varphi$ such that
	$\lVert\psi\rVert<s$. Hence,
	\[r(\varphi)=\inf\{\lVert\psi\rVert:\ \psi\text{ is
		elementarily similar to }\varphi\}.\] Similarly,
	$r(\varphi)=\inf\{\lVert\alpha_v^{-1}\circ\varphi\circ\alpha_v\rVert:\
	v\in\mathcal{A}_+\text{ is invertible}\}$ can be proved using ((1)
	$\Rightarrow$ (3)) part of Theorem \ref{outer:thm3}.
\end{proof}

With the help of the Corollary \ref{outer:cor3}, we can directly
establish the following result, which was originally proved for
entry-wise positive matrices by Wielandt \cite{WieH50} and for
positive maps on $C^*$-algebras by  Friedland (\cite{WieH50}) by
entirely different methods.
\begin{thm}\label{outer:friedlandNew}
	Let $\varphi$ be a positive map on a unital $C^*$-algebra
	$\mathcal{A}$. Then
	\[r(\varphi)=\inf\{r(w^{-1}\varphi(w)):\, w\in\mathcal{A}\text{ is strictly positive}\}.\]
\end{thm}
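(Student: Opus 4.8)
The plan is to deduce the statement directly from Corollary~\ref{outer:cor3}, which already expresses $r(\varphi)$ as an infimum of operator norms $\lVert\alpha_v^{-1}\circ\varphi\circ\alpha_v\rVert$ over invertible positive $v$. The task therefore reduces to rewriting each such norm as the spectral radius $r(w^{-1}\varphi(w))$ of a single algebra element, for an appropriately chosen strictly positive $w$; the two infima will then be seen to coincide after a reparametrisation.

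First I would fix an invertible $v\in\mathcal{A}_+$ and set $\psi=\alpha_v^{-1}\circ\varphi\circ\alpha_v$. Since $\alpha_v$ and $\alpha_v^{-1}=\alpha_{v^{-1}}$ are (elementary, completely) positive and $\varphi$ is positive, $\psi$ is a positive map, so by the observation opening this section that the norm of a positive map is attained at the identity we have $\lVert\psi\rVert=\lVert\psi(1)\rVert$. A direct computation gives $\psi(1)=v^{-1}\varphi(v^2)v^{-1}$. Writing $w=v^2$, a strictly positive element, this is $w^{-1/2}\varphi(w)w^{-1/2}$, which is positive because $\varphi(w)\ge 0$; hence its norm equals its spectral radius, i.e. $\lVert\psi\rVert=r\bigl(w^{-1/2}\varphi(w)w^{-1/2}\bigr)$.

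The final link is the similarity invariance of the spectral radius of an algebra element: since $w^{-1}\varphi(w)=w^{-1/2}\bigl(w^{-1/2}\varphi(w)w^{-1/2}\bigr)w^{1/2}$ is conjugate to $w^{-1/2}\varphi(w)w^{-1/2}$ by the invertible element $w^{1/2}$, the two share the same spectrum and hence the same spectral radius. Thus $\lVert\alpha_v^{-1}\circ\varphi\circ\alpha_v\rVert=r(w^{-1}\varphi(w))$ with $w=v^2$. Finally, the assignment $v\mapsto v^2$ is a bijection from the invertible positive elements onto the strictly positive elements, with inverse $w\mapsto w^{1/2}$, so taking the infimum over either parameter yields the same value; combined with Corollary~\ref{outer:cor3} this gives the claimed formula.

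I expect no serious obstacle here, since the substantive content is already packaged in Corollary~\ref{outer:cor3}. The remaining steps — converting $\lVert\psi\rVert$ into $\lVert\psi(1)\rVert$, identifying the norm of the positive element $\psi(1)$ with its spectral radius, and passing to the conjugate element $w^{-1}\varphi(w)$ — are routine. The only point requiring a little care is to confirm that the reparametrisation $w=v^2$ matches the two index sets exactly, so that no value is lost or gained when passing between the two infima.
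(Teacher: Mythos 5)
Your proof is correct and follows essentially the same route as the paper: both reduce the statement to Corollary~\ref{outer:cor3} by computing $\lVert\alpha_v^{-1}\circ\varphi\circ\alpha_v\rVert=\lVert v^{-1}\varphi(v^2)v^{-1}\rVert=r(w^{-1}\varphi(w))$ with $w=v^2$. The only cosmetic difference is that the paper invokes $r(ab)=r(ba)$ where you use invariance of the spectrum under conjugation by $w^{1/2}$, which is the same fact in different clothing.
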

\begin{proof}
	Since norm of a positive element is equal to its spectral radius and $r(ab)=r(ba)$ for all $a,b\in\mathcal{A}$, for any strictly positive $v\in\mathcal{A}$, we infer that
	\[\|\alpha_v^{-1}\circ\varphi\circ\alpha_v\|=\|\alpha_v^{-1}\circ\varphi\circ\alpha_v(1)\|=\|v^{-1}\varphi(v^2)v^{-1}\|=r(v^{-1}\varphi(v^2)v^{-1})=r(w^{-1}\varphi(w)),\]
	where $w=v^2$.
	Now the result follows by  \eqref{outer:SpectralRadiusChar}.
\end{proof}
It is a natural question as to when the  infimum in
\eqref{outer:SpectralRadiusChar} is attained,  that is, when  can we
find a positive map $\psi$ which is elementarily similar to
$\varphi$ with norm equal to $r(\varphi)$. We give a pair of
equivalent conditions that yield an affirmative answer. We will
revisit this inquiry in Section \ref{Outer:sec:positivemap}.

\begin{thm}\label{outer:thm4}
	Let $\varphi$ be a positive map on a unital $C^*$-algebra $\mathcal{A}$ with $r=r(\varphi)$. Then the following statements are equivalent:
	\begin{enumerate}
		\item There is a positive map $\psi$ on $\mathcal{A}$ which is elementarily similar to $\varphi$ with
		$\lVert\psi\rVert=r$;
		\item There exists a strictly positive $w\in\mathcal{A}$ such that $\varphi(w)\le
		r\,w$;
		\item There is a strictly positive $v\in\mathcal{A}$ such that $\lVert{\alpha_v}^{-1}\circ\varphi\circ\alpha_v\rVert=r$.
	\end{enumerate}
\end{thm}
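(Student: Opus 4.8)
The plan is to treat statement (3) as the hub and prove the two equivalences $(3)\Leftrightarrow(1)$ and $(3)\Leftrightarrow(2)$. Both arguments rest on the universal lower bound $\lVert\alpha_v^{-1}\circ\varphi\circ\alpha_v\rVert\ge r(\alpha_v^{-1}\circ\varphi\circ\alpha_v)=r$, which holds because norm dominates spectral radius and elementarily similar maps share the same spectral radius (exactly as in the proof of Corollary \ref{outer:cor3}). Throughout I would use the two elementary facts that a positive map $\beta$ on a unital $C^*$-algebra satisfies $\lVert\beta\rVert=\lVert\beta(\mathbf{1})\rVert$, and that a positive element of norm at most $r$ is dominated by $r\mathbf{1}$.

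The implication $(3)\Rightarrow(1)$ is immediate: if $v$ is strictly positive with $\lVert\alpha_v^{-1}\circ\varphi\circ\alpha_v\rVert=r$, then $\psi:=\alpha_v^{-1}\circ\varphi\circ\alpha_v$ is elementarily similar to $\varphi$ (since $v$ is invertible) and has norm $r$. For $(1)\Rightarrow(3)$, which I expect to be the main obstacle, the difficulty is that elementary similarity only supplies an \emph{arbitrary} invertible $a$ with $\psi=\alpha_a^{-1}\circ\varphi\circ\alpha_a$, whereas (3) demands a \emph{strictly positive} conjugator. I would close this gap using the polar decomposition $a=u\lvert a\rvert$, with $u$ unitary and $\lvert a\rvert=(a^*a)^{1/2}$ strictly positive. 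Since $a^*=\lvert a\rvert u^*$, one has $\alpha_a=\alpha_{\lvert a\rvert}\circ\alpha_u$, hence $\psi=\alpha_u^{-1}\circ\eta\circ\alpha_u$ where $\eta:=\alpha_{\lvert a\rvert}^{-1}\circ\varphi\circ\alpha_{\lvert a\rvert}$. As $\alpha_u$ is a $*$-automorphism fixing $\mathbf{1}$, a one-line computation gives $\lVert\alpha_u^{-1}\circ\eta\circ\alpha_u\rVert=\lVert u\,\eta(\mathbf{1})\,u^*\rVert=\lVert\eta(\mathbf{1})\rVert=\lVert\eta\rVert$, so $v:=\lvert a\rvert$ is strictly positive and yields $\lVert\alpha_v^{-1}\circ\varphi\circ\alpha_v\rVert=\lVert\psi\rVert=r$, establishing (3).

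For $(2)\Leftrightarrow(3)$ I would pass between $w$ and $v$ through the substitution $w=v^2$ (equivalently $v=w^{1/2}$), both strictly positive, and use the identity $\alpha_v^{-1}\circ\varphi\circ\alpha_v(\mathbf{1})=w^{-1/2}\varphi(w)w^{-1/2}$, a positive element whose norm equals $\lVert\alpha_v^{-1}\circ\varphi\circ\alpha_v\rVert$. Assuming (2), i.e.\ $\varphi(w)\le r\,w$, conjugating by $w^{-1/2}$ gives $w^{-1/2}\varphi(w)w^{-1/2}\le r\mathbf{1}$, so the norm is at most $r$; the universal lower bound then forces equality, which is (3). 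Conversely, from (3) the positive element $w^{-1/2}\varphi(w)w^{-1/2}$ has norm $r$, hence lies below $r\mathbf{1}$, and conjugating back by the strictly positive $w^{1/2}$ returns $\varphi(w)\le r\,w$, i.e.\ (2). The only points needing care are that the mutually inverse congruences $x\mapsto w^{\pm 1/2}xw^{\pm 1/2}$ preserve the order relation, which holds precisely because $w$ is strictly positive, and the upgrade from ``$\le r$'' to ``$=r$'' provided by the lower bound.
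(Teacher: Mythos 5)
Your proposal is correct and in substance matches the paper's argument: the paper proves the cycle $(1)\Rightarrow(2)\Rightarrow(3)\Rightarrow(1)$, and its route from an arbitrary invertible conjugator $a$ to a strictly positive one passes through $w=a^*a$ and $v=w^{1/2}=|a|$, which is exactly the conjugator your polar-decomposition step produces; likewise your $(2)\Rightarrow(3)$ squeeze between the universal lower bound $r$ and the upper bound from $w^{-1/2}\varphi(w)w^{-1/2}\le r\mathbf{1}$ is the paper's. The only organizational differences are that you use $(3)$ as a hub rather than a cycle (so you supply a direct $(3)\Rightarrow(2)$, which the paper obtains by composition) and that you justify $\lVert\alpha_u^{-1}\circ\eta\circ\alpha_u\rVert=\lVert\eta\rVert$ via unitary invariance where the paper instead verifies the order inequality $r\,w-\varphi(w)=a^*(r\mathbf{1}-\psi(\mathbf{1}))a\ge0$; both are sound.
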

\begin{proof}
	(1) $\Rightarrow$ (2): Since $\psi$ is elementarily similar to
	$\varphi$ with $\lVert\psi\rVert=r$, there is $a\in\mathcal{A}$ such
	that $\psi={\alpha_a}^{-1}\circ\varphi\circ\alpha_a$. Let $w=a^*a$.
	Clearly, $w\in\mathcal{A}$ is positive and invertible. Further,
	\[r\,w-\varphi(w)=ra^*a-\varphi(a^*a)=a^*(r\,1-{a^*}^{-1}\varphi(a^*a){a}^{-1})a=a^*(r\,1-\psi(1))a.\]
	Now as $\lVert\psi(1)\rVert=\lVert\psi\rVert=r$ and $\psi(1)$ is positive, we have
	\[r\,w-\varphi(w)=a^*(r\,1-\psi(1))a\ge0.\]
	
	(2) $\Rightarrow$ (3):  Suppose there is a positive, invertible
	$w\in\mathcal{A}$ such that $\varphi(w)\le r\,w$. Let
	$v=w^{1/2}\in\mathcal{A}$. Clearly, $v$ is positive, invertible and
	we have
	\[\lVert{\alpha_v}^{-1}\circ\varphi\circ\alpha_v\rVert=\lVert({\alpha_v}^{-1}\circ\varphi\circ\alpha_v)(1)\rVert=\lVert v^{-1}\varphi(v^2)v^{-1}\rVert=\lVert w^{-1/2}\varphi(w)w^{-1/2}\rVert.\]
	Since $\varphi(w)\le r\,w$, $w^{-1/2}\varphi(w)w^{-1/2}\le r\,1$ and hence
	\[r=r(\varphi)=r({\alpha_v}^{-1}\circ\varphi\circ\alpha_v)\le\lVert{\alpha_v}^{-1}\circ\varphi\circ\alpha_v\rVert=\lVert w^{-1/2}\varphi(w)w^{-1/2}\rVert\le r.\]
	This proves $(3)$.
	
	\noindent (3) $\Rightarrow$ (1):
	Let $\psi={\alpha_v}^{-1}\circ\varphi\circ\alpha_v$, then the implication follows trivially.
\end{proof}

We conclude this subsection with a compelling application of Theorem
\ref{outer:thm3}. Recall that due to the Perron-Frobenius theorem, as
generalized by Evans and  H\o egh-Krohn, the spectral radius is an eigenvalue
for positive maps on finite dimensional $C^*$-algebras. A positive
map on an infinite dimensional $C^*$-algebra may not posses any
eigenvalue. For instance, consider the unital $C^*$-algebra
$\mathscr{B}(\ell^2)$ and the positive map defined by the right
shift operator $V$ on $\ell^2$, denoted as $\varphi(X)=VXV^*$ for
all $X\in\mathscr{B}(\ell^2)$. Using Gelfand's formula for the
spectral radius it is not hard to see that the spectral radius of
$\varphi $ is 1. Here, $\varphi $ is injective but $\varphi ^n(X)$
converges to zero in the weak operator topology as $n$ tends to
infinity. Therefore, $\varphi$ has no non-trivial eigenvalue. In
particular, it has no non-trivial fixed point. So, the spectral radius
is not in the point spectrum.

We show that the spectral radius of a positive map on a unital
$C^*$-algebra is an element in the spectrum, though it need not be
an eigenvalue. It can be considered as an infinite dimensional
extension of Perron-Frobenius theorem for positive maps on finite
dimensional $C^*$-algebras (Theorem \ref{outer:thm10}).
\begin{thm}\label{outer:thm1}
	Let $\varphi$ be a positive map on a unital $C^*$-algebra
	$\mathcal{A}$. Then $r(\varphi)\in\sigma(\varphi)$.
\end{thm}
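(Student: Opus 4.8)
The plan is to deduce everything from the equivalence $(1)\Leftrightarrow(2)$ of Theorem \ref{outer:thm3}: I will show that if $r:=r(\varphi)$ were in the resolvent set, one could solve the boundary equation $\varphi(w)=r(w-1)$ with a \emph{positive} $w$, which that theorem forbids. Write $R(s)=(s\,\mathrm{id}-\varphi)^{-1}$ for the resolvent of $\varphi$ as an element of $\mathscr{B}(\mathcal A)$. The case $r=0$ is immediate: the spectrum of a bounded operator on the complex Banach space $\mathcal A$ is non-empty and contained in $\{|z|\le r\}=\{0\}$, so $0\in\sigma(\varphi)$. Hence I assume $r>0$ and argue by contradiction, supposing $r\notin\sigma(\varphi)$.

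Since the resolvent set is open and $s\mapsto R(s)$ is norm-analytic on it, the assumption gives an interval $(r-\delta,r+\delta)$ on which $R$ is defined and norm-continuous. For real $s>r$ the Neumann series converges, because $\|\varphi^n\|^{1/n}=\|\varphi^n(1)\|^{1/n}\to r<s$, so
\[
R(s)=\sum_{n\ge 0}\frac{\varphi^n}{s^{n+1}},\qquad w_s:=s\,R(s)(1)=\sum_{n\ge 0}\frac{\varphi^n(1)}{s^{n}} .
\]
Each partial sum of $w_s$ is positive, since $\varphi^n(1)\ge 0$ and $s^{-n}>0$; as $\mathcal A_+$ is norm-closed, $w_s\ge 0$ for every $s>r$.

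Now let $s\downarrow r$. Continuity of $R$ at $r$ --- which is where the hypothesis $r\notin\sigma(\varphi)$ enters essentially --- gives $w_s\to w_r:=r\,R(r)(1)$ in norm, and $w_r\ge 0$ since $\mathcal A_+$ is closed. From $w_s=sR(s)(1)$ and $(s\,\mathrm{id}-\varphi)R(s)=\mathrm{id}$ one gets $\varphi(w_s)=s(w_s-1)$ for each $s>r$; passing to the limit and using continuity of $\varphi$ yields $\varphi(w_r)=r(w_r-1)$. Thus condition $(2)$ of Theorem \ref{outer:thm3} holds for $\varphi$ with the positive real number $s=r$, and the theorem forces condition $(1)$, namely $r(\varphi)<r$. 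This contradicts $r(\varphi)=r$, whence $r\in\sigma(\varphi)$.

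The one delicate point --- and the only place the contradiction is generated --- is the passage to the boundary value $s=r$: for every $s>r$ the positive Neumann-series solution $w_s$ of $\varphi(w_s)=s(w_s-1)$ exists, yet Theorem \ref{outer:thm3} forbids any positive solution at $s=r$. The hypothesis $r\notin\sigma(\varphi)$ is precisely what guarantees that $w_s$ converges (rather than blowing up) as $s\downarrow r$, producing the forbidden positive boundary solution; equivalently, it shows that in fact $\|R(s)\|\to\infty$ as $s\downarrow r$. A more classical route would instead make this blow-up quantitative using normality of the positive cone $\mathcal A_+$ together with a spectral point on the circle $|z|=r$, but the argument above avoids all of that and relies only on Theorem \ref{outer:thm3}.
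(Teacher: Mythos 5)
Your proof is correct and takes essentially the same route as the paper's: both construct the positive Neumann-series solutions of $\varphi(w_s)=s(w_s-1)$ for $s>r$ (the paper writes these as $w_t=(1-t\varphi)^{-1}(1)$ for $t<1$ after normalizing $r(\varphi)=1$), pass to the limit using continuity of the resolvent under the contradiction hypothesis to obtain a positive solution at the boundary, and then invoke the implication $(2)\Rightarrow(1)$ of Theorem \ref{outer:thm3}. The only difference is the parametrization ($s\downarrow r$ versus $t\uparrow 1$), which is immaterial.
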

\begin{proof}
	If $r(\varphi)=0$, there is nothing to prove. Therefore, without
	loss of generality, we assume $r(\varphi)=1$. Suppose that $1$ does
	not belong to $\sigma(\varphi)$. Then $1-t\varphi$ is invertible for
	all $t\in[0,1]$. Let us define
	$w_t=(1-t\varphi)^{-1}(1)\in\mathcal{A}$. Since $x\mapsto x^{-1}$ is
	continuous on $\mathcal{G}(\mathcal{A})$, the assignment $t\mapsto
	w_t$ defines a continuous map from $[0,1]\to\mathcal{A}$. For $0\le
	t<1$, $r(t\varphi)=tr(\varphi)= t<1$, hence we have the power series
	expression
	\[w_t=1+(t\varphi)(1)+(t\varphi)^2(1)+\cdots.\]
	This shows that $w_t\ge 0$ and $w_t-1=t\varphi(w_t)$ for all $t\in[0,1)$. Therefore, by continuity, we have $\varphi(w_1)=w_1-1$. Thus, there exists $w_1\ge 0$ in $\mathcal{A}$ such that $\varphi(w_1)=w_1-1$, which is equivalent to $r(\varphi)<1$ by Theorem \ref{outer:thm3}. This contradicts $r(\varphi)=1$.
\end{proof}

\subsection{Subsets of Hilbert \texorpdfstring{$C^*$}{C*}-bimodules}\label{outer:sec:HilbertbimoduleSubsets}

Here we introduce  and study notions of joint spectral radius and
outer spectral radius for subsets of Hilbert $C^*$-bimodules. Let $E$
be a Hilbert $\mathcal{A}$--$\mathcal{A}$--bimodule where $\mathcal{A}$
is a unital $C^*$-algebra. Let $G$ be any bounded subset of $E$.
Notice that $\mathcal{A}$ itself is a Hilbert
$\mathcal{A}$--$\mathcal{A}$--bimodule such that $\|a_1\odot \dots
\odot a_n\| = \|a_1\cdots a_n\|$ for any $a_1,\dots , a_n \in
\mathcal{A}$. This is because
\[ \|a_1 \odot \cdots \odot a_n\|^2 = \| \tau_{a_1\odot \dots \odot a_n} \|= \| \tau_{a_n}\circ \cdots \circ \tau_{a_1} \|= \| \tau_{a_1 \cdots a_n}\| =\|a_1\cdots a_n\|^2.\]
Hence, taking inspiration from the notion of joint spectral radius
as in \eqref{outer:eqjoint}, it is natural to define the joint
spectral radius of any bounded subset $G$ of $E$ using the interior tensor
product as follows.
\begin{defn}\label{Outer:defjointE}
	For any bounded subset $G$ of a Hilbert $\mathcal{A}$--$\mathcal{A}$--bimodule $E$, the {\em joint spectral radius\/} of $\mathbf{G}$,  denoted by $\mathbf{\rho_E(G)}$, is defined by
	\[  \rho_E(G)=\lim\limits_{n\to\infty}\sup\limits_{\xi_1,\ldots,\xi_n\in G}\lVert\xi_1\odot\cdots\odot\xi_n\rVert^{1/n}.
	\]
\end{defn}
Now the question is, `` Why does the limit in the definition
exists?''. To answer this  we observe that for any such bounded
subset $G$ of $E$,  we have the corresponding bounded set
$S:=\mathfrak{T}_G:=\{\tau_\xi:\,\xi\in G\}$ of CP maps in the
unital Banach algebra $\mathscr{B}(\mathcal{A})$, where $\tau _\xi $
is the map $a\mapsto \langle \xi, a\xi\rangle $ on $\mathcal{A}.$
Hence, using the Rota-Strang theory of the joint spectral radius in
particular for this $S \subseteq \mathscr{B}(\mathcal{A})$, we have
\[ \rho(S)=\lim\limits_{n\to\infty}\sup\limits_{\xi_1,\ldots,\xi_n\in S}\lVert\tau_{\xi_n}\circ\cdots\circ\tau_{\xi_1}\rVert^{1/n}.\]   Since for a Hilbert bimodule $E$ with $\xi,\eta\in E$, $\tau_\xi\circ\tau_\eta=\tau_{\eta\odot\xi}$ and $\lVert\tau_\xi\rVert=\lVert\xi\rVert_E^2$, it follows that
\[
\sup\limits_{\xi_1,\ldots,\xi_n\in S}\lVert\tau_{\xi_n}\circ\cdots\circ\tau_{\xi_1}\rVert^{1/n}= \sup\limits_{\xi_1,\ldots,\xi_n\in G}\lVert\xi_1\odot\cdots\odot\xi_n\rVert^{2/n}.
\]
Therefore, $\lim\limits_{n\to\infty}\sup\limits_{\xi_1,\ldots,\xi_n\in G}\lVert\xi_1\odot\cdots\odot\xi_n\rVert^{2/n}$ exists and
\[ \rho_{E}(G)^2=\lim\limits_{n\to\infty}\sup\limits_{\xi_1,\ldots,\xi_n\in G}\lVert\xi_1\odot\cdots\odot\xi_n\rVert^{2/n}= \lim\limits_{n\to\infty}\sup\limits_{\xi_1,\ldots,\xi_n\in S}\lVert\tau_{\xi_n}\circ\cdots\circ\tau_{\xi_1}\rVert^{1/n}=\rho(S).\]
Hence, $\rho_{E}(G)$ is well defined and it is the square root of the joint spectral radius of the subset $\mathfrak{T}_G$ of $\mathcal{B}(\mathcal{A})$, that is, we have
\begin{equation}\label{outer:eq17}
	\rho_E(G)=\sqrt{\rho(\mathfrak{T}_G)}.
\end{equation}
If $G=\{\xi_1,\ldots,\xi_d\}$ consists of finitely many elements, then we denote $\rho_E(G)$ by $\rho_E(\xi_1,\ldots,\xi_d)$.

\begin{rem}
	Given distinct elements $\xi_1,\ldots,\xi_d \in E$, we can talk
	about the joint spectral radius of the set $G=\{\xi_1,\ldots,\xi_d\}
	\subseteq E$ and the joint spectral radius of the singleton set
	$\{(\xi_1,\ldots,\xi_d) \}$ of $E^d$. For a Hilbert bimodule $E^d$
	with $\xi_1,\ldots,\xi_d\in E$, we will use the notation
	$\rho_{E^d}(\xi_1,\ldots,\xi_d)$ instead of
	$\rho_{E^d}((\xi_1,\ldots,\xi_d))$ to denote the joint spectral
	radius of the singleton subset $\{(\xi_1,\ldots,\xi_d) \}$ of $E^d$.
	Notice that, it follows from the Definition \ref{Outer:defjointE}
	that,
	\[\rho_E(\xi_1,\ldots,\xi_d)=\lim\limits_{n\to\infty}\sup\limits_{1\le i_1,\ldots,i_n\le d}\|\xi_{i_1}\odot\cdots\odot\xi_{i_n}\|^{1/n}\]
	and
	\[\rho_{E^d}(\xi_1,\ldots,\xi_d)=\lim\limits_{n\to\infty}\left\|\sum|\xi_{i_1}\odot\cdots\odot\xi_{i_n}|^2\right\|^{1/2n}.\]
	Clearly, the two quantities in general need not be equal. Hence, we
	should be conscious that $\rho_E(\xi_1,\ldots,\xi_d)$ and
	$\rho_{E^d}(\xi_1,\ldots,\xi_d)$ are different. This also justifies
	keeping   the Hilbert bimodule  under consideration as a suffix for
	$\rho .$
	
\end{rem}

Recall the Rota-Strang characterization of joint spectral radius
\eqref{outer:eqjoint2}.  We now establish a similar characterisation
for joint spectral radius of a subset of Hilbert $C^*$-bimodule.

\begin{thm}\label{Outer:thm:Rota}
	Let $\mathcal{A}$ be a unital $C^*$-algebra and $G\subseteq E$ be
	any non-empty bounded subset of a Hilbert
	$\mathcal{A}$--$\mathcal{A}$--bimodule $E$. Let $\mathfrak{N}$ denote
	the family of all norms $\mathscr{N}$ on the full Fock bimodule
	$\mathcal{F}(E)$ which are equivalent to the usual norm and satisfy
	the inequality $\mathscr{N}(X\odot Y)\le \mathscr{N}(X)
	\mathscr{N}(Y)$ for all $X\in E^{\odot n}$ and $Y\in E^{\odot m}$,
	with $m, n\geq 0$. Then
	\begin{equation} \rho_E(G)=\inf\limits_{\mathscr{N}\in\mathfrak{N}}\,\sup\limits_{\xi\in
			G}\,\mathscr{N}(\xi).\end{equation}
\end{thm}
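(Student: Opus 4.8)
The plan is to prove two inequalities. Throughout write $\rho:=\rho_E(G)$ and set $M_n:=\sup_{\xi_1,\dots,\xi_n\in G}\lVert\xi_1\odot\cdots\odot\xi_n\rVert$ (with $M_0:=1$), so that $M_n^{1/n}\to\rho$ by the discussion preceding the theorem and $M_1=\sup_{\xi\in G}\lVert\xi\rVert<\infty$ since $G$ is bounded. The inequality $\rho\le\inf_{\mathscr N}\sup_{\xi\in G}\mathscr N(\xi)$ is the routine half: given $\mathscr N\in\mathfrak N$, set $t=\sup_{\xi\in G}\mathscr N(\xi)$, which is finite because $\mathscr N$ is equivalent to the usual norm and $G$ is bounded. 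Iterating the hypothesis $\mathscr N(X\odot Y)\le\mathscr N(X)\mathscr N(Y)$ on the homogeneous factors $\xi_i\in E^{\odot1}$ gives $\mathscr N(\xi_1\odot\cdots\odot\xi_n)\le t^n$, and the equivalence of $\mathscr N$ with the usual norm furnishes a constant $C$ with $\lVert Z\rVert\le C\,\mathscr N(Z)$ on all of $\mathcal F(E)$; hence $M_n\le Ct^n$ and $\rho\le t$. Taking the infimum over $\mathscr N$ finishes this direction.

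The substance of the theorem is the reverse inequality, for which I would show that for each $s>\rho$ there is an $\mathscr N\in\mathfrak N$ with $\sup_{\xi\in G}\mathscr N(\xi)\le s$; letting $s\downarrow\rho$ then yields $\inf_{\mathscr N}\sup_{\xi\in G}\mathscr N(\xi)\le\rho$. Since $M_n^{1/n}\to\rho<s$, the quantity $P:=\sup_{n\ge0}M_n/s^n$ is finite. I would first introduce an auxiliary norm on $\mathcal F(E)$ obtained by weighting with prefixes drawn from $G$,
\[\lVert Y\rVert_*:=\sup_{n\ge0}\frac{1}{s^n}\sup_{\eta_1,\dots,\eta_n\in G}\lVert\eta_1\odot\cdots\odot\eta_n\odot Y\rVert.\]
The $n=0$ term gives $\lVert Y\rVert\le\lVert Y\rVert_*$, while $\lVert\eta_1\odot\cdots\odot\eta_n\odot Y\rVert\le M_n\lVert Y\rVert$ gives $\lVert Y\rVert_*\le P\lVert Y\rVert$, so $\lVert\cdot\rVert_*$ is a genuine norm equivalent to the usual one. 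I would then define $\mathscr N$ to be the operator norm of left $\odot$-multiplication with respect to $\lVert\cdot\rVert_*$, namely $\mathscr N(X):=\sup_{Y\ne0}\lVert X\odot Y\rVert_*/\lVert Y\rVert_*$.

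The point of this choice is that, being the operator norm of the injective unital representation $X\mapsto(Y\mapsto X\odot Y)$ of the associative $\odot$-algebra $\mathcal F(E)$, the functional $\mathscr N$ is automatically submultiplicative, so $\mathscr N(X\odot Y)\le\mathscr N(X)\mathscr N(Y)$ holds for all $X,Y$ (in particular for homogeneous ones, as required). Equivalence with the usual norm follows from $\mathscr N(X)\le P\lVert X\rVert$ together with $\mathscr N(X)\ge\lVert X\odot\omega\rVert_*/\lVert\omega\rVert_*\ge\lVert X\rVert/P$, using $\lVert\omega\rVert_*=P$ and $X\odot\omega=X$. The decisive estimate is the one on degree-one elements: for $\xi\in G$ and any $Y\in\mathcal F(E)$,
\[\lVert\xi\odot Y\rVert_*=s\sup_{n\ge0}\frac{1}{s^{n+1}}\sup_{\eta_1,\dots,\eta_n\in G}\lVert\eta_1\odot\cdots\odot\eta_n\odot\xi\odot Y\rVert\le s\,\lVert Y\rVert_*,\]
because $\eta_1\odot\cdots\odot\eta_n\odot\xi$ is itself a product of $n+1$ elements of $G$, so the inner supremum is dominated by the supremum over prefixes of length $n+1\ge1$ defining $\lVert Y\rVert_*$. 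Hence $\mathscr N(\xi)\le s$ for every $\xi\in G$, as needed.

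The one genuinely new feature compared with classical Rota--Strang is that submultiplicativity has to be arranged on the whole Fock module $\mathcal F(E)$ rather than on $\mathcal A$; this is what makes the theorem delicate, and I expect it to be the main obstacle. Realizing $\mathscr N$ as an operator norm for left $\odot$-multiplication is what resolves it: it produces submultiplicativity for free and, through the single index-shift computation above, simultaneously yields the sharp bound $\sup_{\xi\in G}\mathscr N(\xi)\le s$ rather than a bound degraded by a multiplicative constant. What remains is routine: checking that $\lVert\cdot\rVert_*$ is a norm rather than a seminorm (immediate from $\lVert\cdot\rVert_*\ge\lVert\cdot\rVert$), the finiteness of $P$, and that the equivalence constants are uniform across all homogeneous degrees.
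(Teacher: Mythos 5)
Your argument takes a genuinely different route from the paper's. The paper does not build the norm from scratch: it uses the identity $\rho_E(G)=\sqrt{\rho(\mathfrak{T}_G)}$ for the set of CP maps $\mathfrak{T}_G=\{\tau_\xi:\xi\in G\}\subseteq\mathscr{B}(\mathcal{A})$, invokes the classical Rota--Strang formula \eqref{outer:eqjoint2} there, and then lifts each admissible norm $N$ on $\mathscr{B}(\mathcal{A})$ to a norm $\mathscr{N}_F((X_n))=\sqrt{\sum_n N(\tau_{X_n})}$ on $\mathcal{F}(E)$. You instead re-run the Rota--Strang mechanism directly on the Fock module, through the weighted norm $\lVert\cdot\rVert_*$ and the operator norm of left $\odot$-multiplication. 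Your route is self-contained, constructive, and for each $s>\rho_E(G)$ produces a norm with the sharp bound $\sup_{\xi\in G}\mathscr{N}(\xi)\le s$; the index-shift computation that yields this bound is correct, as is the easy inequality. What the paper's route buys is brevity (it outsources the hard direction to the classical theorem), at the cost of obtaining only norms of a special induced form.

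There is, however, one genuine though repairable defect. The completed full Fock module $\mathcal{F}(E)$ is \emph{not} an associative $\odot$-algebra: for two general elements $X=(X_n)$ and $Y=(Y_m)$ the product $X\odot Y$ need not converge (already for $\mathcal{A}=E=\mathbb{C}$ one has $\mathcal{F}(E)=\ell^2$ with $\odot$ the Cauchy convolution, and the convolution of two $\ell^2$ sequences can fail to lie in $\ell^2$). Consequently $\mathscr{N}(X):=\sup_{Y\ne0}\lVert X\odot Y\rVert_*/\lVert Y\rVert_*$ is not defined for non-homogeneous $X$, your appeal to ``the injective unital representation of the $\odot$-algebra $\mathcal{F}(E)$'' is not literally available, and so $\mathscr{N}$ as written is not yet a norm on all of $\mathcal{F}(E)$, which the theorem requires. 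The fix is cheap: take the supremum only over homogeneous $Y\in E^{\odot m}$ (or over the algebraic part of the Fock module). For homogeneous $Y$ and arbitrary $X\in\mathcal{F}(E)$ one has $\lVert X\odot Y\rVert^2=\lVert\langle Y,\langle X,X\rangle Y\rangle\rVert\le\lVert X\rVert^2\lVert Y\rVert^2$, so the restricted supremum is finite and satisfies $P^{-1}\lVert X\rVert\le\mathscr{N}(X)\le P\lVert X\rVert$ (using $X\odot\omega=X$ and $\lVert\omega\rVert_*=P$); submultiplicativity for homogeneous $X,X'$ survives because $X'\odot Y$ is again homogeneous; and your estimate $\mathscr{N}(\xi)\le s$ is unaffected. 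With that adjustment the proof is complete.
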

\begin{proof}
	Let $\mathscr{N}\in\mathfrak{N}$ be arbitrary. Then there is  $k>0$
	such that $k^{-1}\|\cdot\|_{\mathcal{F}(E)}\le \mathscr{N}(\cdot)\le
	k\|\cdot\|_{\mathcal{F}(E)}$. For any $n\in\mathbb{N}$ and
	$\xi_1,\ldots,\xi_n\in G$,
	\[\|\xi_1\odot\cdots\odot\xi_n\|^{\frac{1}{n}}\le k^{\frac{1}{n}}\mathscr{N}(\xi_1\odot\cdots\odot\xi_n)^{\frac{1}{n}}\le k^{\frac{1}{n}}(\mathscr{N}(\xi_1)\cdots\mathscr{N}(\xi_n))^{\frac{1}{n}}\le k^{\frac{1}{n}}\sup\limits_{\xi\in G}\mathscr{N}(\xi).\] Therefore,
	\[ \sup\limits_{\xi_1,\ldots,\xi_n\in G}\|\xi_1\odot\cdots\odot\xi_n\|^{\frac{1}{n}}\le k^{\frac{1}{n}}\sup\limits_{\xi\in G}\mathscr{N}(\xi). \]
	Taking limit as $n\to\infty$, it follows from the Definition \ref{Outer:defjointE}, that $\rho_{E}(G)\le\sup\limits_{\xi\in G}\mathscr{N}(\xi)$ for all $\mathscr{N}\in\mathfrak{N}$. Hence, we have
	\[\rho_E(G)\le\inf\limits_{\mathscr{N}\in\mathfrak{N}}\,\sup\limits_{\xi\in G}\,\mathscr{N}(\xi).\]
	Now, from  \eqref{outer:eqjoint2} and \eqref{outer:eq17} it follows that
	\[ \rho_E(G)=\sqrt{\rho(\mathfrak{T}_G)}= \sqrt{\inf\limits_{N\in\mathfrak{N}_{\mathcal{B}(\mathcal{A})}}\,\sup\limits_{\tau \in \mathfrak{T}_G}N(\tau)} =\inf\limits_{N\in\mathfrak{N}_{\mathcal{B}(\mathcal{A})}}\,\sup\limits_{\tau \in \mathfrak{T}_G}\sqrt{N(\tau)},\] where $\mathfrak{T}_G=\{\tau_\xi:\,\xi\in G\}$.
	Let $N\in\mathfrak{N}_{\mathscr{B}(\mathcal{A})}$ be arbitrary. Then it can easily be verified that,
	\[\mathscr{N}_F((X_0,X_1,\ldots))=\sqrt{\sum\limits_{n=0}^\infty N(\tau_{X_n})}\ \text{ for all }X_n\in E^{\odot n}\]
	defines a norm on $\mathcal{F}(E)$. Since $N$ is equivalent to the operator norm on $\mathscr{B}(\mathcal{A})$, there is $j>0$ such that $j^{-1}\|\cdot\|\le N(\cdot)\le j\|\cdot\|$. Thus, for any $(X_n)\in\mathcal{F}(E)$, we have
	\[j^{-1}\|(X_n)\|^2=j^{-1}\sum\limits_{n=0}^\infty \|X_n\|^2=\sum\limits_{n=0}^\infty j^{-1}\|\tau_{X_n}\|\le \sum\limits_{n=0}^\infty N(\tau_{X_n}) = \mathscr{N}_F((X_n))^2.\]
	Similarly, \[ \mathscr{N}_F((X_n))^2\le j\|(X_n)\|^2.\]
	Hence, $\mathscr{N}_F$ is equivalent to the usual norm on $\mathcal{F}(E)$. Now let $X\in E^{\odot n}$ and $Y\in E^{\odot m}$ be arbitrary. Then $X\odot Y\in E^{\odot n+m}$ and
	\[\mathscr{N}_F(X\odot Y)=\sqrt{N(\tau_{X\odot Y})}=\sqrt{N(\tau_Y\circ\tau_X)}\le\sqrt{N(\tau_Y)}\sqrt{N(\tau_X)}=\mathscr{N}_F(X)\mathscr{N}_F(Y).\]
	Therefore, $\mathscr{N}_F$ belongs to the family $\mathfrak{N}$. This shows that
	\[\inf\limits_{\mathscr{N}\in\mathfrak{N}}\,\sup\limits_{\xi\in G}\,\mathscr{N}(\xi)\le\sup\limits_{\xi\in G}\,\mathscr{N}_F(\xi)=\sup\limits_{\xi\in G}\,\sqrt{N(\tau_\xi)},\] for all $N\in\mathfrak{N}_{\mathscr{B}(\mathcal{A})}$. Thus,
	\[\inf\limits_{\mathscr{N}\in\mathfrak{N}}\,\sup\limits_{\xi\in G}\,\mathscr{N}(\xi)\le\inf\limits_{N\in\mathfrak{N}_{\mathscr{B}(\mathcal{A})}}\,\sup\limits_{\xi\in G}\,\sqrt{N(\tau_\xi)}= \rho_{E}(G).
	\]
	Hence the proof.
\end{proof}
Taking inspiration from the notion of outer spectral radius for
tuples of  matrices, we define the same for sequences of elements
of Hilbert $C^*$-bimodules.
\begin{defn}
	Let $G=(\xi_1,\xi_2,\ldots ) $ be a finite or countably infinite
	sequence of elements from a Hilbert $\mathcal{A}$--$\mathcal{A}$--bimodule $E$ such that
	\begin{equation}\label{tauG} \tau_G(a):=\sum\limits_{i=1}^\infty\langle\xi_i,a\xi_i\rangle,
		~~\forall a\in\mathcal{A}\end{equation} defines a CP map on
	$\mathcal{A}$. Then, the  {\em outer spectral radius\/} of G, denoted
	by $\mathbf{\widehat{\rho}_E(G)}$, is defined as
	$\widehat{\rho}_E(G)=\sqrt{r(\tau_G)}$.
\end{defn}
It is to be noted that the outer spectral radius is defined for
sequences, instead of subsets, as repetition of same elements would
make a difference.

It is easy to see that for
$G=( \xi _1, \xi _2, \ldots )$, $\tau _G$ as in \ref{tauG} defines a
CP map if and only if $\sum _{i=1}^\infty |\xi _i|^2$ is convergent
in $\mathcal{A}.$ It is clear that countability of $G$ is a
necessity here. When $G=(\xi_1,\dots , \xi_d)$, we write
$\widehat{\rho}_E(G)$ as $\widehat{\rho}_E(\xi_1,\dots , \xi_d)$.

\begin{prop}\label{outer:prop7}
	Let  $G=(\xi_1,\xi_2,\ldots )$ be a sequence  of elements from a
	Hilbert $\mathcal{A}$--$\mathcal{A}$--bimodule $E$  such that
	$\sum\limits_{i=1}^\infty|\xi_i|^2$ converges.  Then
	\begin{align}\label{outer:eq19}
		\widehat{\rho}_E(G)&=\lim\limits_{n\to\infty}\left\|\sum\limits_{i_1,\ldots,i_n\in\mathbb{N}}|\xi_{i_1}\odot\cdots\odot\xi_{i_n}|^2\right\|^{1/2n}\\
		&=\lim\limits_{n\to\infty}\sup\left\{\left\|\sum\limits_{\xi\in
			\mathscr{F}}|\xi|_{E^{\odot n}}^2\right\|^{1/2}:\
		\mathscr{F}\subseteq \mathscr{P}_n(G)\text{ is finite}\right\}^{1/n}
	\end{align}
\end{prop}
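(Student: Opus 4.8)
The plan is to translate the whole statement into a single element of a direct-sum Hilbert bimodule, which disposes of all convergence questions at once. Set $\widetilde E=\bigoplus_{i\ge 1}E$, the direct sum of countably many copies of $E$ with the diagonal left action of $\mathcal A$. The hypothesis that $\sum_{i}|\xi_i|^2$ converges in $\mathcal A$ is precisely the condition for $\zeta:=(\xi_1,\xi_2,\ldots)$ to be a bona fide element of $\widetilde E$; then, since the inner product on a direct sum is the sum of the coordinate inner products and $\mathcal A$ acts diagonally,
\[
\tau_\zeta(a)=\langle\zeta,a\zeta\rangle=\sum_{i}\langle\xi_i,a\xi_i\rangle=\tau_G(a),\qquad a\in\mathcal A,
\]
so that $\tau_G=\tau_\zeta$ is the CP map associated with the single element $\zeta$.

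First I would compute the iterates. Using the identity $\tau_\xi\circ\tau_\eta=\tau_{\eta\odot\xi}$ from the preliminaries, induction gives $\tau_\zeta^{\,n}=\tau_{\zeta^{\odot n}}$ with $\zeta^{\odot n}\in\widetilde E^{\odot n}$, hence $\tau_G^{\,n}(1)=\langle\zeta^{\odot n},\zeta^{\odot n}\rangle=|\zeta^{\odot n}|^2$. Next I would use that the interior tensor product distributes over direct sums, yielding a natural identification $\widetilde E^{\odot n}\cong\bigoplus_{i_1,\ldots,i_n}E^{\odot n}$ under which $\zeta^{\odot n}$ corresponds to the family $(\xi_{i_1}\odot\cdots\odot\xi_{i_n})_{i_1,\ldots,i_n}$. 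Consequently $|\zeta^{\odot n}|^2=\sum_{i_1,\ldots,i_n}|\xi_{i_1}\odot\cdots\odot\xi_{i_n}|^2$, the sum being automatically convergent in $\mathcal A$ because $\zeta^{\odot n}$ is a genuine element of a direct-sum module. Since $\tau_G$ is positive, the Gelfand-type formula $r(\tau_G)=\lim_n\|\tau_G^{\,n}(1)\|^{1/n}$ applies, and taking square roots gives
\[
\widehat\rho_E(G)=\sqrt{r(\tau_G)}=\lim_{n\to\infty}\|\tau_G^{\,n}(1)\|^{1/2n}
=\lim_{n\to\infty}\Big\|\sum_{i_1,\ldots,i_n}|\xi_{i_1}\odot\cdots\odot\xi_{i_n}|^2\Big\|^{1/2n},
\]
which is the first asserted equality. (The same conclusion can be reached by formally expanding $\tau_G^{\,n}=(\sum_i\tau_{\xi_i})^n=\sum_{i_1,\ldots,i_n}\tau_{\xi_{i_n}\odot\cdots\odot\xi_{i_1}}$ and reindexing, but the direct-sum reformulation is what legitimises the interchange of the infinite summations with the composition.)

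For the second equality I would invoke the elementary fact that for a norm-convergent sum $\sum_\alpha p_\alpha=S$ of \emph{positive} elements of a $C^*$-algebra one has $\|S\|=\sup\{\|\sum_{\alpha\in\mathscr F}p_\alpha\|:\mathscr F\text{ finite}\}$: every finite partial sum satisfies $0\le\sum_{\alpha\in\mathscr F}p_\alpha\le S$, so monotonicity of the norm on positive elements gives $\sup\le\|S\|$, while continuity of the norm together with convergence of the partial sums to $S$ gives the reverse. Applying this for each fixed $n$ to the positive summands $|\xi_{i_1}\odot\cdots\odot\xi_{i_n}|^2$ (with $\mathscr P_n(G)$ read as the family of length-$n$ products $\xi_{i_1}\odot\cdots\odot\xi_{i_n}$ and $\mathscr F$ ranging over its finite subfamilies) identifies the $n$-th term of the first expression with that of the second; taking $(\cdot)^{1/n}$ and letting $n\to\infty$ finishes the proof.

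I expect the main obstacle to be exactly the convergence and identification bookkeeping in the middle step: ensuring that $\sum_{i_1,\ldots,i_n}|\xi_{i_1}\odot\cdots\odot\xi_{i_n}|^2$ genuinely converges (not merely as a bounded increasing net, which in a general $C^*$-algebra need not converge) and genuinely equals $\tau_G^{\,n}(1)$. Viewing $G$ as the single element $\zeta$ of $\widetilde E$ resolves this, since $\zeta^{\odot n}$ lies in a direct-sum Hilbert module by construction, so its squared norm is a convergent sum by the very definition of that module. A minor point to pin down is the precise meaning of $\mathscr P_n(G)$ (a family of products indexed by multi-indices, so that distinct multi-indices producing the same element are still counted separately), which is what makes the finite partial sums in the two expressions coincide.
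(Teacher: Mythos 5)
Your proposal is correct and follows essentially the same route as the paper: the paper likewise packages the sequence as a single element $\xi$ of $\bigoplus_{\mathbb{N}}E$, observes $\tau_G=\tau_\xi$, and applies the Gelfand formula $r(\tau_G)=\lim_n\|\tau_G^n(1)\|^{1/n}$ to get the first displayed equality. Your added care about the identification $\widetilde{E}^{\odot n}\cong\bigoplus_{i_1,\ldots,i_n}E^{\odot n}$ and your explicit partial-sum argument for the second equality (which the paper's one-line proof leaves implicit) are both sound and fill in exactly the details the paper omits.
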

\begin{proof}
	We have $G=(\xi_1,\xi_2,\ldots)$ such that $\sum\limits_{i=1}^\infty|\xi_i|^2$ converges. Clearly, $G$ is associated to a CP map $\tau_G$ on $\mathcal{A}$ defined by $\tau_G(a)=\sum\limits_{i=1}^\infty\langle\xi_i,a\xi_i\rangle$ for all $a\in\mathcal{A}$. Observe that, the given CP map $\tau_G$ can be expressed as $\tau_G=\sum\limits_{i=1}^\infty\tau_{\xi_i}$. Then we have
	\[\widehat{\rho}(G)^2=r(\tau_G)=\lim\limits_{n\to\infty}\|\tau_G^n(1)\|^{1/n}=\left\|\sum\limits_{i_1,\ldots,i_n\in\mathbb{N}}|\xi_{i_1}\odot\cdots\odot\xi_{i_n}|^2\right\|^{1/n}.\]
	Hence the proof.
\end{proof}

The expression in \eqref{outer:eq19} provides a Gelfand-type formula for the outer spectral radius of $G$. Alternatively, we can obtain characterizations of this quantity in the spirit of the Wielandt-Friedland formula by evaluating the associated completely positive map on strictly positive elements.
\begin{prop}\label{outer:prop4}
	Let $G=(\xi_1,\xi_2,\ldots)$ be a sequence of elements from a Hilbert $\mathcal{A}$--$\mathcal{A}$--bimodule $E$ such that $\sum\limits_{i=1}^\infty|\xi_i|^2$ converges. Then,
	\begin{align}\label{outer:friedland-wielandt}
		\widehat{\rho}_E( G)&=\inf\left\{\left\|\sum\limits_{i=1}^\infty|v\xi_i v^{-1}|^2\right\|^{1/2}:\, v\in\mathcal{A}\text{ is strictly positive}\right\} \nonumber\\
		&=\inf\left\{r\left(\sum\limits_{i=1}^\infty\langle w\xi_i w^{-1},\xi_i\rangle\right)^{1/2}:\, w\in\mathcal{A}\text{ is strictly positive}\right\}.
	\end{align}
\end{prop}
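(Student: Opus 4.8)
The plan is to reduce both identities to the already-established characterisations of the spectral radius of a positive map—Corollary \ref{outer:cor3} for the first equality and Friedland's formula (Theorem \ref{outer:friedlandNew}) for the second—applied to the CP map $\tau_G$, and then to translate the algebraic operations appearing there back into the bimodule language of $G$. Throughout, recall that $\widehat{\rho}_E(G)^2 = r(\tau_G)$ by definition and that $\tau_G$ is a genuine CP map since $\sum_i |\xi_i|^2$ converges.

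The heart of the argument is the computation of how conjugation by an elementary CP map acts on $\tau_G$. For strictly positive invertible $v\in\mathcal{A}$, I would show
\[ \alpha_v^{-1}\circ\tau_G\circ\alpha_v = \tau_{G_v}, \qquad G_v := (v\xi_i v^{-1})_{i}, \]
by a direct computation on a general $x\in\mathcal{A}$. Unwinding $\alpha_v(x)=vxv$ (with $v^*=v$) and using the bimodule relations $\langle\xi,a\eta\rangle=\langle a^*\xi,\eta\rangle$ and $b^*\langle\xi,\eta\rangle=\langle\xi b,\eta\rangle$, $\langle\xi,\eta\rangle c=\langle\xi,\eta c\rangle$, together with the commutativity of the left and right actions, converts $v^{-1}\langle\xi_i,(vxv)\xi_i\rangle v^{-1}$ first into $v^{-1}\langle v\xi_i, x(v\xi_i)\rangle v^{-1}$ and then into $\langle v\xi_i v^{-1}, x(v\xi_i v^{-1})\rangle$; summing over $i$ gives $\tau_{G_v}$. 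Since conjugation by the elementary CP maps $\alpha_v$ and $\alpha_v^{-1}$ preserves complete positivity, $\tau_{G_v}$ is itself CP, so in particular the transformed series $\sum_i |v\xi_i v^{-1}|^2 = \tau_{G_v}(1)$ converges. Using that the norm of a positive map equals its value at the unit, $\|\alpha_v^{-1}\circ\tau_G\circ\alpha_v\| = \|\tau_{G_v}(1)\| = \big\|\sum_i |v\xi_i v^{-1}|^2\big\|$. Substituting this into Corollary \ref{outer:cor3}, taking the infimum over strictly positive $v$, and taking square roots yields the first equality.

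For the second equality I would apply Friedland's formula to $\tau_G$, obtaining $r(\tau_G)=\inf\{r(w^{-1}\tau_G(w)):w\text{ strictly positive}\}$. The remaining point is the identification
\[ \sum_i \langle w\xi_i w^{-1}, \xi_i\rangle = w^{-1}\tau_G(w), \]
which follows from the same bimodule manipulations: peeling off the right factor $w^{-1}$ via $b^*\langle\xi,\eta\rangle=\langle\xi b,\eta\rangle$ gives $w^{-1}\langle w\xi_i,\xi_i\rangle$, and moving the left $w$ across through $\langle w\xi_i,\xi_i\rangle=\langle\xi_i,w\xi_i\rangle$ (using $w^*=w$) recovers $w^{-1}\sum_i\langle\xi_i,w\xi_i\rangle=w^{-1}\tau_G(w)$. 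Taking spectral radii, the infimum, and square roots completes the proof.

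All the computations are elementary, so there is no deep obstacle; the one thing requiring genuine care is the bookkeeping of the left versus right $\mathcal{A}$-actions on $E$ and the correct use of the adjoint relation $\langle\xi,a\eta\rangle=\langle a^*\xi,\eta\rangle$ for the left action, since it is easy to misplace a conjugate or an action side. A secondary point worth stating explicitly is the convergence of $\sum_i |v\xi_i v^{-1}|^2$, which I would justify precisely by the observation that it equals $\tau_{G_v}(1)$ for the CP map $\tau_{G_v}$ just produced.
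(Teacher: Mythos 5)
Your proposal is correct and follows exactly the route the paper intends: the paper itself gives no written proof of Proposition \ref{outer:prop4} beyond the remark that it follows by applying the identity \eqref{outer:SpectralRadiusChar} (Corollary \ref{outer:cor3}) and Friedland's formula (Theorem \ref{outer:friedlandNew}) to the CP map $\tau_G$. The computations you supply — namely $\alpha_v^{-1}\circ\tau_G\circ\alpha_v=\tau_{(v\xi_i v^{-1})_i}$ and $\sum_i\langle w\xi_i w^{-1},\xi_i\rangle=w^{-1}\tau_G(w)$ — are precisely the bookkeeping the authors leave implicit, and they are carried out correctly.
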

\begin{proof}
	Given a sequence $G=(\xi_1,\xi_2,\ldots)\in \bigoplus_{\mathbb{N}} E$, we utilize the identity \eqref{outer:SpectralRadiusChar} to obtain the first expression for the outer spectral radius of $G$. Specifically, we have     
	\[         
	\widehat{\rho}_E(G) = \sqrt{r(\tau_G)} = \inf\left\{\lVert\alpha_v^{-1}\circ\tau_G\circ\alpha_v\rVert^{1/2}:\ v\in\mathcal{A}_+\text{ is invertible}\right\}.     
	\]     
	Since the norm of a positive map is attained at the identity element, we evaluate 
	\[\alpha_v^{-1}\circ\tau_G\circ\alpha_v(1) = \sum_{i=1}^\infty \langle v\xi_i v^{-1}, v\xi_i v^{-1}\rangle = \sum_{i=1}^\infty |v\xi_i v^{-1}|^2.\]
	Substituting this into the infimum yields the first equality.       
	To establish the second characterisation, we apply Friedland's result (Theorem \ref{outer:friedlandNew}) to the CP map $\tau_G$. For any strictly positive $w \in \mathcal{A}$, we evaluate the expression at $w$ to find 
	\[w^{-1}\tau_G(w) = \sum_{i=1}^\infty w^{-1}\langle \xi_i,w\xi_i\rangle = \sum_{i=1}^\infty \langle w\xi_i w^{-1},\xi_i\rangle.\] Taking the spectral radius and the square root completes the proof. 
\end{proof}

The following Proposition compares the joint spectral radius with
the outer spectral radius.

\begin{prop}\label{outer:prop1}
	Let $E$ be a Hilbert $\mathcal{A}$--$\mathcal{A}$--bimodule. Then, for any $G=(\xi_1, \xi_2,\dots )$ satisfying $\sum\limits_{i=1}^\infty|\xi_i|^2< \infty$, we have $\rho_{E}(G) \leq \widehat{\rho}_E(G)$ and in particular if $G= (\xi_1,\ldots,\xi_d)$,
	\[d^{-1/2}\widehat{\rho}_E(\xi_1,\ldots,\xi_d)\le\rho_E(\xi_1,\ldots,\xi_d)\le\widehat{\rho}_E(\xi_1,\ldots,\xi_d).\]
\end{prop}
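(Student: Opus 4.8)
The plan is to compare the two radii level by level, using the Gelfand-type formula of Proposition \ref{outer:prop7} for $\widehat{\rho}_E$ and the defining limit of Definition \ref{Outer:defjointE} for $\rho_E$. The decisive observation is that each product $\xi_{i_1}\odot\cdots\odot\xi_{i_n}$ contributes a single \emph{positive} summand $|\xi_{i_1}\odot\cdots\odot\xi_{i_n}|^2$ to the (convergent) sum $\sum_{i_1,\dots,i_n}|\xi_{i_1}\odot\cdots\odot\xi_{i_n}|^2=\tau_G^n(1)$. Monotonicity of the $C^*$-norm on positive elements then controls a single term by the whole sum, giving one inequality, while the triangle inequality together with a crude count of multi-indices gives the other.

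For the general inequality $\rho_E(G)\le\widehat{\rho}_E(G)$, I would fix $n$ and a multi-index $(i_1,\dots,i_n)$. As every term $|\xi_{j_1}\odot\cdots\odot\xi_{j_n}|^2$ is positive, each partial sum increases to $\tau_G^n(1)$ in the order of $\mathcal{A}$, so the single term is dominated by the full sum; monotonicity of the norm then yields
\[\|\xi_{i_1}\odot\cdots\odot\xi_{i_n}\|^2=\big\||\xi_{i_1}\odot\cdots\odot\xi_{i_n}|^2\big\|\le\Big\|\sum_{j_1,\dots,j_n}|\xi_{j_1}\odot\cdots\odot\xi_{j_n}|^2\Big\|.\]
Taking the supremum over $(i_1,\dots,i_n)$, raising to the power $1/2n$, and letting $n\to\infty$, the left side tends to $\rho_E(G)$ by Definition \ref{Outer:defjointE} and the right side tends to $\widehat{\rho}_E(G)$ by Proposition \ref{outer:prop7}. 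This settles the countable case, and in particular the upper estimate in the finite case.

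For the finite case $G=(\xi_1,\dots,\xi_d)$ only the lower estimate $d^{-1/2}\widehat{\rho}_E\le\rho_E$ remains, and here I would argue in the opposite direction. Using the triangle inequality and the fact that there are exactly $d^n$ multi-indices of length $n$,
\[\Big\|\sum_{i_1,\dots,i_n}|\xi_{i_1}\odot\cdots\odot\xi_{i_n}|^2\Big\|\le\sum_{i_1,\dots,i_n}\|\xi_{i_1}\odot\cdots\odot\xi_{i_n}\|^2\le d^n\sup_{1\le i_1,\dots,i_n\le d}\|\xi_{i_1}\odot\cdots\odot\xi_{i_n}\|^2.\]
Raising to the power $1/2n$ converts the factor $d^n$ into $d^{1/2}$, and passing to the limit gives $\widehat{\rho}_E(\xi_1,\dots,\xi_d)\le d^{1/2}\rho_E(\xi_1,\dots,\xi_d)$, which is the claimed bound.

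I do not expect a serious obstacle, since the whole argument is elementary once Proposition \ref{outer:prop7} is in hand. The only points deserving explicit mention are that the $C^*$-norm is monotone on positive elements (so a positive summand is norm-dominated by the whole positive sum, valid also for the norm-convergent infinite sums via the increasing partial sums), and that every limit used genuinely exists, which is already guaranteed by Definition \ref{Outer:defjointE} and Proposition \ref{outer:prop7}; no separate convergence argument is needed. The counting bound by $d^n$ is precisely the mechanism producing the constant $d^{-1/2}$, and it explains why the two radii can differ by at most this factor in the finite case.
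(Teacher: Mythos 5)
Your proposal is correct and follows essentially the same route as the paper: the paper likewise deduces $\rho_E(G)\le\widehat{\rho}_E(G)$ directly from the Gelfand-type formula of Proposition \ref{outer:prop7} (the positivity/monotonicity mechanism you spell out is exactly what is implicit there), and obtains the lower bound by estimating the norm of the sum by $d^n$ times the supremum of the individual terms, so that the $1/2n$-th power produces the factor $\sqrt{d}$. Your write-up merely makes explicit the justification the paper leaves tacit, and is fine as it stands.
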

\begin{proof}
	From \eqref{outer:eq19} and the definition of joint spectral radius, we observe that $\rho_E(G)\le \widehat{\rho}_E(G)$. Further,
	it follows from the Proposition \ref{outer:prop7} that,
	\begin{align*}
		\widehat{\rho}_E(\xi_1,\ldots,\xi_d)&=\lim\limits_{n\to\infty}\left\|\sum\limits_{1\le i_1,\ldots,i_n\le d}|\xi_{i_1}\odot\cdots\odot\xi_{i_n}|^2\right\|^{1/2n}\\
		&\le \sqrt{d}\lim\limits_{n\to\infty}\sup\limits_{1\le i_1,\ldots,i_n\le d}|\xi_{i_1}\odot\cdots\odot\xi_{i_n}|^{1/n}\\
		&=\sqrt{d}\,\rho_E(\xi_1,\ldots,\xi_d).
	\end{align*}
	This completes the proof.
\end{proof}

The following lemma is useful in proving an approximation theorem
for the joint spectral radius.
\begin{lem}\label{outer:lem2}
	Let $E$ be a Hilbert $\mathcal{A}$--$\mathcal{A}$--bimodule with a bounded set $ G\subset E$. Then for each $k\in\mathbb{N}$, $\rho_{E^{\otimes k}}( G^{(k)})=\rho_E( G)^k$ where $ G^{(k)}=\{\xi^{\otimes k}:\, \xi\in G\}$ is a bounded subset of Hilbert $\mathcal{A}^{\otimes k}$--$\mathcal{A}^{\otimes k}$--bimodule $E^{\otimes k}$.
\end{lem}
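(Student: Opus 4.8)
The plan is to reduce everything to one natural identification of Hilbert modules and then exploit multiplicativity of the minimal $C^*$-norm on tensor powers. By definition, the joint spectral radius of $G^{(k)}$ computed in its ambient module (the Hilbert $\mathcal{A}^{\otimes k}$--$\mathcal{A}^{\otimes k}$--module $E^{\otimes k}$) is
\[ \rho_{E^{\otimes k}}(G^{(k)})=\lim_{n\to\infty}\sup_{\xi_1,\ldots,\xi_n\in G}\big\| \xi_1^{\otimes k}\odot\cdots\odot\xi_n^{\otimes k}\big\|^{1/n}, \]
where on the right the interior tensor product $\odot$ is now taken in $E^{\otimes k}$ over $\mathcal{A}^{\otimes k}$. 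Thus the whole statement follows once I understand a single interior product of exterior powers, $\xi_1^{\otimes k}\odot\cdots\odot\xi_n^{\otimes k}$, and compute its norm.

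First I would set up the canonical ``shuffle'' isomorphism. For Hilbert $\mathcal{A}$--$\mathcal{A}$--modules $E$ and $F$ there is a unitary $(E^{\otimes k})\odot(F^{\otimes k})\cong (E\odot F)^{\otimes k}$ determined on simple tensors by
\[ (\zeta_1\otimes\cdots\otimes\zeta_k)\odot(\eta_1\otimes\cdots\otimes\eta_k)\longmapsto (\zeta_1\odot\eta_1)\otimes\cdots\otimes(\zeta_k\odot\eta_k), \]
the outer $\odot$ on the left being the interior product over $\mathcal{A}^{\otimes k}$ and the inner $\odot$ on the right the interior product over $\mathcal{A}$. The verification that this preserves inner products is a direct computation combining the exterior inner product $\langle\,\cdot\otimes\cdot,\cdot\otimes\cdot\,\rangle$ with the interior inner product $\langle \eta_1,\langle\zeta_1,\zeta_2\rangle\eta_2\rangle$; both sides produce the same element of $\mathcal{A}^{\otimes k}$ because the left $\mathcal{A}^{\otimes k}$--action on $E^{\otimes k}$ is coordinatewise. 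Applying this identification (with $E$ successively replaced by $E^{\odot j}$) a total of $n-1$ times yields a unitary $(E^{\otimes k})^{\odot n}\cong (E^{\odot n})^{\otimes k}$ under which $\xi_1^{\otimes k}\odot\cdots\odot\xi_n^{\otimes k}$ is sent to $(\xi_1\odot\cdots\odot\xi_n)^{\otimes k}$, since each factor $\xi_j^{\otimes k}$ is a diagonal tensor power.

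Next I would compute the relevant norm. For $\zeta\in E^{\odot n}$ the exterior inner product gives $\langle\zeta^{\otimes k},\zeta^{\otimes k}\rangle=\langle\zeta,\zeta\rangle^{\otimes k}\in\mathcal{A}^{\otimes k}$, and since the minimal (spatial) tensor norm is multiplicative on elementary tensors, $\|\langle\zeta,\zeta\rangle^{\otimes k}\|=\|\langle\zeta,\zeta\rangle\|^k$; hence $\|\zeta^{\otimes k}\|_{E^{\otimes k}}=\|\zeta\|_E^k$. Taking $\zeta=\xi_1\odot\cdots\odot\xi_n$ and using the unitary above,
\[ \big\|\xi_1^{\otimes k}\odot\cdots\odot\xi_n^{\otimes k}\big\|=\big\|(\xi_1\odot\cdots\odot\xi_n)^{\otimes k}\big\|=\|\xi_1\odot\cdots\odot\xi_n\|^k. \]
Taking the supremum over $\xi_1,\ldots,\xi_n\in G$, raising to the power $1/n$, and letting $n\to\infty$ then gives $\rho_{E^{\otimes k}}(G^{(k)})=\rho_E(G)^k$ directly from Definition~\ref{Outer:defjointE}.

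The step I expect to be the main obstacle is the first one: making the shuffle map a well-defined unitary. The delicacy is that $\odot$ quotients out the null space of a semi-inner product while the exterior product is formed over the spatial tensor product $\mathcal{A}^{\otimes k}$, so one must check simultaneously that the map respects both the null spaces and the $\mathcal{A}^{\otimes k}$--balancing $\xi\otimes a\eta-\xi a\otimes\eta$. Once this compatibility is verified, positivity and multiplicativity of the minimal norm make the remaining norm computation and the passage to the limit entirely routine.
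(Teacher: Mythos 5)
Your proof is correct. Both your argument and the paper's hinge on the single norm identity
\[
\bigl\|\xi_1^{\otimes k}\odot\cdots\odot\xi_n^{\otimes k}\bigr\|=\|\xi_1\odot\cdots\odot\xi_n\|^k,
\]
after which taking suprema over $G$ and letting $n\to\infty$ is the same routine step; but you obtain that identity by a genuinely different mechanism. The paper never touches the module elements directly: it passes to the associated CP maps and computes
$\|\eta\|^2=\|\tau_\eta\|=\|\tau_\eta(1)\|$, then uses $\tau_{\xi\odot\eta}=\tau_\eta\circ\tau_\xi$ together with the (implicit) identity $\tau_{\xi^{\otimes k}}=\tau_\xi^{\otimes k}$ to get
$\tau_{\xi_1^{\otimes k}\odot\cdots\odot\xi_n^{\otimes k}}(1^{\otimes k})=\bigl(\tau_{\xi_1\odot\cdots\odot\xi_n}(1)\bigr)^{\otimes k}$, and finishes with multiplicativity of the minimal norm on elementary tensors. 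You instead build the shuffle unitary $(E^{\otimes k})\odot(F^{\otimes k})\cong(E\odot F)^{\otimes k}$, identify $\xi_1^{\otimes k}\odot\cdots\odot\xi_n^{\otimes k}$ with $(\xi_1\odot\cdots\odot\xi_n)^{\otimes k}$, and only then invoke the cross-norm property. Your inner-product verification for the shuffle map is sound (preserving the semi-inner product automatically kills the null space and handles the balancing relations, and density of simple tensors gives surjectivity), so the extra care you flag at the end is indeed the only delicate point and it goes through; this is essentially the standard commutation of interior and exterior tensor products in Lance's book. What the paper's route buys is brevity, since the $\tau_\xi$ calculus is already set up as the main tool of the section and one never has to construct a module isomorphism; what your route buys is a structural statement (the unitary $(E^{\otimes k})^{\odot n}\cong(E^{\odot n})^{\otimes k}$) that is stronger than the norm equality alone and reusable elsewhere.
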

\begin{proof}
	For any $\xi_1,\ldots,\xi_n\in E$, we have
	\[\|\xi_1^{\otimes k}\odot\cdots\odot\xi_n^{\otimes k}\|=\|\tau_{\xi_1^{\otimes k}\odot\cdots\odot\xi_n^{\otimes k}}(1\otimes\cdots\otimes 1)\|^{1/2}=\|(\tau_{\xi_1\odot\cdots\odot\xi_n}(1))^{\otimes k}\|^{1/2}=\|\xi_1\odot\cdots\odot\xi_n\|^k.\]
	Therefore, we infer that
	\[\rho_{E^{\otimes k}}( G^{(k)})=\lim\limits_{n\to\infty}\sup\limits_{\xi_1,\ldots,\xi_n\in G}\|\xi_1^{\otimes k}\odot\cdots\odot\xi_n^{\otimes k}\|^{1/n}=\lim\limits_{n\to\infty}\sup\limits_{\xi_1,\ldots,\xi_n\in G}\|\xi_1\odot\cdots\odot\xi_n\|^{k/n}=\rho_E( G)^k.\]
\end{proof}
In particular when $G$ is finite, we can approximate the joint spectral radius by the outer spectral radius as follows.
\begin{thm}\label{outer:thm22}
	Let $E$ be a Hilbert $\mathcal{A}$--$\mathcal{A}$--bimodule with $\xi_1,\ldots,\xi_d\in E$. Then,
	\[d^{-1/2k}\widehat{\rho}_{E^{\otimes k}}(\xi_1^{\otimes k},\ldots,\xi_d^{\otimes k})^{1/k}
	\le\rho_E(\xi_1,\ldots,\xi_d)\le\widehat{\rho}_{E^{\otimes k}}(\xi_1^{\otimes k},\ldots,\xi_d^{\otimes k})^{1/k},\]
	for every $k\geq 1$ and
	\begin{equation}\label{formula}
		\rho_E(\xi_1,\ldots,\xi_d)= \lim\limits_{k\to\infty}\widehat{\rho}_{E^{\otimes k}}(\xi_1^{\otimes k},\ldots,\xi_d^{\otimes
			k})^{1/k}.\end{equation}
\end{thm}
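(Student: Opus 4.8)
The plan is to obtain \cref{formula} by combining the elementary two-sided inequality stated in the theorem with the already-established comparison results, so the real content is proving that inequality and then passing to the limit. First I would fix $k\geq 1$ and consider the $d$-tuple of tensor powers $(\xi_1^{\otimes k},\ldots,\xi_d^{\otimes k})$ as elements of the Hilbert $\mathcal{A}^{\otimes k}$--$\mathcal{A}^{\otimes k}$--module $E^{\otimes k}$. Applying \cref{outer:prop1} to this tuple in $E^{\otimes k}$ gives
\[
d^{-1/2}\,\widehat{\rho}_{E^{\otimes k}}(\xi_1^{\otimes k},\ldots,\xi_d^{\otimes k})
\le \rho_{E^{\otimes k}}(\xi_1^{\otimes k},\ldots,\xi_d^{\otimes k})
\le \widehat{\rho}_{E^{\otimes k}}(\xi_1^{\otimes k},\ldots,\xi_d^{\otimes k}),
\]
and by \cref{outer:lem2} the middle term equals $\rho_E(\xi_1,\ldots,\xi_d)^k$. (I am writing $\widehat{\rho}_E$ for $\widehat{\rho}_{E^{\otimes k}}$ in the statement, matching the paper's convention that the outer radius of the exterior tensor powers is computed in the natural ambient module.) Raising all three terms to the power $1/k$ yields exactly the displayed two-sided inequality
\[
d^{-1/2k}\,\widehat{\rho}_E(\xi_1^{\otimes k},\ldots,\xi_d^{\otimes k})^{1/k}
\le \rho_E(\xi_1,\ldots,\xi_d)
\le \widehat{\rho}_E(\xi_1^{\otimes k},\ldots,\xi_d^{\otimes k})^{1/k}.
\]

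Once this inequality is in hand, the limit formula \cref{formula} follows immediately by a squeeze argument: as $k\to\infty$ the lower bound differs from the upper bound only by the factor $d^{-1/2k}$, which tends to $1$ since $d$ is fixed. Hence both the upper and lower bounds converge to $\rho_E(\xi_1,\ldots,\xi_d)$, forcing the intermediate quantity $\widehat{\rho}_E(\xi_1^{\otimes k},\ldots,\xi_d^{\otimes k})^{1/k}$ to converge to the same value. In particular the limit exists and equals the joint spectral radius, which is the claim.

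The step I expect to need the most care is the clean application of \cref{outer:lem2} to identify the middle term. The lemma computes $\rho_{E^{\otimes k}}(G^{(k)})$ for $G^{(k)}=\{\xi^{\otimes k}:\xi\in G\}$ with $G=\{\xi_1,\ldots,\xi_d\}$, and its proof hinges on the multiplicativity identity $\|\xi_1^{\otimes k}\odot\cdots\odot\xi_n^{\otimes k}\| = \|\xi_1\odot\cdots\odot\xi_n\|^k$, which interlaces the interior tensor product $\odot$ on $E^{\otimes k}$ with the exterior tensor product $\otimes$ defining the powers. I would make sure that the tuple $(\xi_1^{\otimes k},\ldots,\xi_d^{\otimes k})$ as a \emph{sequence} (for the outer radius) and as a \emph{set} $G^{(k)}$ (for the joint radius, where repetitions are irrelevant) are being used consistently, since \cref{outer:prop1} and \cref{outer:lem2} refer to these two notions respectively. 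Apart from this bookkeeping, the argument is purely a squeeze and introduces no genuinely new analytic difficulty; all the substantive work has already been carried out in \cref{outer:prop1} and \cref{outer:lem2}.
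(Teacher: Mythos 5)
Your proposal is correct and follows exactly the paper's own route: apply Lemma \ref{outer:lem2} to identify $\rho_{E^{\otimes k}}(\xi_1^{\otimes k},\ldots,\xi_d^{\otimes k})=\rho_E(\xi_1,\ldots,\xi_d)^k$, sandwich it via Proposition \ref{outer:prop1}, take $k$-th roots, and squeeze. Your extra care about the set-versus-sequence distinction is sound bookkeeping but introduces nothing beyond what the paper already does.
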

\begin{proof}
	By Lemma \ref{outer:lem2}, $\rho_{E^{\otimes k}}(\xi_1^{\otimes
		k},\ldots,\xi_d^{\otimes k})=\rho_E(\xi_1,\ldots,\xi_d)^k$. Now,
	using Proposition \ref{outer:prop1}, we have
	$d^{-1/2}\widehat{\rho}_{E^{\otimes k}}(\xi_1^{\otimes k},\ldots,\xi_d^{\otimes
		k})\le \rho_E(\xi_1,\ldots,\xi_d)^k\le \widehat{\rho}_{E^{\otimes k}}(\xi_1^{\otimes
		k},\ldots,\xi_d^{\otimes k})$. This proves the inequality and taking
	limit  as $k\to\infty$, we get our desired approximation of the
	joint spectral radius.
	
\end{proof}

We explain the notions described above  in the particular case, where
the Hilbert $C^*$-bimodule $E$ is equal to $\mathcal{A}$, with
standard inner product: $\langle a, b\rangle=a^*b$ for $a,b$ in
$\mathcal{A}.$ Let $\mathcal{A}$ be a unital $C^*$-algebra with
$a_1,\ldots,a_d\in\mathcal{A}$. Consider the Hilbert
$\mathcal{A}$--$\mathcal{A}$--bimodule $E=\mathcal{A}$ and let $G=\{
a_1,\dots , a_d\}$ be any subset of $\mathcal{A}$. Clearly,
\[ \|a_1 \odot \dots \odot a_n\|^2 = \| \tau_{a_1\odot \dots \odot a_n} \|= \| \tau_{a_n}\circ \cdots \circ \tau_{a_1} \|= \| \tau_{a_1 \cdots a_n}\| =\|a_1\cdots a_n\|^2.\]
Hence, the Definition \ref{Outer:defjointE} gives us that,
\begin{equation}\label{outer:eq8}
	\rho_E(G)=\rho_{\mathcal{A}}(a_1,\ldots,a_d)=\lim\limits_{n\to\infty}\sup\limits_{1\le i_1,\ldots,i_n\le d}\lVert a_{i_1}\cdots a_{i_n}\rVert^{1/n},
\end{equation}
which is the familiar joint spectral radius as in \eqref{outer:eqjoint}.
For $G=\{a_1,\ldots,a_d\}\subseteq\mathcal{A}$,
$\tau_G:\mathcal{A}\to \mathcal{A}$ will be of the form
\begin{equation}
	\tau_G(x)=\sum\limits_{i=1}^d a_i^* xa_i.
\end{equation}
Observe that, by induction we obtain
\[\tau_{G}^n(x)=\sum\limits_{1\le i_1,\ldots,i_n\le d}(a_{i_1}\cdots a_{i_n})^*x(a_{i_1}\cdots a_{i_n})=\sum\limits_I a_I^* x a_I.\]
Here, the multi-index $I$ is running over the set $\{(i_1,\ldots,i_n):1\le i_1,\ldots,i_n\le d\}$ and for $I=(i_1,\ldots,i_n)$, we denote $a_I=a_{i_1}a_{i_2}\cdots a_{i_n}$. Therefore, $\lVert\tau_G^n\rVert=\lVert\tau_G^n({1})\rVert=\left\lVert\sum\limits_I a_I^* a_I\right\rVert$.
Hence, the outer spectral radius of $G$ has the expression
\begin{equation}\label{outer:eq9}
	\widehat{\rho}_{\mathcal{A}}(G)=\sqrt{r(\tau_G)}=\lim\limits_{n\to\infty}\lVert\tau_G^n\rVert^{1/2n}=\lim\limits_{n\to\infty}\left\lVert\sum\limits_{I}a_I^*a_I\right\rVert^{1/2n}.
\end{equation}

\begin{thm} Let
	$a_1, a_2, \ldots , a_d$ be  elements in a unital $C^*$-algebra
	$\mathcal{A}.$ Then,
	\begin{eqnarray}
		\widehat{\rho}_{\mathcal{A}}(a_1,\ldots,a_d)
		&=&\inf\{\lVert(va_1v^{-1},\ldots,va_dv^{-1})\rVert:\ v\in\mathcal{A}\text{ is strictly positive}\}\\
		&=&\inf\left\{\left\|\sum\limits_{i=1}^d|v a_i
		v^{-1}|^2\right\|^{1/2}:\, v\in\mathcal{A}\text{ is strictly
			positive}\right\},\\
		&=& \inf\left\{ r\left(\sum_{i=1}^d (a_i v^{-1})^*(v
		a_i)\right)^{1/2} : \, v \in \mathcal{A} \text{ is strictly
			positive} \right\}.
	\end{eqnarray}
	Furthermore,
	\begin{equation}
		\rho_{\mathcal{A}}(a_1,\ldots,a_d)=
		\lim\limits_{k\to\infty}\widehat{\rho}_{\mathcal{A}^{\otimes k}}(a_1^{\otimes
			k},\ldots,a_d^{\otimes k})^{1/k}.
	\end{equation}
\end{thm}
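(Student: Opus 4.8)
The plan is to recognize this theorem as the specialization of the general Hilbert-module results, Proposition \ref{outer:prop4} and Theorem \ref{outer:thm22}, to the module $E=\mathcal{A}$ equipped with the standard inner product $\langle a,b\rangle=a^*b$. With $G=(a_1,\ldots,a_d)$ and $\xi_i=a_i$, the associated completely positive map is $\tau_G(x)=\sum_{i=1}^d a_i^*xa_i$, and by definition $\widehat{\rho}_{\mathcal{A}}(a_1,\ldots,a_d)=\sqrt{r(\tau_G)}$. Since $\sum_{i=1}^d|a_i|^2$ is a finite sum it converges trivially, so the hypotheses of Proposition \ref{outer:prop4} are met. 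The whole argument is then bookkeeping: unpacking the abstract inner-product expressions into the matrix-style expressions written in the statement.

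First I would treat the equality of the right-hand sides of the first two displayed lines. The norm on the Hilbert module $E^d=\mathcal{A}^d$ of a tuple $(b_1,\ldots,b_d)$ is $\|\sum_{i=1}^d\langle b_i,b_i\rangle\|^{1/2}=\|\sum_{i=1}^d|b_i|^2\|^{1/2}$. Applying this with $b_i=va_iv^{-1}$ shows that $\|(va_1v^{-1},\ldots,va_dv^{-1})\|=\|\sum_{i=1}^d|va_iv^{-1}|^2\|^{1/2}$, so the first two characterisations are literally the same expression. The first line of Proposition \ref{outer:prop4}, read with $\xi_i=a_i$, then delivers both at once.

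Next I would derive the third characterisation from the second line of Proposition \ref{outer:prop4}, which reads $\widehat{\rho}_{\mathcal{A}}(G)=\inf\{r(\sum_i\langle wa_iw^{-1},a_i\rangle)^{1/2}:w\text{ strictly positive}\}$. The only task is to simplify the summand. Since a strictly positive $w$ is self-adjoint and invertible, $w^*=w$ and $(w^{-1})^*=w^{-1}$, whence $\langle wa_iw^{-1},a_i\rangle=(wa_iw^{-1})^*a_i=w^{-1}a_i^*wa_i=(a_iw^{-1})^*(wa_i)$. Renaming $w$ as $v$ reproduces exactly the expression in the third line.

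Finally, the limit formula is nothing but Theorem \ref{outer:thm22} specialized to $E=\mathcal{A}$ and $\xi_i=a_i$, so it may be quoted directly. I do not anticipate any genuine obstacle here: all the analytic content---existence of the limits, the Rota-Strang characterisation, the Friedland-type identity, and the $k$-th root approximation---already resides in the general theorems, and the only care needed is the routine verification that strictly positive elements are self-adjoint so that the adjoints collapse as above and the module norm on $\mathcal{A}^d$ takes the claimed form.
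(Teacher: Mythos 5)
Your proposal is correct and takes exactly the same route as the paper, whose entire proof is the one-line instruction to take $E=\mathcal{A}$ in Proposition \ref{outer:prop4} and Theorem \ref{outer:thm22}; your additional bookkeeping unpacking the module norm on $\mathcal{A}^d$ and the identity $\langle wa_iw^{-1},a_i\rangle=(a_iw^{-1})^*(wa_i)$ is just the routine specialization the paper leaves implicit.
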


\begin{proof}
	Take $E=\mathcal{A}$ in  Proposition \ref{outer:prop4} and Theorem \ref{outer:thm22}.
\end{proof}

\begin{rem}
	For a tuple of $m\times m$ matrices $(A_1,\ldots,A_d)$,
	$$\widehat{\rho}_{M_m}(A_1,\ldots,A_d)=\sqrt{r\left(\sum\limits_{i=1}^d\overline{A_i}\otimes A_i\right)}=\sqrt{r\left(\sum\limits_{i=1}^d\overline{A_i^*}\otimes A_i^*\right)}=\widehat{\rho}_{M_m}(A_1^*,\ldots,A_d^*).$$
	Furthermore, if $\mathcal{A}$ is a finite dimensional $C^*$-algebra,
	then for a tuple $(a_1,a_2,\ldots , a_d)$ of elements of
	$\mathcal{A}$,
	$\widehat{\rho}_{\mathcal{A}}(a_1,\ldots,a_d)=\widehat{\rho}_{\mathcal{A}}(a_1^*,\ldots,a_d^*)$.
	However, this equality does not generally hold if $\mathcal{A}$ is
	not finite dimensional. For example, consider the unital
	$C^*$-algebra $\mathcal{A}=\mathscr{B}(\ell^2)$ and the isometries
	$V,W$ on $\ell^2$ defined as $V(e_n)=e_{2n+1}$ and $W(e_n)=e_{2n}$
	on the standard basis. Let $\tau(X)=V^*XV+W^*XW$ and
	$\sigma(X)=VXV^*+WXW^*$. Then,
	$\widehat{\rho}_{\mathcal{A}}(V,W)=\sqrt{r(\tau)}=\sqrt{2}$, whereas
	$\widehat{\rho}_{\mathcal{A}}(V^*,W^*)=\sqrt{r(\sigma)}=1$.
\end{rem}

\section{Positive maps on finite dimensional algebras}\label{Outer:sec:positivemap}

For the rest of our discussion, we focus on  finite dimensional
$C^*$-algebras. To begin with we look at general positive maps on
finite dimensional algebras. We study the maximal part of a general
positive map, and its analysis allows us to construct a positive
eigenvector with Perron-Frobenius eigenvalue.

Let $\mathcal{A}$ be a unital $C^*$-algebra with $a\in\mathcal{A}$.
Consider the elementary CP map $\alpha_a(x)=a^*xa, ~x\in
\mathcal{A}$. Applying Corollary \ref{outer:cor3}, we obtain the following well-known formula for
the spectral radius $r(a)$ of $a$:
\begin{align}\label{outer:eq10}
	r(a)&=\inf\{\lVert b\rVert:\ b\in\mathcal{A}\text{ is similar to }a\}\\
	&=\inf\{\lVert vav^{-1}\rVert:\ v\in\mathcal{A}_+\text{ is invertible}\}.
\end{align}
It is a natural question as to when the infimum in
\eqref{outer:eq10} can be attained. With help from the following
Lemma, we show that in finite dimensions the boundedness of the set $\{
(a/r(a))^n: n\geq 0\}$ ensures this.
\begin{lem}\label{outer:lem10}
	Let $A\in M_m$ be a matrix with $r(A)=1$. If there is $M>0$ such that $\lVert A^n\rVert\le M$ for all $n\in\mathbb{N}$, then there exists $P\in GL_m$ such that $\lVert PAP^{-1}\rVert=1$.
\end{lem}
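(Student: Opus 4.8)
The plan is to reduce the statement to producing an $A$-invariant inner product in which $A$ becomes a contraction. Writing $\|\cdot\|$ for the operator norm, any invertible $P$ satisfies $\|PAP^{-1}\|\ge r(PAP^{-1})=r(A)=1$, since similar matrices share eigenvalues and the spectral radius never exceeds the norm. Hence it suffices to find $P\in GL_m$ with $\|PAP^{-1}\|\le 1$. Setting $Q=P^*P$, the condition $\|PAP^{-1}\|\le 1$ is equivalent to $A^*QA\le Q$ with $Q$ positive definite; conversely any positive-definite $Q$ with $A^*QA\le Q$ yields the desired $P=Q^{1/2}$. So the whole problem becomes: construct a positive-definite $Q$ satisfying $A^*QA\le Q$.

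First I would exploit power-boundedness to read off the Jordan structure of $A$. The bound $\|A^n\|\le M$ forces every eigenvalue of modulus $1$ to be semisimple: a Jordan block of size $\ge 2$ attached to a unimodular $\lambda$ makes the superdiagonal entries of $A^n$ grow like $n|\lambda|^{n-1}=n$, contradicting boundedness. Together with $r(A)=1$ this lets me split $\mathbb{C}^m=V_1\oplus V_0$ into the $A$-invariant subspaces spanned by the generalized eigenspaces for eigenvalues of modulus $1$ and of modulus $<1$, respectively. On $V_1$ the map $A|_{V_1}$ is diagonalizable with unimodular eigenvalues, so choosing the inner product $Q_1$ that makes an eigenbasis orthonormal renders $A|_{V_1}$ unitary, i.e. $A|_{V_1}^*Q_1A|_{V_1}=Q_1$. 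On $V_0$ we have $r(A|_{V_0})<1$, so by Gelfand's formula the series $Q_0:=\sum_{n\ge 0}(A|_{V_0}^*)^n(A|_{V_0})^n$ converges to a positive-definite operator, and a telescoping computation gives $A|_{V_0}^*Q_0A|_{V_0}=Q_0-I\le Q_0$.

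I then assemble $Q$ by declaring $V_1$ and $V_0$ orthogonal and using $Q_1$ on $V_1$ and $Q_0$ on $V_0$; this is a genuine positive-definite form on $\mathbb{C}^m$ even though the decomposition is not orthogonal for the original inner product. Because both summands are $A$-invariant and $Q$-orthogonal, for $x=x_1\oplus x_0$ one has $\|Ax\|_Q^2=\|Ax_1\|_Q^2+\|Ax_0\|_Q^2\le\|x_1\|_Q^2+\|x_0\|_Q^2=\|x\|_Q^2$, so $A^*QA\le Q$. Taking $P=Q^{1/2}$ gives $\|PAP^{-1}\|\le 1$, and combined with the lower bound $\|PAP^{-1}\|\ge 1$ from the first paragraph, $\|PAP^{-1}\|=1$.

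I expect the main obstacle to be the peripheral part: verifying that power-boundedness forces semisimplicity of the modulus-one eigenvalues (the Jordan-block growth argument), and then correctly gluing the two invariant pieces into a single positive-definite $Q$ without losing the inequality $A^*QA\le Q$ across the non-orthogonal decomposition. The $V_0$ part is routine once $r(A|_{V_0})<1$ is in hand.
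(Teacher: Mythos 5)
Your proof is correct. The pivotal step — that power-boundedness rules out Jordan blocks of size $\ge 2$ for unimodular eigenvalues, via the growth $|n\lambda^{n-1}|=n$ of the superdiagonal entries — is exactly the argument the paper uses, and your splitting of $\mathbb{C}^m$ into the peripheral part $V_1$ and the stable part $V_0$ mirrors the paper's block decomposition $J=D\oplus J_1\oplus\cdots\oplus J_l$ of the Jordan form. Where you diverge is in the construction of the similarity and the treatment of the stable part. The paper stays entirely at the level of explicit matrices: it conjugates each Jordan block $J_\lambda$ with $|\lambda|<1$ by a diagonal matrix $C=\mathrm{diag}(\epsilon^{k},\ldots,\epsilon)$ to shrink the superdiagonal, getting $\lVert CJ_\lambda C^{-1}\rVert\le|\lambda|+(k-1)\epsilon<1$, and then assembles $P=TS^{-1}$ directly, reading off $\lVert PAP^{-1}\rVert$ as the maximum of the block norms. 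You instead reformulate the problem as finding a positive definite $Q$ with $A^*QA\le Q$ and build $Q$ as a Lyapunov-type invariant form: an eigenbasis-orthonormalizing inner product on $V_1$ (making $A|_{V_1}$ unitary) glued $Q$-orthogonally to the convergent sum $\sum_{n\ge0}(A|_{V_0}^*)^n(A|_{V_0})^n$ on $V_0$. Your route is more conceptual and avoids any $\epsilon$-bookkeeping with Jordan blocks (only semisimplicity of the peripheral spectrum and $r(A|_{V_0})<1$ are used, not the full Jordan form of the stable part); the paper's route is more elementary and hands you the conjugating matrix explicitly. Both are complete; your gluing step is sound because $V_1$ and $V_0$ are $A$-invariant, so the $Q$-orthogonal direct sum of the two contractive pieces is again contractive, and the lower bound $\lVert PAP^{-1}\rVert\ge r(A)=1$ closes the argument.
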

\begin{proof}
	Let $J$ be the Jordan form of the matrix $A$. Then $A=SJS^{-1}$ for some $S\in GL_{m}$. Let $\lambda\in\mathbb{C}$ be an eigenvalue
	of $A$ and $J_{\lambda}$ be the Jordan block corresponding to
	$\lambda$ of size $k$. First, suppose $|\lambda|=1$. Note that,
	$J_{\lambda}=\lambda I_k+N$ where $N$ is the $k\times k$ nilpotent
	matrix with nilpotency $k$. Observe that,
	\[J_{\lambda}^n=\lambda^n
	I_k+\binom{n}{1}\lambda^{n-1}N+\binom{n}{2}\lambda^{n-2}N^2+\cdots+\binom{n}{k-1}\lambda^{n-k+1}N^{k-1}.\]
	When $k\geq 2$, since $\|A^n\|\leq M$, we have
	\[n=|n\lambda^{n-1}|=|\langle e_1,J_{\lambda}^n e_2\rangle|\le\lVert J_{\lambda}^n\rVert\le\lVert J^n\rVert\le\lVert S\rVert\lVert S^{-1}\rVert\lVert A^n\rVert\le \lVert S\rVert\lVert S^{-1}\rVert M\ \text{ for all }n,\]
	which is paradoxical. So, we must have $k=1$ and hence for $|\lambda|=1$, $J_{\lambda}$ is of size 1. Thus, the Jordan form of $A$ looks like $J=\left[\begin{smallmatrix}
		D &  & & & \\
		& J_1 & & & \\
		& & J_2 & & \\
		& & & \ddots & \\
		& & & & J_l
	\end{smallmatrix}\right]$ where $D$ is a diagonal matrix with diagonal entries of modulus $1$ and each $J_i$ is a Jordan block corresponding to eigenvalue of modulus less than $1$.
	
	Now, consider $|\lambda|<1$. Choose $\epsilon>0$ such that $|\lambda|+(k-1)\epsilon<1$ and let
	$C=\text{diag}(c_1,\ldots,c_k)$ where $c_j=\epsilon^{k-j+1}\ \forall j\in\{1,\ldots,k\}.$ We infer that
	\[\lVert CJ_{\lambda}C^{-1}\rVert=\left\lVert\lambda I_k+\epsilon\sum\limits_{j=2}^kE_{(j-1)j}\right\rVert\le|\lambda|+(k-1)\epsilon<1.\]
	where $\{E_{ij}: 1\leq i,j \leq k\}$ is the standard basis of $M_k$ consisting of elementary matrices. Define \[T=\left[\begin{smallmatrix}
		I &  & & & \\
		& C_1 & & & \\
		& & C_2 & & \\
		& & & \ddots & \\
		& & & & C_l
	\end{smallmatrix}\right]\text{ and }P=TS^{-1}.\]
	Then, $\lVert PAP^{-1}\rVert=\lVert TJT^{-1}\rVert=\max\{\lVert D\rVert,\lVert C_jJ_j {C_j}^{-1}\rVert:\ j\in\{1,\ldots,l\}\}=1$.
\end{proof}
We now answer the above query in form of the following theorem.
\begin{thm}\label{outer:thm15}
	Let $\mathcal{A}$ be a finite dimensional $C^*$-algebra and $a\in\mathcal{A}$ with $r=r(a)>0$. Then the following statements are equivalent:
	\begin{enumerate}
		\item There is $M>0$ such that $\left\lVert\frac{a^n}{r^n}\right\rVert\le M$ for all $n\in\mathbb{N}$.
		\item There is $b\in \mathcal{A}$ which is similar to $a$ and $\lVert b\rVert=r$.
		\item There is strictly positive $v\in\mathcal{A}$ such that $\lVert v^{-1}av\rVert=r$.
	\end{enumerate}
\end{thm}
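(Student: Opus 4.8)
The plan is to prove the cycle $(1)\Rightarrow(2)\Rightarrow(3)\Rightarrow(1)$, isolating the genuinely new work in $(1)\Rightarrow(2)$. The implication $(3)\Rightarrow(1)$ is immediate: if $v$ is strictly positive with $\lVert v^{-1}av\rVert=r$, put $b=v^{-1}av$; then $a^n=vb^nv^{-1}$ gives $\lVert a^n\rVert\le\lVert v\rVert\lVert v^{-1}\rVert\,r^n$, so $\lVert(a/r)^n\rVert\le M$ with $M=\lVert v\rVert\lVert v^{-1}\rVert$.

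For $(1)\Rightarrow(2)$ I would exploit the Wedderburn decomposition $\mathcal{A}\cong\bigoplus_{k=1}^K M_{m_k}$, writing $a=(a_1,\dots,a_K)$ and $A=a/r=(A_1,\dots,A_K)$, so that $\max_k r(A_k)=r(A)=1$ and, by $(1)$, $\lVert A_k^n\rVert\le M$ for every $k$ and $n$. \textbf{The main obstacle} is that Lemma \ref{outer:lem10} produces a conjugating matrix living in a full matrix algebra, which need not lie in $\mathcal{A}$; the remedy is to conjugate block by block so that the similarity stays inside $\mathcal{A}$. For each block with $r(A_k)=1$, Lemma \ref{outer:lem10} supplies $P_k\in GL_{m_k}$ with $\lVert P_kA_kP_k^{-1}\rVert=1$; for each block with $r(A_k)<1$, the infimum formula \eqref{outer:eq10} applied inside $M_{m_k}$ supplies a (positive, hence invertible) $P_k$ with $\lVert P_kA_kP_k^{-1}\rVert<1$. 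Setting $P=(P_1,\dots,P_K)\in\mathcal{A}$, which is invertible, and $b=PaP^{-1}$, one gets $b$ similar to $a$ within $\mathcal{A}$ with $\lVert b\rVert=r\max_k\lVert P_kA_kP_k^{-1}\rVert=r$, the maximum being attained precisely because at least one block has spectral radius $1$.

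Finally, for $(2)\Rightarrow(3)$ I would transfer the problem to the elementary CP map $\alpha_a(x)=a^*xa$ and appeal to Theorem \ref{outer:thm4}. The facts I would use are $r(\alpha_a)=r(a)^2=r^2$ (from $\lVert\alpha_a^n\rVert=\lVert a^n\rVert^2$), $\lVert\alpha_a\rVert=\lVert a\rVert^2$, and the multiplicativity of conjugation, $\alpha_d^{-1}\circ\alpha_a\circ\alpha_d=\alpha_{dad^{-1}}$ for every invertible $d$. Writing $b=cac^{-1}$ from $(2)$ and taking $d=c$, the map $\alpha_b=\alpha_c^{-1}\circ\alpha_a\circ\alpha_c$ is a positive map elementarily similar to $\alpha_a$ with $\lVert\alpha_b\rVert=\lVert b\rVert^2=r^2=r(\alpha_a)$; this is exactly condition $(1)$ of Theorem \ref{outer:thm4} for $\varphi=\alpha_a$. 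Hence condition $(3)$ of that theorem yields a strictly positive $v$ with $\lVert\alpha_v^{-1}\circ\alpha_a\circ\alpha_v\rVert=r^2$, and since $\alpha_v^{-1}\circ\alpha_a\circ\alpha_v=\alpha_{vav^{-1}}$ this reads $\lVert vav^{-1}\rVert^2=r^2$. Replacing the strictly positive element $v$ by its (strictly positive) inverse then gives $\lVert v^{-1}av\rVert=r$, which is $(3)$.
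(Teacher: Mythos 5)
Your proposal is correct and follows essentially the same route as the paper: the cycle $(1)\Rightarrow(2)\Rightarrow(3)\Rightarrow(1)$, with the Wedderburn block decomposition plus Lemma \ref{outer:lem10} handling $(1)\Rightarrow(2)$, and a transfer to the elementary CP map $\alpha_a$ combined with Theorem \ref{outer:thm4} handling $(2)\Rightarrow(3)$. The only cosmetic difference is that you enter Theorem \ref{outer:thm4} through its condition (1) (exhibiting $\alpha_b$ as an elementarily similar map of norm $r^2$), whereas the paper enters through condition (2) (exhibiting $w=p^*p$ with $\alpha_a(w)\le w$); since that theorem is an equivalence, both are fine.
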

\begin{proof}
	As $\mathcal{A}$ is a finite dimensional $C^*$-algebra, by Structure Theorem, we have $\mathcal{A}=M_{n_1}\oplus\cdots\oplus M_{n_d}$. Therefore, for given $a\in\mathcal{A}$, $a$ is of the form $a=X_1\oplus\cdots\oplus X_d$ where each $X_i\in M_{n_i}$. Without loss of generality, we assume that $r=1$.
	
	\noindent {(1)} $\Rightarrow$ {(2)}: Suppose, for given $a\in\mathcal{A}$ with $r(a)=1$, there is $M>0$ such that $\lVert a^n\rVert\le M$ for all $n\in\mathbb{N}$. Let $I=\{i:\, r(X_i)=1, 1\le i\le d\}$ and $K=\{i:\, r(X_i)<1, 1\le i\le d\}$. Since $r=\max\{r(X_i):\ i\in\{1,\ldots,d\}\}=1$, we have $I\ne\emptyset$ and $I\cup K=\{1,\ldots,d\}$. By Corollary \ref{outer:cor3},
	\[
	\forall\, i\in K,\ \exists\, P_i\in GL_{n_i}\text{ such that }\lVert P_iX_i{P_i}^{-1}\rVert<1
	\]
	Now, for any $i\in I$, spectral radius of $X_i$ is $1$ and $\lVert X_i^n\rVert\le\lVert a^n\rVert\le M$ for all $n\in\mathbb{N}$. Then by Lemma \ref{outer:lem10}
	\[
	\exists\, P_i\in GL_{n_i} \text{ such that }\lVert P_iX_i{P_i}^{-1}\rVert=1.
	\]
	Define $$p=\left[\begin{smallmatrix}
		P_1 &  & &  \\
		& P_2 & & \\
		& & \ddots & \\
		& & & P_d
	\end{smallmatrix}\right]\text{ and }b=pap^{-1}\in\mathcal{A}.$$
	Then, $\lVert b\rVert=\max\{\lVert P_iX_i{P_i}^{-1}\rVert:\ i\in\{1,\ldots,d\}\}=1$.
	
	\noindent{(2)} $\Rightarrow$ {(3)}: Assume that there is $b\in \mathcal{A}$ which is similar to $a$ with $\|b\|=1$. Thus, there is an invertible $p\in\mathcal{A}$ such that $\|b\|=\lVert pap^{-1}\rVert=1$. Let $\tau$ be the elementary CP map $\alpha_a$ and $w=p^*p$. Then
	\[w-\tau(w)=p^*p-a^*p^*pa=p^*(1-{p^*}^{-1}a^*p^*pa{p}^{-1})p=p^*(1-(pap^{-1})^*(pap^{-1}))p\ge0\]
	Therefore, by Theorem \ref{outer:thm4}, there exists positive, invertible $v\in\mathcal{A}$ such that $\lVert\alpha_v\circ\tau\circ\alpha_v^{-1}\rVert=1$.
	Clearly,
	\[ \lVert\alpha_v\circ\tau\circ\alpha_v^{-1}\rVert=\lVert\alpha_{v^{-1}av}\rVert=\lVert v^{-1}av\rVert^2.\]
	Thus, there is a positive, invertible $v\in \mathcal{A}$ such that $\lVert v^{-1}av\rVert=1$.

	\noindent{(3)} $\Rightarrow$ {(1)}: Let $b=v^{-1}av$ and we have $\lVert b\rVert=1$. Then for all $n\in\mathbb{N}$,
	\[\lVert a^n\rVert=\lVert vb^n v^{-1}\rVert\le \lVert v\rVert\lVert v^{-1}\rVert\lVert b\rVert^n=\lVert v\rVert\lVert v^{-1}\rVert.\]
	This proves $(1)$.
\end{proof}
For a CP map $\tau$ on finite dimensional $C^*$-algebra,  it is evident that boundedness of the sequence $\left\lVert\frac{\tau^n}{r(\tau)^n}\right\rVert$ is a necessary condition for existence of an elementarily similar CP map $\sigma$ with $\|\sigma\|=r(\tau)$. But it is not sufficient, as evidenced by the following example.
\begin{eg}\label{outer:eg1}
	Let $\tau:M_2\to M_2$ be a CP map defined by
	\[\tau\left(\left[\begin{matrix}
		a & b \\
		c & d
	\end{matrix}\right]\right)=\left[\begin{matrix}
		a+d & 0 \\
		0 & 0
	\end{matrix}\right].\]
	It's easy to see that $\tau^n(I)=\begin{bmatrix}
		2 & 0\\
		0 & 0
	\end{bmatrix}$. Therefore, $\lVert\tau^n\rVert=\lVert\tau^n(I)\rVert=2$ for all $n\in\mathbb{N}$. This shows that $r(\tau)=1$ and $\left\lVert\frac{\tau^n}{r(\tau)^n}\right\rVert$ is bounded. Now, if there is a CP map $\widetilde{\tau}$ which is elementarily similar to $\tau$ such that $\lVert\widetilde{\tau}\rVert=r(\tau)$, then due to Theorem \ref{outer:thm4}, there is a strictly positive $V=\left[\begin{matrix}
		a & b \\
		c & d
	\end{matrix}\right]\in M_2$ such that $\tau(V)\le V$. But
	\[\begin{bmatrix}
		a+d & 0\\
		0 & 0
	\end{bmatrix}\le \begin{bmatrix}
		a & b\\
		c & d
	\end{bmatrix}\implies\begin{bmatrix}
		-d & b\\
		c & d
	\end{bmatrix}\ge 0\implies d\le0,\]
	which contradicts the strict positivity of $d$.
\end{eg}

In their seminal paper \cite{EvaDE:HoeR78},  Evans and H\o egh-Krohn explored Perron-Frobenius theory for positive maps on $C^*$-algebras. They showed that for any positive map $\varphi$ on a finite dimensional $C^*$-algebra $\mathcal{A}$, the spectral radius $r(\varphi )$ is an eigenvalue of $\varphi $ with a positive eigenvector. However, their proof is completely existential.  We adopt a different approach and
explicitly construct the positive eigenvector.

Note that when $r(\varphi)=0$, there is $n$ such that $\varphi^n=0$
and $\varphi^{n-1}\ne0$, hence $\varphi^{n-1}(1)$ is a positive
eigenvector with eigenvalue equal to the spectral radius. Therefore,
in the following we assume $r(\varphi)>0$.  The key to our
construction is the `maximal part of a positive map' defined as
follows. This map was introduced by Pascoe \cite{PasJE21} using matrices. The justification for the
nomenclature will be clear from the discussion below.

\begin{defn}
	Let $\varphi$ be a positive map on a finite dimensional
	$C^*$-algebra $\mathcal{A}$ with $r(\varphi)>0$. The {\em maximal
		part\/} of $\varphi$ is defined as
	\begin{equation}\label{outer:eq4}
		\widehat{\varphi}=\lim\limits_{N\to\infty}\frac{1}{N}\sum\limits_{n=1}^N\frac{\varphi^n}{\lVert\varphi^n\rVert_{\beta}},
	\end{equation}
	where $\beta$ is a Jordan basis of $\varphi$ and
	$\|\varphi^n\|_\beta:=\|[\varphi^n]_\beta\|_\infty$.
\end{defn}

The series in equation \eqref{outer:eq4} is not well-defined if
$r(\varphi )$ equals 0,  as $\varphi$ would then become nilpotent. For this reason, we have assumed $r(\varphi)>0$. Now, we need
to show that the series in \eqref{outer:eq4} is convergent. Before
doing that, let us explain the norm appearing in the denominator.
Suppose $m:=\text{dim}(\mathcal{A})$, and $[\varphi]_{\gamma}\in
M_m$ is the matrix of $\varphi$ with respect to a basis  $\gamma$ of
$\mathcal{A}.$  Now  $\|\cdot\|_\infty$ denotes the supremum norm on
$M_m$, that is,
$$\|(a_{ij})\|_\infty=\sup\{|a_{ij}|:\,i,j\in\{1,\ldots, m\}\}.$$
As $\mathcal{A}$ is a finite dimensional $C^*$-algebra,
$\lVert\cdot\rVert_{\gamma}$ is equivalent to the usual operator
norm $\lVert\cdot\rVert$ of the Banach algebra
$\mathscr{B}(\mathcal{A})$. Currently, we are choosing Jordan basis
of $\varphi $, meaning that on this basis the matrix of $\varphi $
is a direct sum of Jordan blocks. By uniqueness of the Jordan
decomposition the norm
$\|\varphi^n\|_\beta:=\|[\varphi^n]_\beta\|_\infty$ is independent
of the choice of the Jordan basis. Consequently, the maximal part of
$\varphi$ is also independent of the choice of Jordan basis.

Let $x\in\mathcal{A}$  be a vector in a Jordan basis  $\beta $ of
$\varphi $. Suppose that it is a generalized eigenvector of $\varphi$ of
rank $k$ corresponding to eigenvalue $\lambda$.  Then, taking powers
of the Jordan block,  we see that  for any $n\in\mathbb{N}$,

\begin{equation}\label{powers}\varphi^n(x)=\sum\limits_{j=0}^{k-1}
	\binom{n}{j}\lambda^{n-j}.(\varphi-\lambda)^{j}x,\end{equation} and
$(\varphi-\lambda)^{j}x$ are Jordan basis vectors. Using this
expression, it is possible to compute $\|\varphi ^n\|_\beta .$ Here
it is convenient to have the following definition.

\begin{defn}\label{maximal spectrum}
	Let $\varphi$ be a positive map on a finite dimensional
	$C^*$-algebra. The degeneracy index $d_{\varphi,\lambda}$ of an
	eigenvalue $\lambda$ is defined to be the size of the largest Jordan
	block corresponding to $\lambda$, or equivalently, the power of the
	$(z-\lambda)$  in the factorization of the minimal polynomial $m(z)$
	of $\varphi$. The {\em maximal degeneracy index\/} of $\varphi$,
	denoted by $d_\varphi$, is defined as
	\[d_\varphi=\max\{d_{\varphi,\lambda}:\, |\lambda|=r(\varphi)\}.\]
	Accordingly, the {\em maximal spectrum\/} of $\varphi$ is set as
	\[\{\lambda\in\sigma(\varphi):\, |\lambda|=r(\varphi)\text{ and
	}d_{\varphi,\lambda}=d_\varphi\}.\]
\end{defn}
Now from equation \eqref{powers}, since the binomial coefficients $
\binom{n}{j}$ are increasing for $0\leq j\leq [n/2],$ for large $n$,
$ \|[\varphi^n]_\beta\|_\infty $ occurs at the top right hand corner
of the powers of Jordan blocks with maximal degeneracy index of
maximal modulus spectral values. Then, it is evident that for any
Jordan basis $\beta$ of $\varphi$, we have

\begin{equation}\label{outer:norm}
	\|\varphi^n\|_{\beta}=
	\| [\varphi^n]_\beta\|_{\infty}=\binom{n}{d_\varphi-1}r(\varphi
	)^{n-d_{\varphi }+1},
\end{equation}
for all but finitely many $n$. To demonstrate
that for a positive  map $\varphi$ on $\mathcal{A}$ with
$r(\varphi)>0$, $\widehat{\varphi}$ is a well defined non-zero
positive  map, we need the following lemma.

\begin{lem}\label{outer:lem5}
	Let $\{V_n\}$ be a sequence of positive $m\times m$ matrices such
	that
	\[\lim\limits_{N\to\infty}\frac{1}{N}\sum\limits_{n=1}^N V_n=0.\]
	Then there exists a sub-sequence $\{V_{n_k}\}$ converging to $0$.
\end{lem}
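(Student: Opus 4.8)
The plan is to reduce this matrix statement to an elementary fact about nonnegative scalar sequences, using the trace functional. For each $n$ set $t_n=\operatorname{tr}(V_n)$; since every $V_n$ is positive (positive semidefinite), we have $t_n\ge 0$. Because the trace is linear and continuous, applying it to the hypothesis gives
\[ \frac{1}{N}\sum_{n=1}^N t_n=\operatorname{tr}\!\left(\frac{1}{N}\sum_{n=1}^N V_n\right)\longrightarrow \operatorname{tr}(0)=0, \]
so the nonnegative scalar sequence $(t_n)$ has Ces\`aro averages converging to $0$.

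Next I would argue that $\liminf_{n}t_n=0$. If not, there would be $c>0$ and $n_0$ with $t_n\ge c/2$ for all $n\ge n_0$; then $\frac{1}{N}\sum_{n=1}^N t_n\ge \frac{N-n_0}{N}\cdot\frac{c}{2}$, whose limit is $c/2>0$, contradicting the convergence to $0$ just obtained. Hence $\liminf_n t_n=0$, and we may extract a subsequence $(n_k)$ with $t_{n_k}\to 0$.

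Finally, I transfer back to the matrices. For a positive semidefinite $V$ with eigenvalues $\lambda_1\ge\cdots\ge\lambda_m\ge 0$, the operator norm is $\|V\|=\lambda_1\le\sum_{i}\lambda_i=\operatorname{tr}(V)$. Thus $\|V_{n_k}\|\le t_{n_k}\to 0$, so $V_{n_k}\to 0$, which is the desired subsequence.

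The content of the lemma lies entirely in the use of positivity, so the main point to watch is that it is invoked in exactly two places. First, positivity is what makes $t_n\ge 0$, so that a vanishing Ces\`aro average cannot arise from cancellation and must force $\liminf_n t_n=0$; second, it is what makes the trace dominate the operator norm. For a sequence of self-adjoint matrices of indefinite sign both steps fail, so beyond these two invocations of positivity the argument is routine.
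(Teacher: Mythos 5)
Your proof is correct and follows essentially the same route as the paper's: pass to traces, show the nonnegative scalar sequence $\operatorname{tr}(V_n)$ has $\liminf$ equal to $0$ because its Ces\`aro averages vanish, and then use $\lVert V\rVert\le\operatorname{tr}(V)$ for positive $V$ to transfer the subsequential convergence back to the matrices. The only cosmetic difference is that you argue the $\liminf$ step by contradiction while the paper estimates it directly.
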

\begin{proof}
	Let $W_N=\frac{1}{N}\sum\limits_{n=1}^N V_n$. Clearly, $\{W_N\}$ is
	a sequence of positive matrices converging to $0$, in particular
	$\text{Tr}(W_N)\to 0$. Let $\liminf\, \text{Tr}(V_n)=t$. Given any
	$\epsilon>0$, there is $k\in\mathbb{N}$ such that $\text{Tr}(V_n)>
	t-\epsilon$ for all $n>k$. Then for $N>k$,
	$$\text{Tr}(W_N)=\frac{1}{N}\sum\limits_{n=1}^k \text{Tr}(V_n)+\frac{1}{N}\sum\limits_{n=k+1}^N \text{Tr}(V_n)>\frac{1}{N}
	\sum\limits_{n=1}^k \text{Tr}(V_n)+(t-\epsilon)(1-k/N)$$ Taking
	limit as $N\to\infty$, we obtain $(t-\epsilon)1\le0$ for all
	$\epsilon>0$. Therefore, $t=0$ and hence there is a sub-sequence
	$\{\text{Tr}(V_{n_k})\}$ converging to $0$. Since $V_{n_k}$'s are
	positive matrices, we conclude that $\{V_{n_k}\}$ converges to $0$.
\end{proof}

We now prove that the maximal part of $\varphi$ defines a non-zero
positive  map.
\begin{prop}\label{outer:prop3}
	Let $\varphi$ be a positive  map on a finite dimensional
	$C^*$-algebra $\mathcal{A}$ with $r(\varphi)>0$. Then
	$\widehat{\varphi}$ is a non-zero positive map on $\mathcal{A}$. If
	$\varphi $ is a CP map, then so is $\widehat{\varphi}.$
	
\end{prop}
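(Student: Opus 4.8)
The plan is to prove three things in turn: that the Cesàro averages defining $\widehat{\varphi}$ converge, that the limit is positive (and completely positive when $\varphi$ is), and that it is non-zero. I would dispose of positivity first, since it is the soft part. As $r(\varphi)>0$ the map $\varphi$ is not nilpotent, so $\varphi^n\neq 0$ and $\|\varphi^n\|_\beta>0$ for every $n$; hence each summand $\varphi^n/\|\varphi^n\|_\beta$ is a positive scalar multiple of the composition $\varphi\circ\cdots\circ\varphi$ of positive maps, and so is positive, as is every partial average $\frac{1}{N}\sum_{n=1}^N \varphi^n/\|\varphi^n\|_\beta$. When $\varphi$ is CP the same reasoning makes each average CP. Since $\mathscr{B}(\mathcal{A})$ is finite dimensional, the cone of positive maps and the cone of CP maps are closed, so once convergence is established the limit $\widehat{\varphi}$ automatically inherits positivity (respectively complete positivity).

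For convergence I would pass to the Jordan basis. Writing $m=\dim\mathcal{A}$ and $T=[\varphi]_\beta\in M_m$ for the Jordan form, we have $[\varphi^n]_\beta=T^n$, and \eqref{outer:norm} gives $\|\varphi^n\|_\beta=a_n:=\binom{n}{d_\varphi-1}r^{\,n-d_\varphi+1}$ for all large $n$, with $r=r(\varphi)$. The idea is to analyse the entries of $T^n/a_n$ block by block. For a Jordan block at eigenvalue $\lambda$ the $(i,j)$ entry of $T^n$ is $\binom{n}{j-i}\lambda^{\,n-(j-i)}$, and after dividing by $a_n$ one separates the binomial ratio $\binom{n}{j-i}/\binom{n}{d_\varphi-1}$ from the exponential ratio. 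When $|\lambda|<r$ the factor $(|\lambda|/r)^n$ forces the entry to $0$ (the exponential decay beating any polynomial growth of the binomial ratio); when $|\lambda|=r$ the binomial ratio tends to $0$ unless $j-i=d_\varphi-1$, which can occur only at the top-right corner of a maximal, i.e. $d_\varphi$-sized, block, where the entry equals exactly $(\lambda/r)^{\,n-d_\varphi+1}$. Thus each entry of $T^n/a_n$ either tends to $0$ or equals a unimodular sequence $(\lambda/r)^{\,n-d_\varphi+1}$ with $|\lambda|=r$. Cesàro-averaging then finishes matters: the null entries keep averaging to $0$, while for the corner entries the elementary fact that $\frac{1}{N}\sum_{n=1}^N z^n$ converges for $|z|\le 1$ (to $1$ when $z=1$ and to $0$ otherwise) shows every entrywise average converges. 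Hence $\widehat{\varphi}=\lim_N\frac{1}{N}\sum_{n=1}^N \varphi^n/\|\varphi^n\|_\beta$ exists.

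For non-vanishing I would argue by contradiction using Lemma \ref{outer:lem5}. Realise $\mathcal{A}$ as a $C^*$-subalgebra of some $M_m$, so positive elements are positive matrices, and set $V_n=\varphi^n(1)/\|\varphi^n\|_\beta$, a sequence of positive matrices. If $\widehat{\varphi}=0$ then in particular $\widehat{\varphi}(1)=\lim_N\frac{1}{N}\sum_{n=1}^N V_n=0$, so by Lemma \ref{outer:lem5} some subsequence $V_{n_k}\to 0$ and hence $\|V_{n_k}\|\to 0$. But positivity of $\varphi^n$ gives $\|\varphi^n(1)\|=\|\varphi^n\|$, and since $\|\cdot\|$ and $\|\cdot\|_\beta$ are equivalent norms on the finite-dimensional space $\mathscr{B}(\mathcal{A})$, there is $c>0$ with $\|V_n\|=\|\varphi^n\|/\|\varphi^n\|_\beta\ge c$ for all large $n$, contradicting $\|V_{n_k}\|\to 0$. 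Therefore $\widehat{\varphi}\neq 0$.

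The main obstacle is the middle step: the bookkeeping of the competing binomial and exponential ratios across all Jordan blocks, and verifying that only the unimodular top-corner contributions survive, so that the Cesàro limit can be controlled entry by entry. The positivity and non-vanishing steps are comparatively routine given Lemma \ref{outer:lem5}, the identity $\|\varphi^n\|=\|\varphi^n(1)\|$, and the equivalence of $\|\cdot\|$ with $\|\cdot\|_\beta$ in finite dimensions.
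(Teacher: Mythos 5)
Your proposal is correct and follows essentially the same route as the paper: the convergence argument via the Jordan form (your entrywise analysis of $T^n/a_n$ is the same computation as the paper's case analysis on Jordan basis vectors, just without first normalizing to $r(\varphi)=1$), the non-vanishing argument via Lemma \ref{outer:lem5} together with $\lVert\varphi^n(1)\rVert=\lVert\varphi^n\rVert$ and norm equivalence, and positivity by closedness of the positive/CP cones all match the paper's proof.
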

\begin{proof}
	Recall from \eqref{outer:eq4} that
	$\widehat{\varphi}=\lim\limits_{N\to\infty}\frac{1}{N}\sum\limits_{n=1}^N\frac{\varphi^n}
	{\lVert\varphi^n\rVert_{\beta}}$ where $\beta$ is a Jordan basis of
	$\varphi$. To prove  the well-definedness of $\widehat{\varphi} $,
	that is, the convergence of the above limit, we first consider the
	case $r(\varphi)=1$.
	
	Now, since $\mathcal{A}$ is finite dimensional, it is enough to
	ensure  pointwise convergence on Jordan basis vectors. Let $x \in
	\beta$ be any Jordan basis vector. Then $x$ is a generalized
	eigenvector. Suppose it is of rank $k$  corresponding to eigenvalue
	$\lambda$, that is, $(\varphi-\lambda)^{k}x=0$ and $(\varphi
	-\lambda )^{k-1}x\neq 0$. Then, there is a Jordan chain $\{x_{k-1},
	x_{k-2}, \ldots, x_0\}\subseteq \beta$ where
	$x_j=(\varphi-\lambda)^jx$ for all $0\leq j\leq (k-1) $.  Observe
	that, for any $n\in \mathbb{N}$,
	\begin{equation}\label{outer:eq11}
		\varphi^n(x)=\varphi^n(x_0)=\sum\limits_{j=0}^{k-1}
		\binom{n}{j}\lambda^{n-j}x_j
	\end{equation}
	and there is $N_0\in\mathbb{N}$ such that $\|\varphi^n\|_{\beta}=
	\binom{n}{d_{\varphi}-1}$ for all $n\ge N_0$. Note that for
	$|\lambda|=1$,  $d_{\varphi,\lambda}\leq d_{\varphi}$, by the
	definition of $d_{\varphi }.$  Thus cardinality of $\{x_0,\dots,
	x_{k-1}\}$ corresponding to $|\lambda|=1$ is at most $d_{\varphi}$.
	Hence, it suffices to look at the following three cases.
	
	\noindent \textbf{Case I.} First assume that $|\lambda|<1$. Then, for any $p\in\mathbb{N}$, we have
	\[\lim\limits_{n\to\infty}\frac{\binom{n}{p}}{\|\varphi^n\|_\beta}\lambda^n
	=\lim\limits_{n\to\infty}\frac{\binom{n}{p}}{\binom{n}{d_{\varphi}-1}}\lambda^n=0.\]
	Therefore, using \eqref{outer:eq11}, we infer that
	$$\lim\limits_{m\to\infty}\frac{1}{m}\sum\limits_{n=1}^m\frac{\varphi^n(x_0)}{\lVert
		\varphi^n\rVert_\beta}=\sum\limits_{j=0}^{k-1}\lim\limits_{m\to\infty}
	\frac{1}{m}\sum\limits_{n=1}^m\frac{\binom{n}{j}}{\|\varphi^n\|_\beta}\lambda^{n-j}x_j=0
	.$$ Hence, when $|\lambda| <1$, $\widehat{\varphi}(x)=0$.
	
	\noindent \textbf{Case II.} Let $|\lambda|=1$ and $k < d_{\varphi}$. Therefore, by \eqref{outer:eq11},
	$$\lim\limits_{m\to\infty}\frac{1}{m}\sum\limits_{n=1}^m\frac{\varphi^n(x_0)}
	{\lVert
		\varphi^n\rVert_\beta}=\sum\limits_{j=0}^{k-1}\lim\limits_{m\to\infty}
	\frac{1}{m}\sum\limits_{n=1}^m\frac{\binom{n}{j}}{\lVert
		\varphi^n\rVert_\beta}\lambda^{n-j}x_j.$$
	For $0\leq j \leq k-1<d_\varphi -1$, we observe that
	\[\lim\limits_{n\to\infty}\left|\frac{\binom{n}{j}}{\lVert \varphi^n\rVert_\beta}\lambda^{n-j}\right|
	=\lim\limits_{n\to\infty}\frac{\binom{n}{j}}{\binom{n}{d_{\varphi}-1}}=
	0 .\]
	Hence, we have
	\[\lim\limits_{m\to\infty} \frac{1}{m}\sum\limits_{n=1}^m
	\frac{\binom{n}{j}}{\binom{n}{d_{\varphi}-1}}\lambda^{n-j}x_j= 0,\]
	that is, $\widehat{\varphi}(x)=0$.
	
	\noindent\textbf{Case III.} Now let $|\lambda|=1$ and
	$k=d_{\varphi}$. Then for $0\leq j < d_{\varphi}-1$, we similarly
	have
	\[
	\lim\limits_{m\to\infty}
	\frac{1}{m}\sum\limits_{n=1}^m\frac{\binom{n}{j}}{\|\varphi^n\|_\beta}\lambda^{n-j}x_j=
	0.\] Therefore,
	\begin{equation}\label{outer:eq12}
		\widehat{\varphi}(x)=\sum\limits_{j=0}^{d_{\varphi}-1}\lim\limits_{m\to\infty}
		\frac{1}{m}\sum\limits_{n=1}^m\frac{\binom{n}{j}}{\|\varphi^n\|_\beta}\lambda^{n-j}x_j=
		\lim\limits_{m\to\infty}\frac{1}{m}\sum\limits_{n=1}^m\frac{\binom{n}{d_{\varphi}-1}}{\|\varphi^n\|_\beta}
		\lambda^{n-d_{\varphi}+1}x_{d_{\varphi }-1}.
	\end{equation}
	Now, for $\lambda \neq 1 $,
	$$\lim\limits_{m\to\infty} \frac{1}{m}\sum\limits_{n=1}^m\frac{\binom{n}{d_{\varphi}-1}}{\lVert \varphi^n\rVert_\beta}\lambda^{n-d_{\varphi}+1}=\lim\limits_{m\to\infty} \frac{1}{m}\sum\limits_{n=N_0}^m\frac{\binom{n}{d_{\varphi}-1}}{\|\varphi^n\|_\beta}\lambda^{n-d_{\varphi}+1}=\lambda^{1-d_{\varphi}}\lim\limits_{m\to\infty}\frac{1}{m}\sum\limits_{n=N_0}^m \lambda^n=0.$$
	Thus, for $|\lambda|=1$, $k=d_{\varphi}$ and $\lambda \neq 1$ we have, $\widehat{\varphi}(x)=0$.
	
	\noindent For $\lambda=1$, by \eqref{outer:eq12},
	\[ \widehat{\varphi}(x)= \lim\limits_{m\to\infty}\frac{1}{m}\sum\limits_{n=N_0}^m
	1^{n-d_{\varphi}+1}x_{d_\varphi -1}=x_{d_\varphi -1}. \] Therefore,
	we have shown the convergence of the limit at all points of $\beta$.
	This proves the existence of $\widehat{\varphi}$ when $r(\varphi
	)=1.$
	
	For general positive map $\varphi$ with $r=r(\varphi)>0$, let
	$\psi=\varphi/r$ and $\beta'$ be a Jordan basis of $\psi$.
	Then $\psi$ is a non-zero positive map on $\mathcal{A}$ of spectral
	radius $1$. From the above discussion, $\widehat{\psi}$ exists. We
	claim that $\varphi$ and $\psi$ have same maximal degeneracy index.
	From the minimal polynomials of $\varphi$ and $\psi$, we observe
	that $d_{\varphi,\lambda}=d_{\psi,\lambda/r}$ for all
	$\lambda\in\sigma(\varphi)$. Hence, we have
	\[d_{\varphi}=\max\limits_{\lambda\in\sigma(\varphi)}\{d_{\varphi,\lambda}:\, |\lambda|=r\}=\max\limits_{\lambda\in\sigma(\varphi)}\{d_{\psi,\lambda/r}:\, |\lambda|=r\}=\max\limits_{\mu\in\sigma(\psi)}\{d_{\psi,\mu}:\, |\mu|=1\}=d_\psi.\]
	For simplicity, we denote $d=d_\varphi=d_\psi$. Recall from
	\eqref{outer:norm} that there is $N_1\in\mathbb{N}$ such that
	$\|\psi^n\|_{\beta'}=\binom{n}{d-1}$ and
	$\|\varphi^n\|_{\beta}=\binom{n}{d-1}r^{n-d+1}$ for all $n\ge
	N_1$. Now, it is evident that
	\begin{equation}\label{outer:MaximalPartRelation}
		\widehat{\left(\frac{\varphi}{r}\right)}=\widehat{\psi}=\lim\limits_{N\to\infty}\frac{1}{N}\sum\limits_{n=1}^N\frac{\psi^n}{\|\psi^n\|_{\beta'}}=\lim\limits_{N\to\infty}\frac{1}{N}\sum\limits_{n=N_1}^N\frac{\varphi^n}{\binom{n}{d-1}r^n}=\frac{\widehat{\varphi}}{r^{d-1}}.
	\end{equation}
	This completes the proof of existence of $\widehat{\varphi}$ for all
	positive map $\varphi$ with $r(\varphi)>0$. Being a limit of positive maps, $\widehat{\varphi}$ is positive. If $\widehat{\varphi}=0$, then
	\[\widehat{\varphi}(1)=\lim\limits_{N\to\infty}\frac{1}{N}\sum\limits_{n=1}^N\frac{\varphi^n(1)}{\lVert\varphi^n\rVert}_{\beta}=0.\]
	By  Lemma \ref{outer:lem5}, there exists a subsequence
	$\frac{\varphi^{n_k}(1)}{\lVert\varphi^{n_k}\rVert}_{\beta}$
	converging to $0$ in norm of $\mathcal{A}$. But since
	$\varphi^{n_k}$ is a positive map and $\lVert\cdot\rVert_{\beta}$ is
	equivalent to $\lVert\cdot\rVert$, there is some $c>0$ such that
	$$\lVert\varphi^{n_k}(1)\rVert=\lVert\varphi^{n_k}\rVert\ge c\lVert\varphi^{n_k}\rVert_{\beta}.$$
	This contradicts the fact that
	$\frac{\varphi^{n_k}(1)}{\lVert\varphi^{n_k}\rVert_{\beta}}\to0$.
	Therefore, $\widehat{\varphi}$ is a well defined, non-zero, positive
	map as needed. From the definition of $\widehat{\varphi }$, it is clear
	that if $\varphi $ is CP, so is $\widehat{\varphi } .$
\end{proof}
\begin{rem}\label{outer:note1}
	From the proof above, it is clear that, if $x$ is a generalized
	eigenvector of $\varphi $ corresponding to the eigenvalue $r(\varphi)$, then
	\begin{equation} \widehat{\varphi}(x)= (\varphi-r(\varphi))^{d_\varphi-1}x.
	\end{equation} On the other hand, if $x$ is any other generalized eigenvector, then
	$\widehat{\varphi } (x)=0.$
	
\end{rem}

\begin{thm}\label{outer:thm7}
	Let $\varphi$ be a non-zero positive map on a finite dimensional
	$C^*$-algebra. Then the spectral radius  $r(\varphi)$ belongs to the
	maximal spectrum of $\varphi$.
\end{thm}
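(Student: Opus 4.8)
The plan is to read off the theorem as a direct consequence of the structure of the maximal part $\widehat{\varphi}$, which has already been worked out in Proposition \ref{outer:prop3} and Remark \ref{outer:note1}. The case $r(\varphi)=0$ is immediate: then $\varphi$ is nilpotent, $\sigma(\varphi)=\{0\}$, so the only eigenvalue of modulus $r(\varphi)$ is $0$ and it trivially realizes the maximal degeneracy index $d_\varphi=d_{\varphi,0}$, placing $r(\varphi)=0$ in the maximal spectrum. I would therefore assume $r(\varphi)>0$, so that the maximal part $\widehat{\varphi}$ of Definition \eqref{outer:eq4} is defined and, by Proposition \ref{outer:prop3}, is a \emph{non-zero} positive map.

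The core step is to combine this non-vanishing with the explicit action of $\widehat{\varphi}$ on a Jordan basis. By Remark \ref{outer:note1}, $\widehat{\varphi}$ annihilates every generalized eigenvector that is not associated to the eigenvalue $r(\varphi)$, while a generalized eigenvector $x$ associated to $r(\varphi)$ is sent to $(\varphi-r(\varphi))^{d_\varphi-1}x$. Since a Jordan basis spans the algebra and $\widehat{\varphi}\neq 0$, there must exist a generalized eigenvector $x$ associated to $r(\varphi)$ with $(\varphi-r(\varphi))^{d_\varphi-1}x\neq 0$. In particular $r(\varphi)$ is a genuine eigenvalue (recovering, in this finite-dimensional setting, the conclusion of Theorem \ref{outer:thm1}), and the non-vanishing of $(\varphi-r(\varphi))^{d_\varphi-1}x$ forces $x$ to have rank at least $d_\varphi$, so the largest Jordan block at $r(\varphi)$ satisfies $d_{\varphi,r(\varphi)}\geq d_\varphi$. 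On the other hand, because $|r(\varphi)|=r(\varphi)$, the very definition of $d_\varphi$ as $\max\{d_{\varphi,\lambda}:|\lambda|=r(\varphi)\}$ gives $d_{\varphi,r(\varphi)}\leq d_\varphi$. Hence $d_{\varphi,r(\varphi)}=d_\varphi$, and $r(\varphi)$ meets both requirements of Definition \ref{maximal spectrum}, i.e.\ it lies in the maximal spectrum.

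The substantive content is already carried by Proposition \ref{outer:prop3} and Remark \ref{outer:note1}, so what remains is a short logical deduction rather than a hard computation. The one conceptual point I would emphasize, and the only place where positivity is genuinely used, is \emph{why} the survival of $\widehat{\varphi}$ pins the maximal degeneracy onto the positive real eigenvalue $r(\varphi)$ rather than some other spectral value of the same modulus: the Cesàro averaging defining $\widehat{\varphi}$ annihilates the maximal Jordan blocks sitting at eigenvalues $\lambda$ with $|\lambda|=r(\varphi)$ but $\lambda\neq r(\varphi)$, since the phases $\lambda^{\,n}$ oscillate and average to zero (Case III of the proof of Proposition \ref{outer:prop3}), whereas the block at $\lambda=r(\varphi)$ is preserved. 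Positivity enters precisely through Proposition \ref{outer:prop3}, where $\widehat{\varphi}\neq 0$ is deduced from $\widehat{\varphi}(1)\neq 0$; thus the only anticipated care is to invoke that non-vanishing correctly, as a general (non-positive) operator could have its maximal degeneracy carried entirely by oscillating eigenvalues with $r(\varphi)\notin\sigma(\varphi)$.
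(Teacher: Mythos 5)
Your proposal is correct and follows essentially the same route as the paper: the case $r(\varphi)=0$ is dismissed immediately, and for $r(\varphi)>0$ the non-vanishing of $\widehat{\varphi}$ from Proposition \ref{outer:prop3} is combined with the Jordan-basis description in Remark \ref{outer:note1} to force $d_{\varphi,r(\varphi)}=d_\varphi$. Your write-up merely spells out in more detail the deduction that the paper compresses into one sentence.
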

\begin{proof}
	If $r(\varphi ) =0$, there is nothing to prove. Observe from Remark
	\ref{outer:note1} that  $\widehat{\varphi}$ is non-zero only when
	$r(\varphi)$ is an element of the maximal spectrum of $\varphi$. Since
	$\widehat{\varphi}$ is non-zero by Proposition \ref{outer:prop3},
	$r(\varphi)$ belongs to the maximal spectrum of $\varphi$.
\end{proof}

Now, we explore some useful properties of the maximal part of
$\varphi$ in the following theorem. It tells us as to how to
explicitly obtain an eigenvector with spectral radius as the
eigenvalue.
\begin{thm}\label{outer:thm6}
	Let $\varphi$ be a positive map on a  finite dimensional
	$C^*$-algebra $\mathcal{A}$ with $r(\varphi)>0$. Then the maximal
	part $\widehat{\varphi}$ is a non-zero positive map  satisfying the
	following properties:
	\begin{enumerate}
		\item $\varphi\widehat{\varphi}=\widehat{\varphi}\varphi=r(\varphi )\widehat{\varphi}$.
		\item $\widehat{\varphi}(1)$ is a positive eigenvector of $\varphi$ corresponding to the eigenvalue $r(\varphi ).$
		\item There are two possibilities with $\widehat{\varphi}$ which are:
		$$\widehat{\varphi}^2=\widehat{\varphi}\iff d_\varphi=1$$
		and
		\[\widehat{\varphi}^2=0\iff
		d_\varphi>1.\]
		\item If $\widehat{\varphi}^2=\widehat{\varphi}$, then for any element $x$,
		$\varphi(x)=r(\varphi )x$ if and only if $\widehat{\varphi }(x)=x.$
	\end{enumerate}
\end{thm}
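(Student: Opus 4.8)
The plan is to reduce every assertion to the explicit description of $\widehat{\varphi}$ on generalized eigenvectors recorded in Remark \ref{outer:note1}: writing $r=r(\varphi)$ and $d=d_\varphi$, the map $\widehat{\varphi}$ sends a generalized eigenvector $x$ of eigenvalue $r$ to $(\varphi-r)^{d-1}x$ and annihilates every generalized eigenvector belonging to any other eigenvalue. Since a Jordan basis of $\varphi$ spans $\mathcal{A}$, it suffices to verify each identity on such vectors. The single structural observation driving the argument is that the range of $\widehat{\varphi}$ lies inside the honest eigenspace $\ker(\varphi-r)$: if $x$ is a generalized eigenvector of $r$ of rank $k$, then $(\varphi-r)^{d-1}x$ vanishes when $k<d$ and is a genuine eigenvector when $k=d$ (using $(\varphi-r)^{d}x=0$), while $\widehat{\varphi}$ kills everything else. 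Because $r$ lies in the maximal spectrum by Theorem \ref{outer:thm7}, we have $d_{\varphi,r}=d$, so chains of length exactly $d$ occur and this range is non-trivial, consistent with $\widehat{\varphi}\neq 0$ from Proposition \ref{outer:prop3}.

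For (1) I would first note that $\varphi$ commutes with $\widehat{\varphi}$ for free: each partial average $\frac1N\sum_{n=1}^N \varphi^n/\lVert\varphi^n\rVert_\beta$ is a polynomial in $\varphi$, hence commutes with $\varphi$, and commutativity passes to the limit; thus $\varphi\widehat{\varphi}=\widehat{\varphi}\varphi$. It then remains to prove $\varphi\widehat{\varphi}=r\widehat{\varphi}$, which is immediate from the range observation, since $\varphi$ acts as the scalar $r$ on $\ker(\varphi-r)\supseteq\mathrm{ran}(\widehat{\varphi})$. Statement (2) is then a one-line consequence: $\widehat{\varphi}$ is positive and non-zero by Proposition \ref{outer:prop3}, so $\lVert\widehat{\varphi}(1)\rVert=\lVert\widehat{\varphi}\rVert>0$ gives $\widehat{\varphi}(1)\geq 0$ with $\widehat{\varphi}(1)\neq 0$, and evaluating $\varphi\widehat{\varphi}=r\widehat{\varphi}$ at $1$ shows $\varphi(\widehat{\varphi}(1))=r\,\widehat{\varphi}(1)$.

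For (3) I would evaluate $\widehat{\varphi}^2$ on a Jordan basis. On vectors outside the generalized eigenspace of $r$ both $\widehat{\varphi}$ and $\widehat{\varphi}^2$ vanish, so only the range of $\widehat{\varphi}$, consisting of honest $r$-eigenvectors (rank-$1$ generalized eigenvectors), matters. Applying $\widehat{\varphi}=(\varphi-r)^{d-1}$ to such a $y$ returns $y$ when $d=1$ and returns $0$ when $d>1$ (since $(\varphi-r)y=0$). Hence $\widehat{\varphi}^2=\widehat{\varphi}$ if $d=1$ and $\widehat{\varphi}^2=0$ if $d>1$; the two converses follow because $\widehat{\varphi}\neq 0$ rules out the wrong alternative. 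For (4) the hypothesis $\widehat{\varphi}^2=\widehat{\varphi}$ is equivalent to $d=1$, in which case $\widehat{\varphi}$ restricts to the identity on $\ker(\varphi-r)$ and to $0$ on the sum of the remaining generalized eigenspaces, i.e. it is the spectral projection onto $\ker(\varphi-r)$. Thus $\widehat{\varphi}(x)=x$ iff $x\in\ker(\varphi-r)$ iff $\varphi(x)=rx$, where $\varphi(x)=rx\Rightarrow\widehat{\varphi}(x)=x$ uses $d=1$ in Remark \ref{outer:note1} and the reverse direction uses (1).

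The computations are essentially bookkeeping once Remark \ref{outer:note1} is available, so I do not expect a serious obstacle. The only points demanding care are keeping the rank $k$ of a generalized eigenvector distinct from the maximal degeneracy index $d$ (the range observation hinges on $k\le d$ and on chains of length exactly $d$ existing, which is guaranteed by Theorem \ref{outer:thm7}), and invoking $\widehat{\varphi}\neq 0$ at precisely the two places where the converses in (3) are established.
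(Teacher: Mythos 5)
Your proposal is correct and follows essentially the same route as the paper: both arguments reduce everything to the Jordan-basis description of $\widehat{\varphi}$ in Remark \ref{outer:note1} (the paper phrases it as $[\widehat{\varphi}]_\beta=[\oplus_k N_d^{d-1}]\oplus 0$ and computes products of Jordan blocks, while you phrase it as $\mathrm{ran}(\widehat{\varphi})\subseteq\ker(\varphi-r)$ plus commutativity of $\varphi$ with limits of polynomials in $\varphi$ — the same computation in different clothing). Your handling of the reverse implication in (4) via the spectral-projection description is a slight shortcut over the paper's re-evaluation of the Ces\`aro average on $\varphi^n(x)=r^nx$, but it rests on the identical facts, and your care about $k\le d=d_{\varphi,r}$ via Theorem \ref{outer:thm7} matches what the paper implicitly uses.
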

\begin{proof}
	For notational simplicity, we take $d=
	d_{\varphi }$.
	\begin{enumerate}
		\item  Recall from the equation \eqref{outer:MaximalPartRelation}  that $\widehat{(\varphi/r)}=\widehat{\varphi}/r^{d-1}$.
		Hence, without loss of generality, we assume $r(\varphi)=1$ and show
		that
		$\varphi\widehat{\varphi}=\widehat{\varphi}\varphi=\widehat{\varphi}$.
		From Theorem \ref{outer:thm7}, we know that $1$ belongs to the
		maximal spectrum of $\varphi$.   Since $\beta$ is a Jordan basis of $\varphi$,
		$[\varphi]_\beta$ is of the form
		\[[\varphi]_\beta=  [\oplus _k(I+N_d) ]\oplus A\]
		where $N_d$ is the $d\times d$ Jordan nilpotent matrix of nilpotency
		$d$ so that $(I+N_d) $ is the largest Jordan block of $\varphi$
		corresponding to eigenvalue $1$, and $k$ indicates its multiplicity
		in the Jordan form of $\varphi$, and $A$ consists of the remaining
		Jordan blocks. Then, from Remark \ref{outer:note1}, we get that
		\[[\widehat{\varphi}]_\beta= \left[\oplus _kN_d^{d-1}\right]\oplus 0.\]
		Since $(I+N_d)N_d^{d-1}=N_d^{d-1}(I+N_d)=N_d^{d-1}$, we have
		$\varphi\widehat{\varphi}=\widehat{\varphi}\varphi=\widehat{\varphi}$.
		
		\item Since $\widehat{\varphi}$ is a non-zero
		positive map, $\widehat{\varphi}(1)$ is non-zero, positive element
		of $\mathcal{A}$. From $(1)$, we obtain
		$\varphi(\widehat{\varphi}(1))=r(\varphi )\widehat{\varphi}(1)$.
		\item We observe that
		\[\widehat{\varphi}^2=\widehat{\varphi}\iff N_{d}^{{d}-1}N_{d}^{{d}-1}=N_{d}^{{d}-1}\iff {d}=1 \text{ and}\]
		
		\[\widehat{\varphi}^2=0\iff N_{d}^{{d}-1}N_{d}^{{d}-1}=0\iff {d}>1.\]
		\item If $\widehat{\varphi}(x)=x,$ then
		\[\varphi(x)=\varphi\widehat{\varphi}(x)=r\,\widehat{\varphi}(x)=r\,x\]
		If $\varphi (x) =rx$, then $\varphi^n(x)=r^n\,x$ for all $n$. Since
		$\widehat{\varphi}^2=\widehat{\varphi}$, it follows from (3) that
		$d=1$. Thus,
		$\lVert\varphi^n\rVert_\beta=\binom{n}{d_\varphi-1}r^{n-d+1}=r^n$ for all but finitely many $n$'s.
		Therefore,
		\[\widehat{\varphi}(x)=\lim\limits_{N\to\infty}\frac{1}{N}\sum\limits_{n=1}^N\frac{\varphi^n(x)}
		{\lVert\varphi^n\rVert_\beta}=x.\]
	\end{enumerate}
\end{proof}

Theorem \ref{outer:thm7} and part (2) of Theorem \ref{outer:thm6} provide an alternate treatment to the Perron-Frobenius theorem.

\begin{rem}\label{outer:rem1}
	If $\varphi$ is an irreducible positive map on a finite dimensional
	$C^*$-algebra, then Evan and H\o egh-Krohn (\cite{EvaDE:HoeR78}) has proved the equality $r(\varphi)=\sup\limits_{x\ge0}\inf\{s>0:\,\varphi(x)\le s\,x \}$, which in particular implies that $r(\varphi)>0$. Therefore, we can define its maximal part. Theorem \ref{outer:thm10} asserts that the
	eigenspace corresponding to eigenvalue  $r(\varphi )$ is of
	dimension $1$ and is spanned by a strictly positive element
	$L\in\mathcal{A}$. Since by Theorem \ref{outer:thm6},
	$\widehat{\varphi}(1)$ is a positive eigenvector of $\varphi$
	corresponding to $r$, without loss of generality, we can assume that
	$L=\widehat{\varphi}(1)$. Also, by Theorem
	\ref{outer:thm6}, it is evident that $\widehat{\varphi}(X)$ belongs to the eigenspace of $\varphi$ corresponding to $r(\varphi)$ for all
	$X\in\mathcal{A}$. Therefore, there exists a state $\psi$ on
	$\mathcal{A}$ such that $\widehat{\varphi}(X)=\psi(X)\, L$ for all
	$X\in\mathcal{A}$, with  $\varphi(L)=r\,L$.
\end{rem}

\section{Algebra generated by Choi-Kraus coefficients}\label{outer:sec:algebra}

Given a $p$-tuple $(A_1, A_2, \ldots , A_p)$ of $m\times m$ matrices, computing a basis for the algebra generated by them in $M_m$ is an important problem in linear algebra with various practical applications. Throughout this section, the ``algebra generated by'' a tuple refers to the associative algebra over $\mathbb{C}$, that is, the complex linear span of all finite words formed by these matrices. We emphasize that this algebra is not generated in the $C^*$-sense: it need not be self-adjoint and may be non-unital. In contrast, the completely positive maps act on the full matrix algebra $M_m$, which is naturally a unital $C^*$-algebra. In \cite{PasJE19}, Pascoe shows
that this basis computation can be done by looking at the vectorization of a certain
matrix. The $p$-tuple can be considered as the Choi-Kraus
coefficients of the CP map $\tau $ on $M_m$ defined by
\[\tau (X) = \sum _{i=1}^pA_i^*XA_i, \quad X\in M_m.\]
Recasting the work of Pascoe and going further, we observe that the
elements of the algebra generated by these coefficients can be
obtained as Choi-Kraus coefficients of certain other CP maps
constructed out of $\tau $. There are many ways we can do this. We also look at irreducibility of completely positive maps, in light of the study of the maximal part done in the previous section.

As observed in Proposition \ref{outer:irreducible}, the irreducibility
property of a CP map on
$M_m$ is characterised in terms of the algebra
generated by its Choi coefficients.  For this purpose, we want to study the algebra
generated by some family of matrices in terms of the coefficient
space and Choi rank of some associated CP maps.

\begin{thm}\label{outer:thm12}
	Let $\mathcal{S}=\{A_1,\ldots,A_p\}$ be a $p$-tuple  of $m\times m$
	matrices for some $p\geq 1$ and let  $\langle\mathcal{S}\rangle _0$
	be the unital sub-algebra of $M_m$ generated by $A_i$'s and let
	$\langle\mathcal{S}\rangle _1$ be the (not-necessarily unital)
	algebra generated by $A_i$'s. Let $\tau (X)=\sum_{i=1}^p A_i^*
	XA_i$ be the CP map associated with $\mathcal{S}$. Let $\{a_n:
	n\geq 0\}$ be a sequence of strictly positive scalars such that
	$\sum _{n=0}^\infty a_n\tau ^n$ converges in norm.  Take
	$$\sigma _0= \sum _{n=0}^\infty a_n\tau ^n,~~\sigma _1= \sum
	_{n=1}^\infty a_n\tau ^n.$$ Then for $j=0,1$, the coefficient
	space of $\sigma _j$ is equal to $\langle\mathcal{S}\rangle _j$.
	Consequently, the dimension of $\langle\mathcal{S}\rangle _j$ is
	equal to the Choi rank of $\sigma _j$ and $A\in
	\langle\mathcal{S}\rangle _j$ iff there exists a scalar $q>0$  such
	that the map $\sigma _j- q\alpha _A$ is CP.

\end{thm}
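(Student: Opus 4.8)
The plan is to reduce the whole statement to a single identity about the Choi--Kraus coefficient space of $\sigma_j$, and then read off both consequences from the structural results already established. First I would record the iteration formula (computed in the excerpt) $\tau^n(X)=\sum_I a_I^* X a_I$, where $I=(i_1,\dots,i_n)$ runs over all multi-indices and $a_I=A_{i_1}\cdots A_{i_n}$. Reading off the Choi--Kraus coefficients gives $\mathcal{M}_{\tau^n}=W_n$, where $W_n:=\operatorname{span}\{A_{i_1}\cdots A_{i_n}\}$ is the span of all products of length exactly $n$ (with $W_0=\mathbb{C}I$). By the definition of the generated algebra, $\langle\mathcal{S}\rangle_0=\sum_{n\ge 0}W_n$ and $\langle\mathcal{S}\rangle_1=\sum_{n\ge 1}W_n$, each a finite sum since $M_m$ is finite dimensional. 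Thus the entire first claim reduces to the identity $\mathcal{M}_{\sigma_j}=\sum_n W_n$, the range of $n$ being $n\ge 0$ for $j=0$ and $n\ge 1$ for $j=1$.

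Before proving this I would check that $\sigma_j$ is genuinely a CP map: it is a norm-convergent sum of the CP maps $a_n\tau^n$ with strictly positive coefficients, so its Choi matrix is a norm limit of positive semidefinite matrices and hence positive semidefinite. The easy containment $\langle\mathcal{S}\rangle_j\subseteq\mathcal{M}_{\sigma_j}$ is then immediate by domination: for each relevant $n$, the difference $\sigma_j-a_n\tau^n$ is again a positively weighted convergent sum of CP maps and therefore CP, so $a_n\tau^n$ is dominated by $\sigma_j$, whence $W_n=\mathcal{M}_{\tau^n}\subseteq\mathcal{M}_{\sigma_j}$ and summing gives the containment.

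The reverse containment is the crux, and here I would pass to Choi matrices. Since $\tau\mapsto C_\tau$ is linear and continuous, $\overline{C_{\sigma_j}}=\sum_n a_n\,\overline{C_{\tau^n}}$, a norm-convergent sum of positive semidefinite matrices with strictly positive coefficients $a_n$. For such a sum the quadratic form splits as $\langle\overline{C_{\sigma_j}}\,x,x\rangle=\sum_n a_n\langle\overline{C_{\tau^n}}\,x,x\rangle$ with all summands nonnegative, so it vanishes exactly when every $\overline{C_{\tau^n}}\,x=0$; therefore $\ker\overline{C_{\sigma_j}}=\bigcap_n\ker\overline{C_{\tau^n}}$, and taking orthogonal complements in the finite-dimensional space $\mathbb{C}^{m^2}$ yields $\operatorname{Range}\overline{C_{\sigma_j}}=\sum_n\operatorname{Range}\overline{C_{\tau^n}}$. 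Applying the vectorization correspondence of Proposition \ref{outer:choi_matrix}, which identifies the coefficient space with the range of the conjugated Choi matrix, term by term then gives $\mathcal{M}_{\sigma_j}=\sum_n\mathcal{M}_{\tau^n}=\sum_n W_n=\langle\mathcal{S}\rangle_j$, proving the first claim.

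Finally, the two consequences drop out directly. The dimension statement is just the definition of the Choi rank as the dimension of the coefficient space, applied to $\sigma_j$ together with the identity just proved. For the last assertion I would apply Proposition \ref{outer:prop2} to the CP map $\sigma_j$: an element $A$ lies in $\mathcal{M}_{\sigma_j}=\langle\mathcal{S}\rangle_j$ iff $q\sigma_j-\alpha_A$ is CP for some $q>0$, which after rescaling the positive scalar is precisely the stated condition that $\sigma_j-q\alpha_A$ be CP for some $q>0$. The main obstacle throughout is the rigorous handling of the infinite sum, specifically the step that the range of the convergent sum of positive Choi matrices equals the stabilizing sum of the individual ranges; this is exactly what forces the positivity-based kernel-intersection argument rather than a naive interchange of limits.
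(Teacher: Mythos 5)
Your proposal is correct, and its overall strategy coincides with the paper's: expand $\sigma_j$ as the infinite Kraus-type sum $\sum_n a_n\sum_I a_I^*Xa_I$ over all words $a_I=A_{i_1}\cdots A_{i_n}$ in the $A_i$'s and identify the coefficient space with the span of all such words, which is $\langle\mathcal{S}\rangle_j$. The difference is in how the key identification is justified. The paper simply reads the equality $\mathcal{M}_{\sigma_j}=\operatorname{span}\{a_I\}$ off the displayed expression, implicitly extending the uniqueness of the coefficient space from finite Choi--Kraus decompositions (the only case treated in its preliminaries) to a norm-convergent infinite one. You instead prove this step: the easy inclusion via the domination fact ($\sigma_j-a_n\tau^n$ is CP, so $\mathcal{M}_{\tau^n}\subseteq\mathcal{M}_{\sigma_j}$), and the reverse inclusion by passing to conjugated Choi matrices, observing that for a positively weighted norm-convergent sum of positive matrices the kernel is the intersection of the kernels, hence the range is the (stabilizing, finite-dimensional) sum of the ranges, and then invoking Proposition \ref{outer:choi_matrix}. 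This buys a rigorous treatment of exactly the point the paper elides, at the cost of a slightly longer argument; the derivation of the two consequences from Proposition \ref{outer:prop2} (with the harmless rescaling of $q$) matches the paper.
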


\begin{proof}
	Note that $\tau^0(X)=X$ and for $n\geq 1$,
	$$\tau^n(X)=\sum\limits_{i_1,\ldots,i_n}(A_{i_1}A_{i_2}\cdots
	A_{i_n})^*X(A_{i_1}A_{i_2}\cdots A_{i_n}).$$ Therefore, we have
	\begin{equation}\label{outer:eq5}
		\sigma_0(X)=\sum\limits_{n=0}^{\infty
		}a_n{\sum\limits_{i_1,\ldots,i_n}}(A_{i_1}A_{i_2}\cdots
		A_{i_n})^*X(A_{i_1}A_{i_2}\cdots A_{i_n}).
	\end{equation}
	Let $\mathcal{M}_{0}$ be the coefficient space of $\sigma_0$.
	Then from  expression \eqref{outer:eq5}, it follows that
	$\mathcal{M}_{0}=\text{span}\{I, A_{i_1}A_{i_2}\cdots A_{i_n}:\,
	i_1,\ldots,i_n\in\{1,\ldots, p\}, n\geq 1 \}$ which is nothing but
	the algebra $\langle\mathcal{S}\rangle_{0}$. Therefore, we can
	conclude that the unital sub-algebra of $M_m$ generated by $A_i$'s
	is same as the coefficient space of $\sigma_ 0$. Similar
	argument works for $\sigma _1$. The remaining statements are clear
	from Proposition \ref{outer:prop2}.
\end{proof}

This theorem provides an abundant supply of CP maps which can be
used to determine the algebras generated by Choi-Kraus coefficients
of CP maps.  We single out two particularly useful examples. In a
matrix form, the CP map $\gamma _0$ was considered by Pascoe
\cite{PasJE19}.

\begin{cor}\label{outer:cor}
	Consider the set up of Theorem \ref{outer:thm12}.  Suppose $b>0$ is
	a scalar such that $br(\tau )<1$ and let $d>0$. Then
	$$\gamma _0:= (1-b\tau )^{-1}, ~~ \eta _0:=\exp (d\tau) $$
	are CP maps with their coefficient spaces equal to the unital
	algebra $\langle\mathcal{S}\rangle _0$ and
	$$\gamma _1= \gamma _0-1, ~~ \eta _1= \eta _0-1$$
	are CP maps with their coefficient spaces equal to the algebra
	$\langle\mathcal{S}\rangle _1$.
\end{cor}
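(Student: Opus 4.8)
The plan is to recognize both $\gamma_0$ and $\eta_0$ as concrete instances of the series $\sigma_0=\sum_{n=0}^\infty a_n\tau^n$ from Theorem \ref{outer:thm12}, with strictly positive scalar coefficients $a_n$ and norm convergence, so that the corollary follows by direct specialization.

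First I would handle $\gamma_0$. Since $r(b\tau)=b\,r(\tau)<1$ by hypothesis, the Neumann series $\sum_{n=0}^\infty (b\tau)^n$ converges in the operator norm of $\mathscr{B}(M_m)$ and sums to $(1-b\tau)^{-1}=\gamma_0$. Thus $\gamma_0=\sum_{n=0}^\infty b^n\tau^n$ with coefficients $a_n=b^n>0$, which are strictly positive. Each $\tau^n$ is CP and the cone of CP maps is norm-closed, so $\gamma_0$ is CP. Applying Theorem \ref{outer:thm12} with this coefficient sequence gives $\sigma_0=\gamma_0$, whence the coefficient space of $\gamma_0$ equals $\langle\mathcal{S}\rangle_0$. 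Since the zeroth coefficient is $a_0=b^0=1$ and the $1$ in $\gamma_0-1$ is the identity map $\tau^0$, we have $\gamma_1=\gamma_0-1=\sum_{n=1}^\infty b^n\tau^n=\sigma_1$, and the same theorem identifies its coefficient space with $\langle\mathcal{S}\rangle_1$.

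Next I would treat $\eta_0$ identically. The exponential series $\exp(d\tau)=\sum_{n=0}^\infty \tfrac{d^n}{n!}\tau^n$ converges in norm for any bounded operator and any $d>0$, and its coefficients $a_n=d^n/n!>0$ are strictly positive, so $\eta_0$ is CP and equals $\sigma_0$ for this choice. Theorem \ref{outer:thm12} then yields coefficient space $\langle\mathcal{S}\rangle_0$. Again $a_0=1$, so $\eta_1=\eta_0-1=\sum_{n=1}^\infty \tfrac{d^n}{n!}\tau^n=\sigma_1$, with coefficient space $\langle\mathcal{S}\rangle_1$.

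There is no serious obstacle here; the corollary is a direct specialization of Theorem \ref{outer:thm12}. The only points requiring care, both routine, are verifying norm convergence (the Neumann series needs precisely the spectral condition $b\,r(\tau)<1$, while the exponential series converges unconditionally) and observing that in both cases the zeroth coefficient equals $1$, so that subtracting the identity map produces exactly the series $\sigma_1$ ranging over $n\geq 1$.
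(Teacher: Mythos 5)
Your proposal is correct and follows exactly the paper's own argument: both proofs identify $\gamma_0=\sum_{n\ge 0}b^n\tau^n$ (convergent since $r(b\tau)<1$) and $\eta_0=\sum_{n\ge 0}\frac{d^n}{n!}\tau^n$ as instances of $\sigma_0$ from Theorem \ref{outer:thm12} with strictly positive coefficients, and then read off the coefficient spaces. Your additional remarks about the zeroth coefficient being $1$ and the norm-closedness of the CP cone are routine details that the paper leaves implicit.
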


\begin{proof}
	This is clear from Theorem \ref{outer:thm12}, by considering power
	series expansions:
	$$\gamma _0= \sum _{n=0}^\infty b^n\tau ^n, ~~\eta _0= \sum
	_{n=0}^\infty \frac{d^n}{n!}\tau ^n.$$ Note that the power series
	for $\gamma _0$ converges as $r(b\tau)<1.$
\end{proof}

\begin{rem}
	Theorem \ref{outer:thm12} and Corollary \ref{outer:cor} are stated
	for CP maps in finite dimensions. However, it is clear that they can
	be extended to normal completely positive maps on
	$\mathscr{B}(\mathcal{H})$, with separable infinite dimensional
	Hilbert space $\mathcal{H}$, where the power series under
	consideration are assumed to be convergent in strong operator
	topology.
\end{rem}

\begin{rem} If one is interested in analyzing the unital or
	non-unital $*$-algebra generated by a tuple $A_1,\ldots , A_p$ of
	matrices, one can consider the CP map $X\mapsto \sum
	_{i=1}^p(A_i^*XA_i+A_iXA_i^*)$ and the methods of Theorem
	\ref{outer:thm12} are applicable.
\end{rem}

Here in Theorem \ref{outer:thm12},  we use power series with
infinitely many terms and examples in Corollary \ref{outer:cor}
could be useful due to computational advantages. However, due to
dimension considerations, it is always enough to consider finitely
many terms. This can be seen as follows.

Once again, consider the set up of Theorem \ref{outer:thm12}. For
$k\geq 0$, let $\mathcal{M}_k$ be the coefficient space of $\sum_{n=0}^ka_n\tau ^n.$ Then, it is clear that $\mathcal{M}_0\subseteq
\mathcal{M}_1\subseteq \mathcal{M}_2\subseteq \cdots $ and all of
them are subspaces of $M_m$. We claim that this increasing sequence
stabilizes after a few steps.  Suppose
$\mathcal{M}_k=\mathcal{M}_{k+1}$ for some $k$. Then since any
$(k+1)$-length product $A_{j_1}A_{j_2}\cdots A_{j_{k+1}}$ is in the
coefficient space $\mathcal{M}_{k+1}$, we get
$$A_{j_1}A_{j_2}\cdots A_{j_{k+1}}\in
\mbox{span}\{ A_{i_1}A_{i_2}\cdots A_{i_r}: 0\leq r\leq k, 1\leq
i_1, \ldots i_r\leq p\}.$$
Then, it follows that any $(k+2)$-length
product $A_{j_1}A_{j_2}\cdots A_{j_{k+2}}$ is a linear combination
of $(k+1)$-length products of $A_j$'s. In other words,
$\mathcal{M}_{k+2}= \mathcal{M}_{k+1}.$ By induction,
$\mathcal{M}_k=\mathcal{M}_l$ for all $k\leq l.$ If
$\mathcal{M}_j\subsetneqq \mathcal{M}_{j+1}$, then their dimensions
have to differ by at least $1$ and all of them are subspaces of $M_m$,
which has dimension $m^2$, implies that the stabilization has to
take place by index $m^2$. Consequently, we have proved the
following theorem.

\begin{thm}\label{outer:thm13}
	Consider the set up of Theorem \ref{outer:thm12}. Then for $j=0,1$
	there exists natural numbers $N_j \leq m^2$ such that the coefficient space of $\sum _{n=j}^{N_j}a_n\tau ^n$ is equal to
	$\langle\mathcal{S}\rangle _j$ and the Choi rank of $\sum
	_{n=j}^{N_j}a_n\tau ^n$ is equal to the dimension of
	$\langle\mathcal{S}\rangle _j$.
\end{thm}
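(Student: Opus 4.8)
The plan is to recast the theorem as a \emph{stabilisation} statement for an increasing chain of coefficient spaces attached to the partial sums of $\sum_n a_n\tau^n$, and then to bound the stabilisation index by the ambient dimension $m^2$. For $k\geq j$ set $\sigma_j^{(k)} := \sum_{n=j}^{k} a_n\tau^n$, a finite positive combination of the CP maps $\tau^n$ and hence itself CP, and let $\mathcal{M}_k^{(j)}$ denote its coefficient space. First I would record that coefficient spaces add over sums of CP maps: concatenating a Choi-Kraus system of $\sigma'$ with one of $\sigma''$ produces a Choi-Kraus system of $\sigma'+\sigma''$, whence $\mathcal{M}_{\sigma'+\sigma''}=\mathcal{M}_{\sigma'}+\mathcal{M}_{\sigma''}$ (equivalently, via $C_{\sigma'+\sigma''}=C_{\sigma'}+C_{\sigma''}$, range-additivity of positive matrices, and Proposition \ref{outer:choi_matrix}). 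Combining this with the explicit form of $\tau^n$ from the proof of Theorem \ref{outer:thm12} and the strict positivity of each $a_n$, which prevents any term being lost, identifies $\mathcal{M}_k^{(j)}$ as exactly the span of all products $A_{i_1}\cdots A_{i_n}$ with $j\leq n\leq k$, the identity $I$ being included precisely when $j=0$. Thus the chain $\mathcal{M}_j^{(j)}\subseteq \mathcal{M}_{j+1}^{(j)}\subseteq\cdots$ is increasing with union $\langle\mathcal{S}\rangle_j$.

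The heart of the argument is a self-propagation lemma: if $\mathcal{M}_k^{(j)}=\mathcal{M}_{k+1}^{(j)}$ for some $k$, then $\mathcal{M}_{k+1}^{(j)}=\mathcal{M}_{k+2}^{(j)}$. To prove it I would take an arbitrary length-$(k+2)$ product and write it as $(A_{i_1}\cdots A_{i_{k+1}})A_{i_{k+2}}$; by the standing equality the length-$(k+1)$ factor lies in $\mathcal{M}_k^{(j)}$, hence is a linear combination of products of length at most $k$, and right multiplication by $A_{i_{k+2}}$ then lands it in the span of products of length at most $k+1$, i.e. in $\mathcal{M}_{k+1}^{(j)}$. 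Induction yields $\mathcal{M}_k^{(j)}=\mathcal{M}_l^{(j)}$ for all $l\geq k$, so that in fact $\mathcal{M}_k^{(j)}=\langle\mathcal{S}\rangle_j$ already at the first index of equality.

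It remains to bound the first stabilisation index and to read off the Choi rank. Every strict inclusion in the chain raises dimension by at least one, and each $\mathcal{M}_k^{(j)}$ is a subspace of $M_m$, which has dimension $m^2$; hence the chain becomes constant at some index $N_j\leq m^2$. For this $N_j$ the coefficient space of $\sum_{n=j}^{N_j} a_n\tau^n$ equals $\langle\mathcal{S}\rangle_j$, and since the Choi rank of a CP map is by definition the dimension of its coefficient space (equivalently the rank of its Choi matrix, Proposition \ref{outer:choi_matrix}), its Choi rank equals $\dim\langle\mathcal{S}\rangle_j$. The only step calling for genuine care is the self-propagation lemma; the one subtlety to verify en route is the additivity of coefficient spaces under sums of CP maps, which is exactly what makes the positivity of the $a_n$ matter and guarantees that $\mathcal{M}_k^{(j)}$ is the full span of the relevant products rather than a proper subspace of it.
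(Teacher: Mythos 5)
Your proposal is correct and follows essentially the same route as the paper: identify the coefficient space of each partial sum with the span of the products of bounded length, prove the self-propagation step (equality at one index forces equality at all later indices), and bound the stabilisation index by $\dim M_m=m^2$. The only addition is your explicit remark on additivity of coefficient spaces under sums of CP maps and the role of the strict positivity of the $a_n$, which the paper uses implicitly via Theorem \ref{outer:thm12}.
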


To check whether a CP map $\tau$ on $M_m$ is irreducible, one can construct a convenient CP map $\sigma_1$ as described in Theorem \ref{outer:thm12}. The combination of Theorem \ref{outer:thm12} and Proposition \ref{outer:irreducible} ensures that $\tau$ is irreducible if and only if $\sigma_1$ has a Choi rank of $m^2$.

Now, we proceed with our desired characterisation for irreducible CP
maps on  general finite dimensional $C^*$-algebras and a description
of their maximal parts.  Compared to maps on full matrix algebra (see Proposition \ref{outer:irreducible}), the situation is a bit more complicated
now. Consider a CP map $\tau $ with a specified Choi-Kraus type
decomposition. Whenever these coefficients do not admit a proper common invariant
subspace, $\tau$ is irreducible. However, as the following example illustrates, this condition is not always necessary.

\begin{eg}\label{outer:eg3}
	Let {$\mathcal{A}= M_1\oplus M_1$ be the algebra of $2\times 2$ complex diagonal matrices}, and let $\tau$ be the CP map
	defined by $\tau(X)=2\,\text{trace}(X)\,I$ on $\mathcal{A}$. We can express
	\[\tau(X)=A^*XA +B^*XB\text{ for all }X\in\mathcal{A},\]
	where $A=\begin{bmatrix}
		1 & 1 \\
		1 & 1
	\end{bmatrix}$ and $B=\begin{bmatrix}
		1 & -1\\
		1 & -1
	\end{bmatrix}$.
	Observe that $\tau$ is strictly positive, and therefore, it is irreducible. However, it exhibits a Choi-Kraus type expression involving $A$ and $B$ which share the non-trivial common invariant subspace $\text{span}\left\{\begin{pmatrix}1\\1\end{pmatrix}\right\}$. This shows that even if $A$ and $B$ share a common non-trivial invariant subspace, $\tau$ can still be irreducible when the domain is not a full matrix algebra. 
	
	Note that because $\mathcal{A} = M_1 \oplus M_1$ is commutative, a vector in $\mathbb{C}^2$ with positive entries is algebraically equivalent to a positive element in $\mathcal{A}$. For instance, the identity matrix $I = \begin{bmatrix} 1 & 0 \\ 0 & 1 \end{bmatrix}$ is an eigenvector of $\tau$ that is both a positive element in the $C^*$-algebra $\mathcal{A}$ and a matrix with non-negative entries. In a general non-commutative $C^*$-algebra, however, the concept of ``matrices with non-negative entries" is not applicable, making it essential to conceptually distinguish the underlying Hilbert space from the algebra itself.
\end{eg}

Consider a finite dimensional $C^*$-algebra $\mathcal{A}.$ Without
loss of generality,  $\mathcal{A}= M_{n_1}\oplus \dots \oplus
M_{n_d}$ for some natural numbers $n_1, n_2, \ldots , n_d$. Clearly,
$\mathcal{A}$ has a natural embedding in $M_m$ where
$m=n_1+\cdots+n_d$. Throughout, we will consider this embedding
of $\mathcal {A}$.  Now, we can canonically extend any CP map on
$\mathcal{A}$ to a CP map on $M_m$ as follows.
\begin{defn}\label{outer:def1}
	For a CP map $\tau$ on a finite dimensional $C^*$-algebra
	$\mathcal{A}=M_{n_1}\oplus\cdots\oplus M_{n_d}$, if $\iota:\mathcal{A}\hookrightarrow M_m$ is the canonical
	embedding of $\mathcal{A}$ into $M_m$ where $m=n_1+\dots+
	n_d$, $P_k$
	is the orthogonal projection of $M_m$ onto the $k^{\text{th}}$ diagonal block $M_{n_k}$ for each $k\in\{1,\dots, d\}$ and
	$\phi:M_m\to\mathcal{A}$ is defined by $\phi(X)=\sum_{k=1}^d
	P_k^* X P_k$, then the {\em canonical extension\/} of $\tau$ to $M_m$,
	denoted by $\mathbf{\widetilde{\tau}}$, is defined to be
	$\mathbf{\widetilde{\tau}=\iota\circ\tau\circ\phi}$.
\end{defn}
It is natural to expect a connection between the irreducibility of
$\tau$ and $\widetilde{\tau}$. To establish this, we need the
following lemma. Recall that a positive map is called strictly
positive if it sends all non-zero positive elements to strictly
positive elements.  The lemma tells us that for a positive matrix
having a block matrix decomposition, the diagonals can be perturbed
suitably to get a strictly positive matrix.
\begin{lem} \label{outer:lem11}
	Let $\{\psi_{ij}:M_{n_j}\to M_{n_i}|\, i,j\in\{1,\ldots,d\}\}$ be a collection of positive maps with $1+\psi_{ii}>0$ for all $i$ and $\psi_{ij}>0$ for all $i\ne j$ where $n_1,\ldots,n_d\in\mathbb{N}$. Let $X=(X_{ij})\in M_m$ be a non-zero positive (block) matrix where $X_{ij}\in M_{n_i\times n_j}$ and $m=n_1+\ldots+n_d$. Then the matrix $Y=(Y_{ij})\in M_m$ defined as
	\[Y_{ij}=
	\begin{cases}
		X_{ii}+\sum\limits_{k=1}^d\psi_{ik}(X_{kk}), & \text{ if }i=j\\
		X_{ij}, & \text{ if } i\ne j
	\end{cases}\]
	is strictly positive.
\end{lem}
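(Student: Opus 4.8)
The plan is to write $Y = X + D$, where $D$ is the block-diagonal matrix with diagonal blocks $D_{ii} = \sum_{k=1}^d \psi_{ik}(X_{kk})$, and to establish strict positivity by contradiction. First I would dispose of the easy half: since $X \geq 0$, each diagonal block $X_{kk}$ is a compression of a positive matrix and hence positive, so each $D_{ii}$ is positive (a sum of positive maps applied to positive elements). Thus $D \geq 0$ and $Y = X + D \geq 0$. The tempting shortcut is to note that the diagonal blocks $Y_{ii}$ are in fact strictly positive and conclude from there; but this is precisely the trap, since strictly positive diagonal blocks do \emph{not} force a block matrix to be positive definite. The genuine content lies in coupling the positivity of $X$ with the strict positivity of the maps $\psi_{ik}$.

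Introduce $S = \{k : X_{kk} \neq 0\}$. Because $\mathrm{Tr}(X) = \sum_k \mathrm{Tr}(X_{kk})$ and $X$ is a nonzero positive matrix, $S \neq \emptyset$. Suppose, for contradiction, that $Y$ is not strictly positive; since $Y \geq 0$, there is $v = (v_i) \neq 0$ with $\langle v, Yv\rangle = 0$, forcing both $\langle v, Xv\rangle = 0$ and $\langle v, Dv\rangle = 0$. Block-diagonality of $D$ then yields $\langle v_i, D_{ii} v_i\rangle = 0$ for every $i$. The crucial step is the chain $0 \leq \langle v_i, \psi_{ik}(X_{kk}) v_i\rangle \leq \langle v_i, D_{ii}v_i\rangle = 0$, valid for each $i$ and each $k$ (the middle inequality holds because $D_{ii} - \psi_{ik}(X_{kk})$ is a sum of positive terms). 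Hence $\langle v_i, \psi_{ik}(X_{kk}) v_i\rangle = 0$ for all $k$; and when $k \in S$ with $k \neq i$, the map $\psi_{ik}$ is strictly positive applied to the nonzero positive element $X_{kk}$, so $\psi_{ik}(X_{kk})$ is positive definite, whence $v_i = 0$.

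This settles the generic case $|S| \geq 2$ at once: every index $i$ admits some $k \in S \setminus \{i\}$, so $v_i = 0$ for all $i$, contradicting $v \neq 0$. The delicate case is $S = \{i_0\}$, where no off-diagonal map gives information about the surviving block. Here I would first deduce (taking $k = i_0$) that $v_i = 0$ for all $i \neq i_0$, so that $\langle v, Yv\rangle = \langle v_{i_0}, Y_{i_0 i_0} v_{i_0}\rangle = 0$ with $Y_{i_0 i_0} = (1 + \psi_{i_0 i_0})(X_{i_0 i_0})$; since $1 + \psi_{i_0 i_0}$ is strictly positive and $X_{i_0 i_0}$ is a nonzero positive element, $Y_{i_0 i_0}$ is positive definite, forcing $v_{i_0} = 0$ and again a contradiction. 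The main obstacle — and the reason the hypotheses separate into the off-diagonal condition $\psi_{ij} > 0$ and the diagonal condition $1 + \psi_{ii} > 0$ — is exactly this single-block case: it is the only point where strict positivity of $1 + \psi_{ii}$, rather than of some off-diagonal $\psi_{ik}$, is what rescues the argument.
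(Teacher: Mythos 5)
Your proof is correct and follows essentially the same route as the paper's: both decompose $Y=X+D$ with $D$ block-diagonal positive, observe that the off-diagonal strict positivity $\psi_{ik}>0$ settles everything unless $X$ is supported on a single diagonal block, and handle that remaining case via $(1+\psi_{i_0i_0})(X_{i_0i_0})>0$. Your kernel-vector formulation is a cosmetic variant (and lets you skip the observation that a vanishing diagonal block of a positive matrix kills the corresponding off-diagonal blocks), but the case analysis and the role of each hypothesis are the same as in the paper.
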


\begin{proof}
	Observe that, we can write $Y=X+Z$ where $Z$ is a block diagonal
	positive matrix with diagonal entries $Z_{ii}=\sum_{k=1}^d
	\psi_{ik}(X_{kk})$. If all $Z_{ii}$'s are strictly positive, then
	$Z$, hence $Y$, becomes strictly positive matrix. If some $Z_{ll}$
	is not strictly positive, then $X_{kk}=0$ for all $k\ne l$. Then $X$
	turns into a block diagonal matrix with only non-zero entry
	$X_{ll}$. Then we have
	\[
	Y_{ij}=
	\begin{cases}
		\psi_{il}(X_{ll}), & \text{ if }i=j\ne l\\
		(1+\psi_{ll})(X_{ll}), & \text{ if }i=j=l\\
		0, & \text{ if }i\ne j
	\end{cases}.
	\]
	Thus, $Y$ becomes a block diagonal matrix with strictly positive diagonal entries. Therefore, $Y$ is a strictly positive matrix in either cases.
\end{proof}
\begin{prop}\label{outer:prop8}
	Let $\tau$ be a CP map on finite dimensional $C^*$-algebra
	$\mathcal{A}=M_{n_1}\oplus\cdots\oplus M_{n_d}$  and let
	$\widetilde{\tau}$ be its canonical extension to $M_m$ where
	$m=n_1+\cdots+n_d$. Then $\tau$ is irreducible if and only if
	$\widetilde{\tau}$ is irreducible. The spectral radii of $\tau $ and
	$\widetilde{\tau }$ are equal.
\end{prop}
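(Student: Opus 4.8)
The plan is to base everything on one algebraic identity. Since $\iota$ identifies $\mathcal{A}$ with the block-diagonal matrices in $M_m$ and $\phi$ is the block-diagonal compression, we have $\phi\circ\iota=\mathrm{id}_{\mathcal{A}}$. Feeding this into $\widetilde{\tau}=\iota\circ\tau\circ\phi$ telescopes the middle factors and gives $\widetilde{\tau}^{\,n}=\iota\circ\tau^n\circ\phi$ for every $n\ge 1$. The equality of spectral radii is then immediate: both maps are positive, so their norms are attained at the identity, and using $\phi(I_m)=1_{\mathcal{A}}$ together with the fact that $\iota$ is an isometric $*$-homomorphism,
\[\lVert\widetilde{\tau}^{\,n}\rVert=\lVert\widetilde{\tau}^{\,n}(I_m)\rVert=\lVert\iota(\tau^n(1_{\mathcal{A}}))\rVert=\lVert\tau^n(1_{\mathcal{A}})\rVert=\lVert\tau^n\rVert,\]
for all $n\ge 1$; taking $n$-th roots and letting $n\to\infty$ yields $r(\widetilde{\tau})=r(\tau)$.

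For the irreducibility equivalence I would invoke the Evans--H\o egh-Krohn criterion (their Lemma 2.1): since both $\mathcal{A}$ and $M_m$ are subalgebras of $M_m$, the map $\tau$ (resp. $\widetilde{\tau}$) is irreducible if and only if $(\mathrm{id}+\tau)^{m-1}$ (resp. $(\mathrm{id}+\widetilde{\tau})^{m-1}$) is strictly positive. Using $\widetilde{\tau}^{\,n}=\iota\tau^n\phi$ and the binomial theorem I would expand
\[(\mathrm{id}_{M_m}+\widetilde{\tau})^{m-1}=\mathrm{id}_{M_m}-\iota\phi+\iota\,\eta\,\phi,\qquad \eta:=(\mathrm{id}_{\mathcal{A}}+\tau)^{m-1}.\]
Writing $\eta_{ik}\colon M_{n_k}\to M_{n_i}$ for the block components of $\eta$ and evaluating at a positive $X=(X_{ij})\in M_m$, a direct computation identifies $(\mathrm{id}_{M_m}+\widetilde{\tau})^{m-1}(X)$ with precisely the matrix $Y$ of Lemma \ref{outer:lem11}: its off-diagonal blocks are the untouched $X_{ij}$ $(i\ne j)$, while its $i$-th diagonal block equals $\sum_k\eta_{ik}(X_{kk})=X_{ii}+\sum_k\psi_{ik}(X_{kk})$ with $\psi_{ik}:=\eta_{ik}-\delta_{ik}\,\mathrm{id}$. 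Since $\eta=\mathrm{id}+(\text{CP map})$, each $\psi_{ik}$ is a positive map.

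For the forward direction I would assume $\tau$ irreducible, so that $\eta$ is strictly positive. Applying $\eta$ to the embedded generators $\iota_j(Y)$ (a non-zero positive $Y\in M_{n_j}$ placed in the $j$-th summand) shows every block component $\eta_{ij}$ is strictly positive, giving $1+\psi_{ii}=\eta_{ii}>0$ and $\psi_{ij}=\eta_{ij}>0$ for $i\ne j$ — exactly the hypotheses of Lemma \ref{outer:lem11}. That lemma then forces $(\mathrm{id}+\widetilde{\tau})^{m-1}(X)$ to be strictly positive for every non-zero positive $X$, so $(\mathrm{id}+\widetilde{\tau})^{m-1}$ is strictly positive and $\widetilde{\tau}$ is irreducible. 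For the converse I would restrict the same identity to block-diagonal inputs: for $Z\in\mathcal{A}$, the relation $\phi(\iota(Z))=Z$ gives $(\mathrm{id}+\widetilde{\tau})^{m-1}(\iota(Z))=\iota(\eta(Z))$, so if $\widetilde{\tau}$ is irreducible then $\iota(\eta(Z))$ is strictly positive for every non-zero positive $Z$; as a block-diagonal matrix is invertible exactly when each block is, $\eta(Z)=(\mathrm{id}+\tau)^{m-1}(Z)$ is strictly positive in $\mathcal{A}$, whence $\tau$ is irreducible.

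The main obstacle is the forward implication, because $\widetilde{\tau}$ sees only the diagonal blocks through $\phi$ and therefore leaves the off-diagonal part of $X$ entirely unchanged; one must argue that these surviving off-diagonal entries cannot obstruct strict positivity once the diagonal blocks have been enlarged. Lemma \ref{outer:lem11} is exactly the device that guarantees this, and the crux of the whole argument is recognizing that the block expansion of $(\mathrm{id}+\widetilde{\tau})^{m-1}$ fits its hypotheses verbatim.
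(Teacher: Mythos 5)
Your proposal is correct and follows essentially the same route as the paper: the identity $\widetilde{\tau}^{\,n}=\iota\circ\tau^n\circ\phi$ for the spectral radii, the Evans--H\o egh-Krohn criterion with the expansion $(\mathrm{id}+\widetilde{\tau})^{m-1}(X)=(X-\phi(X))+(\mathrm{id}+\tau)^{m-1}(\phi(X))$, verification of the hypotheses of Lemma \ref{outer:lem11} by feeding in the embedded generators, and restriction to block-diagonal inputs for the converse. The only differences are notational (you package the expansion as $\mathrm{id}-\iota\phi+\iota\eta\phi$).
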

\begin{proof}
	First assume that, $\widetilde{\tau}$ is irreducible. Then by Lemma
	2.1 of \cite{EvaDE:HoeR78},  $(1+\widetilde{\tau})^{m-1}$ is
	strictly positive. For any non-zero positive $X\in\mathcal{A}$, we
	infer that
	$(1+\tau)^{m-1}(X)=(1+\tau)^{m-1}(\phi(X))=(1+\widetilde{\tau})^{m-1}(\phi(X))>0$,
	proving irreducibility of $\tau$.
	
	Conversely, assume that $\tau$ is irreducible. Since $\tau$ is a
	positive map, for any $A=A_1\oplus \dots \oplus A_d\in \mathcal{A}$,
	we have $(1+\tau)^{m-1}(A)-A \geq 0$ and its $i^\text{th}$ diagonal
	block entry then must look like sum of the positive maps
	corresponding to each $j=1,\dots ,d$ that take $A_{j}\in M_{n_j}$ to
	some matrix in $M_{n_i}$. That is, there is a collection of positive
	maps $\{\psi_{ij}:M_{n_j}\to M_{n_i}|\, i,j\in\{1,\ldots,d\}\}$ such
	that
	\[((1+\tau)^{m-1}(A))_{ii}=A_{i}+\sum\limits_{j=1}^d\psi_{ij}(A_{j})\ \text{ for all }X\in\mathcal{A}.\]
	We claim that this collection of $\psi_{ij}$'s satisfy the hypothesis of Lemma \ref{outer:lem11}. Fix some $i\in \{ 1,\dots ,d \} $ arbitrarily and let $B\in M_{n_i}$ be any non-zero positive matrix. Define $A\in\mathcal{A}$ to be the block diagonal matrix whose $i^\text{th}$ diagonal block is $B$ and all other are zero. It follows from the irreducibility of $\tau$ that $(1+\tau)^{m-1}$ is strictly positive. Thus,
	we have $((1+\tau)^{m-1}(A))_{kk}>0$ for each $k$. Hence, in
	particular  \[
	((1+\tau)^{m-1}(A))_{ii} = (1+\psi_{ii})(B)>0
	\] and for $j\neq i$,
	\[
	((1+\tau)^{m-1}(A))_{jj} = \psi_{ji}(B)>0.
	\]
	As $B$ is arbitrary non-zero positive matrix in $M_{n_i}$, we
	get $1+\psi_{ii}>0$ and $\psi_{ij}>0$ for all $i\ne j$. Since $i,j$ are arbitrary, this proves the required claim. Now, let $X\in M_m$
	be non-zero positive. A simple computation shows that,
	\[ (1+\widetilde{\tau})^{m-1}(X)= (1+\iota \circ \tau\circ\phi)^{m-1}(X)=(X-\phi(X))+(1+\tau)^{m-1}(\phi(X)).\]
	Then,
	\[((1+\widetilde{\tau})^{m-1}(X))_{ij}=
	\begin{cases}
		X_{ii}+\sum\limits_{k=1}^d\psi_{ik}(X_{kk}), & \text{ if }i=j\\
		X_{ij}, & \text{ if } i\ne j
	\end{cases}.\]
	Hence, by Lemma \ref{outer:lem11}, we infer that
	$(1+\widetilde{\tau})^{m-1}(X)>0$ for all non-zero positive $X\in
	M_m$. Therefore,
	$\widetilde{\tau}$ is irreducible. To see the second part, observe
	that $\widetilde{\tau}^n=\iota\circ\tau^n\circ\phi$ for all
	$n\in\mathbb{N}$ and
	\[r(\widetilde{\tau})=\lim\limits_{n\to\infty}\lVert\widetilde{\tau}^n({1})\rVert^{1/n}=
	\lim\limits_{n\to\infty}\lVert\tau^n({1})\rVert^{1/n}=r(\tau).\]
\end{proof}

The following theorem says that if we are looking at a Choi-Kraus
decomposition $\tau (X)=\sum_{i=1}^pA_i^*XA_i$ for all $X\in \mathcal{A}$,
which satisfies also $\widetilde{\tau}(X)= \sum_{i=1}^pA_i^*XA_i$ for all $X\in M_m$, then the irreducibility of $\tau $ can be read of from
$A_i$'s. Note that this was not the case in Example \ref{outer:eg3}.

\begin{thm}\label{outer:thm20}
	Let $\tau$ be a CP map on finite dimensional $C^*$-algebra
	$\mathcal{A}=M_{n_1}\oplus\cdots\oplus M_{n_d}$ and
	$\widetilde{\tau}$ be its canonical extension to $M_m$ where
	$m=n_1+\cdots+n_d$.  Then, the following statements are equivalent:
	\begin{enumerate}
		\item $\tau$ is irreducible;
		\item $\widetilde{\tau}$ is irreducible;
		\item There exists matrices $A_1,\ldots,A_p\in M_m$ which generates $M_m$ as algebra such that $\widetilde{\tau}(X)=\sum_{i=1}^p A_i^*XA_i$ for all $X\in M_m$. In particular, $\tau(X)=\sum_{i=1}^p A_i^*XA_i$ for all
		$X\in\mathcal{A}$;
		\item There exist matrices $A_1,\ldots,A_p\in M_m$ which generates $M_m$ as algebra such
		that $\tau(X)=\sum_{i=1}^p A_i^*XA_i$ for all
		$X\in\mathcal{A}$.
	\end{enumerate}
\end{thm}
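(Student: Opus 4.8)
The plan is to establish the cycle $(1)\Leftrightarrow(2)$, $(2)\Rightarrow(3)\Rightarrow(4)$, and $(4)\Rightarrow(1)$, leaning on the two structural results already in hand. The equivalence $(1)\Leftrightarrow(2)$ is precisely Proposition \ref{outer:prop8}, so no new work is needed there. For $(2)\Rightarrow(3)$ I would note that $\widetilde{\tau}$ is a CP map on the \emph{full} matrix algebra $M_m$, so Proposition \ref{outer:irreducible} applies directly: irreducibility of $\widetilde{\tau}$ is equivalent to its Choi-Kraus coefficients generating $M_m$ as an algebra. This is independent of the chosen decomposition, since all Choi-Kraus decompositions span the common coefficient space $\mathcal{M}_{\widetilde{\tau}}$, and the algebra generated by a spanning set equals the one generated by the space. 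Fixing one decomposition $\widetilde{\tau}(X)=\sum_i A_i^*XA_i$ on $M_m$ then furnishes generating matrices $A_i$. Since $\phi(X)=X$ for block-diagonal $X\in\mathcal{A}$, we have $\widetilde{\tau}|_{\mathcal{A}}=\tau$, which yields the ``in particular'' clause $\tau(X)=\sum_i A_i^*XA_i$ for $X\in\mathcal{A}$, completing $(3)$. The implication $(3)\Rightarrow(4)$ is immediate, as $(4)$ is just the restriction assertion already contained in $(3)$.

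The one step carrying real content is $(4)\Rightarrow(1)$, and I expect this to be the main obstacle. The difficulty is that the decomposition $\sum_i A_i^*(\cdot)A_i$ agrees with $\tau$ only on the subalgebra $\mathcal{A}$ and is \emph{not} in general a Choi-Kraus decomposition of $\widetilde{\tau}$ on $M_m$, so Proposition \ref{outer:irreducible} cannot be invoked for $\tau$ directly. My plan to get around this is to introduce the auxiliary CP map $\sigma(X):=\sum_i A_i^*XA_i$ defined on all of $M_m$. Its Choi-Kraus coefficients are exactly the $A_i$, which generate $M_m$ by hypothesis, so Proposition \ref{outer:irreducible} forces $\sigma$ to be irreducible on $M_m$; by the Evans--H\o egh-Krohn criterion (\cite[Lemma 2.1]{EvaDE:HoeR78}) this means $(1+\sigma)^{m-1}$ is strictly positive.

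The bridge between $\sigma$ and $\tau$ is the observation that they have identical powers on $\mathcal{A}$. For $X\in\mathcal{A}$ we have $\sigma(X)=\tau(X)\in\mathcal{A}$, the membership holding because $\tau$ maps $\mathcal{A}$ into $\mathcal{A}$; feeding this back in and inducting on $k$ yields $\sigma^k(X)=\tau^k(X)$ for all $k\ge0$ and all $X\in\mathcal{A}$. Expanding the binomial, it follows that $(1+\tau)^{m-1}(X)=(1+\sigma)^{m-1}(X)$ for every $X\in\mathcal{A}$. For nonzero positive $X\in\mathcal{A}\subseteq M_m$ the right-hand side is strictly positive in $M_m$; being an element of $\mathcal{A}$ that is invertible and positive in $M_m$, it is strictly positive within $\mathcal{A}$ as well, since a positive invertible block-diagonal matrix has a block-diagonal inverse. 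Hence $(1+\tau)^{m-1}$ is strictly positive, and the Evans--H\o egh-Krohn criterion now gives the irreducibility of $\tau$, closing the cycle and proving all four statements equivalent.
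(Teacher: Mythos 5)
Your proposal is correct, and the easy parts --- $(1)\Leftrightarrow(2)$ via Proposition \ref{outer:prop8}, $(2)\Rightarrow(3)$ via Proposition \ref{outer:irreducible} applied to $\widetilde{\tau}$ on the full algebra $M_m$, and the trivial $(3)\Rightarrow(4)$ --- coincide with the paper's proof. Where you genuinely diverge is $(4)\Rightarrow(1)$, which you rightly identify as the step with content, precisely because $\sum_i A_i^*(\cdot)A_i$ need not be a Choi--Kraus decomposition of $\widetilde{\tau}$ on all of $M_m$. The paper argues by contraposition: if $\tau$ is reducible, there is a non-trivial projection $P\in\mathcal{A}$ with $\tau(P)\le\lambda P$, and then $\|PA_ih\|^2=\langle A_i^*PA_ih,h\rangle\le\lambda\langle Ph,h\rangle=0$ for $h\in\mathrm{Ker}(P)$ shows $\mathrm{Ker}(P)$ is a common invariant subspace of the $A_i$, contradicting (via Proposition \ref{outer:irreducible}, i.e.\ Burnside) that they generate $M_m$. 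Your argument is direct: you introduce the auxiliary CP map $\sigma(X)=\sum_i A_i^*XA_i$ on $M_m$, get its irreducibility from Proposition \ref{outer:irreducible}, convert that to strict positivity of $(1+\sigma)^{m-1}$ via Evans--H\o egh-Krohn, and transport this back to $\tau$ using $\sigma^k|_{\mathcal{A}}=\tau^k|_{\mathcal{A}}$ (which holds by induction since $\tau$ preserves $\mathcal{A}$) together with the observation that a positive element of $\mathcal{A}$ invertible in $M_m$ is invertible in $\mathcal{A}$. Both are sound; the paper's contrapositive is shorter and avoids the $(1+\varphi)^{m-1}$ criterion in this step, while yours is constructive in spirit, runs parallel to the proof of Proposition \ref{outer:prop8}, and makes explicit the role of the auxiliary extension $\sigma$ (which may differ from $\widetilde{\tau}$) --- a point the paper leaves implicit.
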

\begin{proof}
	We have already established the equivalence between (1) and (2) in Proposition \ref{outer:prop8}, and the implication of (3) leading to (4) is self-evident. Now, we prove the remaining implications.
	
	\noindent (2) $\Rightarrow$ (3): Suppose $\widetilde{\tau}$ has
	Choi-Kraus expression $\widetilde{\tau}(X)=\sum_{i=1}^p
	A_i^*XA_i$ for all $X\in M_m$ where $\{A_1,\ldots,A_p\}\subseteq M_m$. Since $\tau$ is
	irreducible, by Proposition \ref{outer:irreducible}, the algebra generated by $A_i$'s is whole of $M_m$.  In particular,
	for $X=\oplus _{j=1}^dX_j\in\mathcal{A}$, we have
	\[\tau(X)=\iota\circ\tau\circ\phi(X)=\widetilde{\tau}(X)=\sum\limits_{i=1}^p A_i^*X A_i.\]
	
	\noindent {(4)} $\Rightarrow$ {(1)}: Let us assume on contrary that $\tau$ is reducible. Then, there is a non-trivial projection $P\in\mathcal{A}$ such that $\tau(P)\le\lambda\, P$ for some $\lambda>0$. We show that $\text{Ker}(P)$ is invariant under all $A_i$'s. For $h\in\text{Ker}(P)$, we have
	\[\|PA_ih\|^2=\langle A_i^*PA_ih,h\rangle\le\lambda\langle Ph,h\rangle=0, \text{for all }i.\]
	Therefore, $A_i(\text{Ker}(P))\subseteq \text{Ker}(P)$ for all
	$i$, hence, $A_i$'s possess a non-trivial common invariant subspace
	$\text{Ker}(P)$. This contradicts to the fact that
	$A_1,\ldots,A_p$ generate the algebra $M_m$.
\end{proof}

Let us now give two examples of CP maps on finite dimensional
$C^*$-algebras whose irreducibility can be verified using the
analysis done so far. For instance, one may use Corollary
\ref{outer:cor} for $\widetilde{\tau }.$

\begin{eg}\label{outer:eg2}
	Let $\mathcal{A}=M_2\oplus M_1$ and $\tau:\mathcal{A}\to\mathcal{A}$ be defined as
	\[\tau\left(\begin{bmatrix}
		a & b & 0\\
		c & d & 0\\
		0 & 0 & e
	\end{bmatrix}\right)=\begin{bmatrix}
		a+e & 0 & 0\\
		0 & a+e & 0\\
		0 & 0 & d
	\end{bmatrix}.\]
	Then $\tau $ is irreducible. \end{eg}

\begin{eg}\label{outer:eg4}
	Let $\mathcal{A}=M_1\oplus M_1\oplus M_1$ and
	$\tau:\mathcal{A}\to\mathcal{A}$ be defined as
	\[\tau\left(\begin{bmatrix}
		a & 0 & 0\\
		0 & b & 0\\
		0 & 0 & c
	\end{bmatrix}\right)=\begin{bmatrix}
		b & 0 & 0\\
		0 & a+c & 0\\
		0 & 0 & b
	\end{bmatrix}.\]
	Then $\tau $ is irreducible.
\end{eg}

In Section \ref{Outer:sec:positivemap}, we have defined and studied
the maximal part $\widehat{\varphi}$  for any positive map $\varphi$
with $r(\varphi)>0$, on finite dimensional $C^*$-algebras. For CP maps, there is more to explore.
We observe that one can obtain a Choi-Kraus decomposition for a CP
map $\tau$ in such a way that the Choi-Kraus coefficients of
$\widehat{\tau}$ span a non-zero ideal in the algebra generated by
the Choi-Kraus coefficients of $\tau$. To proceed, we require the following technical lemma.

\begin{lem}\label{outer:Sequence_of_CP_maps}
	Let $\{\tau_n\}$ be a sequence of CP maps on $M_m$ converging to a
	CP map $\tau$. Suppose that there is a subspace $\mathscr{V}$ of
	$M_m$ such that
	$\mathcal{M}_{\tau_n}\subseteq\mathcal{M}_{\tau_{n+1}}\subseteq\mathscr{V}$
	for all $n$. Then, $\mathcal{M}_{\tau}\subseteq\mathscr{V}$.
\end{lem}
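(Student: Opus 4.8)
The plan is to transfer the statement, through the Choi matrix, into a statement about ranges of positive semidefinite matrices, where the decisive point is that containment of the range in a fixed subspace is a \emph{closed} condition for positive matrices. First I would recall from Proposition \ref{outer:choi_matrix} that the vectorization map identifies the coefficient space $\mathcal{M}_\sigma$ of any CP map $\sigma$ on $M_m$ with the range of its conjugated Choi matrix $\overline{C_\sigma}$. Since $\sigma$ is CP we have $C_\sigma\ge 0$, and hence $\overline{C_\sigma}\ge 0$ as well, because entrywise conjugation preserves positivity (for Hermitian $H$ one has $\overline{H}=H^T$, which has the same eigenvalues as $H$). Thus the whole problem becomes one about the positive matrices $P_n:=\overline{C_{\tau_n}}$ and $P:=\overline{C_\tau}$.

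Setting $W:=\text{vec}(\mathscr{V})\subseteq\mathbb{C}^{m^2}$, which is a fixed subspace since $\text{vec}$ is a linear isomorphism, the hypothesis $\mathcal{M}_{\tau_n}\subseteq\mathscr{V}$ reads exactly as $\mathrm{Range}(P_n)\subseteq W$ for every $n$, and the desired conclusion $\mathcal{M}_\tau\subseteq\mathscr{V}$ reads as $\mathrm{Range}(P)\subseteq W$. Next I would invoke the continuity of the assignment $\sigma\mapsto C_\sigma$ recorded earlier in the paper: from $\tau_n\to\tau$ we obtain $C_{\tau_n}\to C_\tau$, and taking entrywise conjugates gives $P_n\to P$ in $M_{m^2}$.

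The main step is the elementary but crucial observation about positive matrices: for $P\ge 0$ one has $\mathrm{Range}(P)=(\ker P)^\perp$, so $\mathrm{Range}(P)\subseteq W$ is equivalent to $W^\perp\subseteq\ker P$, i.e.\ to $Pw=0$ for all $w\in W^\perp$. This reformulation is a closed condition on $P$. Applying it to each $P_n$ gives $P_n w=0$ for all $w\in W^\perp$; letting $n\to\infty$ yields $Pw=\lim_n P_n w=0$ for every $w\in W^\perp$, whence $W^\perp\subseteq\ker P$ and therefore $\mathrm{Range}(P)\subseteq W$. Translating back through vectorization gives $\mathcal{M}_\tau=\text{vec}^{-1}(\mathrm{Range}(P))\subseteq\mathscr{V}$, as required.

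The one point that requires care --- and the only place where positivity is genuinely used --- is that ranges of matrices are not continuous under limits in general (the rank can drop and the range can swing in the limit), so a naive argument attempting to pass the range containment directly to the limit would fail. It is precisely the identity $\mathrm{Range}(P)=(\ker P)^\perp$ for positive matrices that converts range-containment into the limit-stable condition $P|_{W^\perp}=0$, and this is the heart of the proof. I note that the hypothesis that the coefficient spaces form an increasing chain $\mathcal{M}_{\tau_n}\subseteq\mathcal{M}_{\tau_{n+1}}$ plays no role in this argument; only the uniform containment $\mathcal{M}_{\tau_n}\subseteq\mathscr{V}$ for each $n$ is needed.
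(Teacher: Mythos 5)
Your proof is correct, and it takes a genuinely different (and leaner) route than the paper's. The paper also passes to the conjugated Choi matrices $C_n\to C$, but it then works with the orthogonal projections $P_n$, $P$ onto $\mathrm{range}(C_n)$, $\mathrm{range}(C)$: it uses the nested hypothesis to get an increasing chain $P_n\le P_{n+1}\le Q$ (with $Q$ the projection onto $\mathrm{vec}(\mathscr{V})$), extracts the limit projection $R=\lim_n P_n\le Q$, and shows $RP=P$ via the estimate $\|P_nPh-Ph\|\le 2\|C_nh'-Ch'\|$, concluding $\mathrm{range}(P)\subseteq\mathrm{range}(Q)$. You instead exploit the single observation that for a positive matrix $P$ one has $\mathrm{range}(P)=(\ker P)^{\perp}$, so the containment $\mathrm{range}(P_n)\subseteq W$ is equivalent to $P_n|_{W^{\perp}}=0$, a condition that is manifestly closed under norm limits; this dispenses with projections, with the monotonicity hypothesis $\mathcal{M}_{\tau_n}\subseteq\mathcal{M}_{\tau_{n+1}}$ (which, as you correctly note, is never needed), and with the approximation estimate. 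The small facts you rely on --- continuity of $\sigma\mapsto C_\sigma$, positivity of the entrywise conjugate of a positive matrix, and Proposition \ref{outer:choi_matrix} identifying $\mathrm{vec}(\mathcal{M}_\sigma)$ with $\mathrm{range}(\overline{C_\sigma})$ --- are all available in the paper, so the argument is complete. What your version buys is a strictly weaker hypothesis and a one-line core step; what the paper's version buys is the explicit limit projection $R$, which is not needed for the statement as given.
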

\begin{proof}
	Let $C_{n}$ and $C$ be the conjugated Choi matrices of the CP maps $\tau_n$ and
	$\tau$ respectively. Since $\lim\tau_n=\tau$, we have $\lim
	C_{n}=C$. Now, let $P_n$ and $P$ be the $m^2\times m^2$ projection
	matrices with $\text{range}(P_n)=\text{range}(C_{n})$ and
	$\text{range}(P)=\text{range}(C)$. As
	$\mathcal{M}_{\tau_n}\subseteq\mathcal{M}_{\tau_{n+1}}\subseteq\mathscr{V}$,
	by Proposition \ref{outer:choi_matrix}, we have
	$\text{range}(P_n)\subseteq\text{range}(P_{n+1})\subseteq\text{vec}(\mathscr{V})$.
	Let $Q$ be the projection onto the subspace
	$\text{vec}(\mathscr{V})\subseteq\mathbb{C}^{m^2}$. Then, $P_n\le
	P_{n+1}\le Q$ for all $n$. For any $h\in\mathbb{C}^m$, there is
	$h'\in\mathbb{C}^m$ such that $Ph=C h'$. We infer that
	\[\|P_nPh-Ph\|=\|P_nC h'-Ch'\|\le\|P_n(C h'-C_{n}h')\|+\|C_{n}h'-C h'\|\le 2\|C_{n}h'-C h'\|.\]
	Taking limit as $n$ tends to infinity, this implies that $\lim _n
	P_nP=P$. Also, since $\{P_n\}$ is an increasing sequence of
	projections with $P_n\le Q$, there exists a projection $R\in
	M_{m^2}$ such that $P_n$ converges to $R$ and $R\le Q$. Then,
	$RP=\lim P_nP=P$. Consequently,
	$\text{range}(P)\subseteq\text{range}(R)$, and thus,
	$\text{range}(P)\subseteq \text{range}(R)\subseteq
	\text{range}(Q)=\text{vec}(\mathscr{V})$. Therefore, we have
	$\mathcal{M}_\tau=\text{vec}^{-1}(\text{range}(P))\subseteq
	\mathscr{V}$.
\end{proof}
\begin{rem}
	In the above lemma, the family of coefficient spaces $\mathcal{M}_{\tau_n}$ may not increase to $\mathcal{M}_\tau$, as demonstrated
	by the following simple example. Let $\tau_n$ be the CP map on $M_2$ defined as
	\[\tau_n(X)=\text{trace}(X)
	\begin{bmatrix}
		1 & 1-\frac{1}{n}\\
		1-\frac{1}{n} & 1
	\end{bmatrix}, ~~ \forall X\in M_2.\]
	It converges to the CP map $\tau(X)=\text{trace}(X)\left[\begin{matrix}
		1 & 1\\
		1 & 1
	\end{matrix}\right]$, $X\in M_2$.
	Observing that the rank of each Choi matrix $C_{\tau_n}$ is $4$, we have $\mathcal{M}_{\tau_n}=M_2$ for all $n$. However, the
	rank of the Choi matrix $C_{\tau}$ is $2$, indicating that
	$\mathcal{M}_\tau\subsetneqq M_2$.
\end{rem}

\begin{thm}\label{outer:thm8}
	Let $\tau$ be a CP map on a finite dimensional $C^*$-algebra
	$\mathcal{A}=M_{n_1}\oplus\cdots\oplus M_{n_d}$ with $r(\tau)>0$.
	Consider canonical extensions of $\tau , \widehat{\tau}$. Then
	\begin{equation}
		\widetilde{\widehat{\tau}}=\widehat{\widetilde{\tau}}.
	\end{equation}
	For any two  Choi-Kraus decompositions:
	\[\widetilde{{\tau}}(X)=\sum\limits_{i=1}^p A_i^*X A_i, ~~~~\widetilde{\widehat{\tau}}(X)=\sum\limits_{j=1}^{q}B_j^*XB_j, ~~
	X\in M_m\] of $\widetilde{\tau }, \widetilde{\widehat{\tau}}$
	respectively, the vector space $\text{span}\{B_1,\ldots, B_{q}\}$ is
	a subalgebra of $M_m$ and it is a non-zero ideal in the algebra
	generated by $A_1,\ldots,A_p$.
\end{thm}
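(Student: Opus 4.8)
The plan is to treat the two assertions in turn, the second resting on how Choi--Kraus coefficient spaces multiply under composition. Throughout I set $r=r(\tau)=r(\widetilde{\tau})$ (equal by Proposition \ref{outer:prop8}), write $\iota,\phi$ for the maps of Definition \ref{outer:def1}, and use the two facts $\phi\circ\iota=\mathrm{id}_{\mathcal{A}}$ and, consequently, $\widetilde{\tau}^{\,n}=\iota\circ\tau^n\circ\phi$ for all $n\ge1$ (already exploited in Proposition \ref{outer:prop8}).

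The first identity asserts that canonical extension and passage to the maximal part commute, namely $\widetilde{\widehat{\tau}}=\widehat{\widetilde{\tau}}$. To prove it I would first determine the Jordan structure of $\widetilde{\tau}$. Since $\mathrm{range}\,\widetilde{\tau}\subseteq\iota(\mathcal{A})$ while $\widetilde{\tau}$ annihilates $\ker\phi$, the splitting $M_m=\iota(\mathcal{A})\oplus\ker\phi$ is $\widetilde{\tau}$-invariant, and in a Jordan basis $\widetilde{\beta}$ adapted to it the matrix of $\widetilde{\tau}$ is $[\tau]_{\beta}\oplus 0$. Because $r>0$, the eigenvalue $0$ contributed by the $\ker\phi$-block is irrelevant to the maximal spectrum, so $\tau$ and $\widetilde{\tau}$ have the same maximal degeneracy index and, for every $n\ge1$, $\|\widetilde{\tau}^{\,n}\|_{\widetilde{\beta}}=\|\tau^n\|_{\beta}$. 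Substituting $\widetilde{\tau}^{\,n}=\iota\tau^n\phi$ and this norm equality into the definition \eqref{outer:eq4} of the maximal part and pulling the continuous operation $\iota(\cdot)\phi$ through the Ces\`aro limit yields $\widehat{\widetilde{\tau}}=\iota\,\widehat{\tau}\,\phi=\widetilde{\widehat{\tau}}$.

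For the second assertion the algebraic engine is the composition rule $\mathcal{M}_{\sigma\circ\rho}=\mathcal{M}_{\rho}\,\mathcal{M}_{\sigma}$, which follows from $(\sigma\circ\rho)(X)=\sum_{i,j}(C_jA_i)^*X(C_jA_i)$ when $\sigma(X)=\sum_iA_i^*XA_i$ and $\rho(X)=\sum_jC_j^*XC_j$, together with the fact that scaling a CP map by a positive scalar leaves its coefficient space unchanged. Writing $\mathcal{M}=\mathcal{M}_{\widetilde{\tau}}=\mathrm{span}\{A_i\}$ and $\widehat{\mathcal{M}}=\mathcal{M}_{\widetilde{\widehat{\tau}}}=\mathrm{span}\{B_j\}$, I would get the commutation relations directly from $\phi\iota=\mathrm{id}$ and $\tau\widehat{\tau}=\widehat{\tau}\tau=r\widehat{\tau}$ (Theorem \ref{outer:thm6}):
\[\widetilde{\tau}\circ\widetilde{\widehat{\tau}}=\iota\,(\tau\widehat{\tau})\,\phi=r\,\widetilde{\widehat{\tau}}=\iota\,(\widehat{\tau}\tau)\,\phi=\widetilde{\widehat{\tau}}\circ\widetilde{\tau}.\]
Reading these through the composition rule gives $\mathcal{M}\widehat{\mathcal{M}}=\widehat{\mathcal{M}}=\widehat{\mathcal{M}}\mathcal{M}$; iterating gives $\mathcal{M}^n\widehat{\mathcal{M}}=\widehat{\mathcal{M}}=\widehat{\mathcal{M}}\mathcal{M}^n$ for all $n\ge1$, and summing over $n$ gives $\mathcal{B}\widehat{\mathcal{M}}=\widehat{\mathcal{M}}=\widehat{\mathcal{M}}\mathcal{B}$, where $\mathcal{B}=\sum_{n\ge1}\mathcal{M}^n$ is the algebra generated by the $A_i$'s.

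The remaining and, I expect, most delicate point is the containment $\widehat{\mathcal{M}}\subseteq\mathcal{B}$, which is exactly what Lemma \ref{outer:Sequence_of_CP_maps} is built for. Here I would write $\widetilde{\widehat{\tau}}$ as the limit of the CP maps $\sigma_N=\frac1N\sum_{n=1}^N\|\tau^n\|_{\beta}^{-1}\,\widetilde{\tau}^{\,n}$, whose coefficient spaces $\mathcal{M}_{\sigma_N}=\sum_{n=1}^N\mathcal{M}^n$ (positive coefficients, as in the proof of Theorem \ref{outer:thm12}) form an increasing chain inside $\mathcal{B}$; Lemma \ref{outer:Sequence_of_CP_maps} with $\mathscr{V}=\mathcal{B}$ then forces $\widehat{\mathcal{M}}\subseteq\mathcal{B}$. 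Once this is available, $\widehat{\mathcal{M}}\widehat{\mathcal{M}}\subseteq\widehat{\mathcal{M}}\mathcal{B}=\widehat{\mathcal{M}}$ shows that $\widehat{\mathcal{M}}$ is a subalgebra, while $\mathcal{B}\widehat{\mathcal{M}}=\widehat{\mathcal{M}}=\widehat{\mathcal{M}}\mathcal{B}$ together with $\widehat{\mathcal{M}}\subseteq\mathcal{B}$ shows it is a two-sided ideal of $\mathcal{B}$; it is non-zero because $\widehat{\tau}\neq0$ (Proposition \ref{outer:prop3}, as $r>0$) forces $\widetilde{\widehat{\tau}}\neq0$, hence $\widehat{\mathcal{M}}\neq\{0\}$.
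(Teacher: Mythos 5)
Your proposal is correct and follows the paper's proof in all essentials: the same splitting $M_m=\iota(\mathcal{A})\oplus\ker\phi$ with a matched Jordan basis to get $\|\widetilde{\tau}^{\,n}\|_{\beta'}=\|\tau^n\|_{\beta}$ and hence $\widehat{\widetilde{\tau}}=\iota\,\widehat{\tau}\,\phi=\widetilde{\widehat{\tau}}$, the same application of Lemma \ref{outer:Sequence_of_CP_maps} to the Ces\`aro averages to obtain $\mathrm{span}\{B_j\}\subseteq\mathcal{B}$, and the same commutation relations $\widetilde{\tau}\,\widetilde{\widehat{\tau}}=\widetilde{\widehat{\tau}}\,\widetilde{\tau}=r\,\widetilde{\widehat{\tau}}$ from Theorem \ref{outer:thm6} for the ideal property. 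The one point of divergence is the subalgebra property: the paper deduces $B_jB_k\in\mathrm{span}\{B_j\}$ from the dichotomy $\widehat{\widetilde{\tau}}^{\,2}\in\{0,\widehat{\widetilde{\tau}}\}$ of Theorem \ref{outer:thm6}(3), whereas you get it for free from the containment $\widehat{\mathcal{M}}\subseteq\mathcal{B}$ combined with $\widehat{\mathcal{M}}\mathcal{B}=\widehat{\mathcal{M}}$; your single composition rule $\mathcal{M}_{\sigma\circ\rho}=\mathcal{M}_{\rho}\mathcal{M}_{\sigma}$ also subsumes the paper's separate appeals to Proposition \ref{outer:prop2}. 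This is a modest but genuine streamlining; both arguments ultimately rest on the decomposition-independence of the Choi--Kraus coefficient space.
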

\begin{proof}
	Observe that $\tau :\mathcal{A}\to \mathcal{A}$,
	$\widetilde{\tau}:M_m\to M_m$ and $\mathcal{A}$ is a subspace of
	$M_m$. With respect to the Hilbert-Schmidt inner product, we
	decompose $M_m$ as $M_m= \mathcal{A}\oplus \mathcal{A}^\perp $ and
	with respect to this decomposition, $\widetilde{\tau }= \tau \oplus
	0$.
	Now, it is clear that given a Jordan basis $\beta $ for $\tau $ (basis
	elements span $\mathcal{A}$), we can extend it to a Jordan basis
	$\beta '$ for $\widetilde{\tau }$ (basis elements span $M_m$) such
	that elements of $\beta '\setminus \beta $ form a basis of
	$\mathcal{A}^\perp.$ In particular, $\widehat{\tau }(X)=0$ for all
	$X\in \beta '\setminus \beta .$ Then it is easy to see that
	$\lVert\widetilde{\tau}^n\rVert_{\beta'}=\lVert\tau^n\rVert_{\beta}$
	for all $n\in\mathbb{N}$. From Proposition \ref{outer:prop8},
	$r(\widetilde{\tau })=r(\tau )>0$ and hence
	\[\widehat{\widetilde{\tau}}=\lim\limits_{N\to\infty}\frac{1}{N}\sum\limits_{n=1}^N\frac{\widetilde{\tau}^n}{\lVert\widetilde{\tau}^n\rVert_{\beta'}}=\lim\limits_{N\to\infty}\frac{1}{N}\sum\limits_{n=1}^N\frac{\iota\circ\tau^n\circ\phi}{\lVert\tau^n\rVert_{\beta}}=\iota\circ\widehat{\tau}\circ\phi = \widetilde{\widehat{\tau}}.\]
	This proves the first part.
	
	For the second part, let $\tau_N=\frac{1}{N}\sum\limits_{k=1}^N\frac{\widetilde{\tau}^k}{\lVert\widetilde{\tau}^k\rVert_{\beta'}}$ and $\mathscr{V}$ be the algebra generated by $A_i$'s. Then, $\lim\tau_N=\widehat{\widetilde{\tau}}$. Observe that $\mathcal{M}_{\widehat{\widetilde{\tau}}}=\text{span}\{B_j:1\le j\le q\}$ and $\mathcal{M}_{\tau_N}\subseteq\mathcal{M}_{\tau_{N+1}}\subseteq\mathscr{V}$ for all $N$. Therefore, by Lemma \ref{outer:Sequence_of_CP_maps}, it is evident that $\text{span}\{B_j:1\le j\le q\}$ is a subspace of the algebra generated by $A_i$'s. From  Theorem \ref{outer:thm6}, we know that either
	$\widehat{\widetilde{\tau}} ^2= \widehat{\widetilde{\tau}}$ or
	$\widehat{\widetilde{\tau }}^2= 0$. In the first case, the expression
	$$\sum _{j,k}B_k^*B_j^*XB_jB_k= \sum _jB_j^*XB_j, ~~X\in M_m$$
	represents two different Choi-Kraus decompositions of
	$\widehat{\widetilde{\tau }}$, implying $B_jB_k\in
	\text{span}\{B_j:1\leq j\leq q\}$. In the second case, $$\sum _{j,k}B_k^*B_j^*XB_jB_k=0, ~~X\in M_m$$ and hence $B_jB_k=0$ for all
	$j,k.$ Therefore, in either case $\text{span}\{B_j:1\leq j\leq q\}$
	is an algebra. Similarly, from Theorem \ref{outer:thm6}, we have
	$\widetilde{\tau}\,\widehat{\widetilde{\tau}}=\widehat{\widetilde{\tau}}\,\widetilde{\tau}=r(\tau)\,\widehat{\widetilde{\tau}}$.
	Thus,
	\[\sum\limits_{i,j}(A_i B_j)^*X(A_i B_j)=\sum\limits_{j,i}(B_jA_i)^*X(B_jA_i)=r(\tau)\,\sum\limits_{j=1}^{q}B_j^*X B_j\ \text{ for all } X\in M_m.\]
	Hence, by Proposition \ref{outer:prop2} each of $A_iB_j$ and
	$B_jA_i$ belong to the span of $B_j$'s, which implies
	$\text{span}\{B_j:1\le j\le q\}$ forms an ideal of the
	algebra generated by $A_i$'s. It is non-zero because
	$\widehat{\widetilde{\tau}}$ is a non-zero map.
\end{proof}

Now, we look at irreducible CP maps on finite dimensional
$C^*$-algebras. We need a technical lemma.

\begin{lem}\label{outer:lem6}
	Let $W_1, W_2$ be two linearly independent, positive operators on a
	Hilbert
	space $\mathcal{H}$. Then there exists a non-zero $W_3\in\text{span}\{W_1,W_2\}$ which is positive and singular.
\end{lem}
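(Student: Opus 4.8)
The plan is to produce $W_3$ on the boundary of the positivity cone inside $\mathrm{span}\{W_1,W_2\}$, exploiting that an invertible positive operator sits in the interior of that cone. Since any positive element of the span is in particular self-adjoint, and $W_1,W_2$ are self-adjoint and linearly independent, such an element must have real coefficients; hence it suffices to search within the real one-parameter family $W_1-sW_2$, $s\ge 0$, and to exhibit a single singular member. First I would set
\[ S=\{s\ge 0:\ W_1-sW_2\ge 0\}. \]
Note $0\in S$ because $W_1\ge 0$, so $S\ne\emptyset$. Moreover $S$ is bounded: as $W_2\ge 0$ and $W_2\neq 0$ (linear independence forces both operators to be nonzero), there is a unit vector $v$ with $c:=\langle W_2 v,v\rangle>0$, and every $s\in S$ satisfies $sc\le \langle W_1 v,v\rangle\le\|W_1\|$. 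Since $s\mapsto W_1-sW_2$ is norm-continuous and the cone of positive operators is norm-closed, $S$ is closed; thus $s^*:=\sup S$ is finite and attained. Set $W_3:=W_1-s^*W_2$.

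Next I would record that $W_3$ is an admissible candidate: it is positive because $s^*\in S$, and it is nonzero because $W_1-s^*W_2=0$ would give $W_1=s^*W_2$, contradicting the linear independence of $W_1,W_2$ (the case $s^*=0$ being excluded since $W_1\neq 0$). It then remains only to see that $W_3$ is singular.

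The heart of the argument, and the step requiring care, is showing that maximality of $s^*$ forces $W_3$ to be non-invertible. Suppose, for contradiction, that $W_3$ is invertible; being positive it is then bounded below, say $W_3\ge\varepsilon I$ for some $\varepsilon>0$. For $\delta>0$ one has $W_1-(s^*+\delta)W_2=W_3-\delta W_2\ge \varepsilon I-\delta\|W_2\|\,I$, which is positive as soon as $\delta\le \varepsilon/\|W_2\|$. Then $s^*+\delta\in S$, contradicting $s^*=\sup S$. Hence $W_3$ is not invertible; since $W_3\ge 0$, this means $0\in\sigma(W_3)$, that is, $W_3$ is positive and singular (in the finite-dimensional setting of interest it has nontrivial kernel). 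This also transparently covers the edge case $s^*=0$, where $W_3=W_1$ and the conclusion is that $W_1$ itself is singular. The only genuine subtlety is the perturbation estimate closing the boundary argument; the finiteness of $s^*$, hence the existence of a boundary point at all, rests precisely on $W_2\neq 0$, which is guaranteed by linear independence.
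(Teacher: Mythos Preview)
Your proof is correct and follows essentially the same idea as the paper: both locate $W_3$ on the ray $\{W_1 - sW_2 : s\ge 0\}$ at the critical value of $s$ where positivity is about to fail. The paper first disposes of the case where one of $W_1,W_2$ is already singular, then in the invertible case \emph{computes} the critical parameter explicitly as $s^*=1/d$ with $d=r(W_1^{-1/2}W_2W_1^{-1/2})$, verifying positivity and singularity of $W_1-d^{-1}W_2$ by hand. Your variational argument via $s^*=\sup\{s\ge 0: W_1-sW_2\ge 0\}$ reaches the same $W_3$ without the case split and without invoking the inverse square root; the trade-off is that the paper's version gives a closed formula for the critical $s$, while yours is a clean existence argument. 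Both are valid and yield the identical operator when $W_1,W_2$ are invertible.
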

\begin{proof}
	If $W_1$ (or $W_2$) is singular, we may simply set $W_3 = W_1$ (respectively, $W_3 = W_2$), as it immediately satisfies the requirement. So, we may assume both $W_1$ and $W_2$ are non-singular. Let $d$ be the spectral radius of the positive operator $V={W_1}^{-1/2}W_2 {W_1}^{-1/2}$ and $W_3=W_1-d^{-1}W_2$ which is non-zero due to the linear independence of $W_1,W_2$. Since $\sigma(V)=\sigma({W_1}^{-1/2}W_2 {W_1}^{-1/2})=\sigma(W_1^{-1}W_2)$, we have $d\in\sigma(W_1^{-1}W_2)$. This implies $W_1^{-1}W_2-d\,I$ is singular which is equivalent to the singularity of $W_3=d^{-1}W_1(d\,I-W_1^{-1}W_2)$. We need to verify the positivity of $W_3$. For $x\in \mathcal{H}$, take $y=W_1^{1/2}x$. Then
	\[\langle x,(W_1-d^{-1}W_2) x\rangle=\langle y,y\rangle-d^{-1}\langle y,{W_1}^{-1/2}W_2 {W_1}^{-1/2}y\rangle=d^{-1}(d\langle y,y\rangle-\langle y,Vy\rangle)\]
	Since $V\le d\,I$, we infer that $\langle x,W_3 x\rangle\ge0$ for all $x\in\mathcal{H}$, proving positivity of $W_3$.
\end{proof}

We have observed in Remark \ref{outer:rem1} that if a positive map on a finite dimensional
$C^*$-algebra  is irreducible, then its spectral radius is strictly
positive. So, we can talk about its maximal part. Here, we connect the irreducibility of a CP map with that of its maximal part.

\begin{thm}\label{outer:thm9}
	Let $\tau$ be a CP map on a finite dimensional $C^*$-algebra
	$\mathcal{A}=M_{n_1}\oplus\cdots\oplus M_{n_d}$  and
	$m=n_1+\cdots+n_d$.  Then,  $\tau$ is irreducible if and only if
	$\widehat{\tau }$ is irreducible. In such a case, the following
	properties hold: \begin{enumerate}
		\item The maximal degeneracy index of $\tau $ is $1.$
		
		\item The map $\widehat{\tau}$ is
		a strictly positive map of rank $1$.
		
		\item  The CP map $\widetilde{\widehat{\tau}}$ has Choi rank $m^2$. In particular, there exists $m\times m$ matrices $\{B_1,\ldots,B_q\}$ which span $M_m$ and
		\[\widehat{\tau}(X)=\sum\limits_{j=1}^{q}B_j^*X B_j,~~ \forall X\in \mathcal{A}.\]
		
		\item  There is a faithful state $\psi$ on $\mathcal{A}$ and a strictly positive $L\in\mathcal{A}$ such that
		\[\widehat{\tau}(X)=\psi(X)\,L\ \text{ for all }X\in\mathcal{A}\] and
		(a)  $\tau(L)=r(\tau)\,L$, (b)  $\psi(L)=1$, and (c) $\psi\circ\tau=r(\tau)\,\psi$.
		
		\item There is no positive generalized eigenvector of $\tau$ corresponding to eigenvalue of modulus less than $r(\tau)$. In other words, the set $\left\{X\ge 0:\,\lim\frac{\tau^n(X)}{r(\tau)^n}=0\right\}$ only contains $0$.
	\end{enumerate}
\end{thm}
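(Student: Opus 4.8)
The plan is to extract the rank-one normal form of $\widehat{\tau}$ first and then bootstrap every assertion from the faithfulness of the associated state. Since $\tau$ is irreducible we have $r=r(\tau)>0$ (Remark \ref{outer:rem1}), so $\widehat{\tau}$ is defined and, by Proposition \ref{outer:prop3}, it is CP. Remark \ref{outer:rem1} (resting on Theorems \ref{outer:thm10} and \ref{outer:thm6}) already supplies the shape $\widehat{\tau}(X)=\psi(X)\,L$ for all $X\in\mathcal{A}$, where $\psi$ is a state, $L=\widehat{\tau}(1)$ is strictly positive and $\tau(L)=r\,L$; this is precisely the normal form demanded in (4), and I would record (4a) at once. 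Composing, $\widehat{\tau}^2(X)=\psi(L)\,\widehat{\tau}(X)$, and since $L\ge\varepsilon\,1$ for some $\varepsilon>0$ and $\psi$ is a state we get $\psi(L)\ge\varepsilon\,\psi(1)=\varepsilon>0$, so $\widehat{\tau}^2\neq 0$. By Theorem \ref{outer:thm6}(3) this forces $d_\tau=1$, which is (1), and then $\widehat{\tau}^2=\widehat{\tau}$ gives $\psi(L)=1$, which is (4b). Reading the identity $\widehat{\tau}\tau=r\widehat{\tau}$ of Theorem \ref{outer:thm6}(1) as $\psi(\tau(X))\,L=r\,\psi(X)\,L$ and cancelling $L\neq 0$ yields $\psi\circ\tau=r\psi$, i.e. (4c).

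The heart of the matter is the faithfulness of $\psi$. Iterating (4c) gives $\psi\circ\tau^n=r^n\psi$, hence $\psi\big((1+\tau)^{m-1}(X)\big)=(1+r)^{m-1}\psi(X)$ for every $X$. If $X\ge 0$ with $X\neq 0$, then irreducibility of $\tau$ makes $(1+\tau)^{m-1}(X)$ strictly positive (Evans--H\o egh-Krohn, cited in Section \ref{Outer:sec:prelim}), so $(1+\tau)^{m-1}(X)\ge\delta\,1$ for some $\delta>0$ and therefore $\psi\big((1+\tau)^{m-1}(X)\big)\ge\delta>0$; as $(1+r)^{m-1}>0$ this forces $\psi(X)>0$, proving $\psi$ faithful and completing (4). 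Faithfulness then gives (2) immediately: the range of $\widehat{\tau}$ is the line $\mathbb{C}L$, so $\widehat{\tau}$ has rank one, and for $X\ge 0$, $X\neq 0$ we have $\widehat{\tau}(X)=\psi(X)\,L>0$ because $\psi(X)>0$ and $L$ is strictly positive.

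For the equivalence of irreducibility I would argue both directions directly. If $\tau$ is reducible, say $\tau(p\mathcal{A}p)\subseteq p\mathcal{A}p$ for a non-trivial projection $p$, then $\tau^n(p\mathcal{A}p)\subseteq p\mathcal{A}p$ for all $n$ and, $p\mathcal{A}p$ being a closed (finite-dimensional) subspace, the Ces\`aro averages defining $\widehat{\tau}$ also leave $p\mathcal{A}p$ invariant, so $\widehat{\tau}$ is reducible. Conversely, assuming $\tau$ irreducible, suppose $\widehat{\tau}(p\mathcal{A}p)\subseteq p\mathcal{A}p$ for a non-trivial projection $p$; testing on $p\in p\mathcal{A}p$ gives $\psi(p)\,L\in p\mathcal{A}p$, and $\psi(p)>0$ by faithfulness forces $L=pLp$, i.e. $(1-p)L(1-p)=0$, contradicting $L\ge\varepsilon\,1$ unless $p=1$. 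Hence $\tau$ is irreducible if and only if $\widehat{\tau}$ is.

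For (3) I would transfer the problem to $M_m$. By Proposition \ref{outer:prop8} the canonical extension $\widetilde{\tau}$ is an irreducible CP map on $M_m$, and by Theorem \ref{outer:thm8} we have $\widetilde{\widehat{\tau}}=\widehat{\widetilde{\tau}}$. Applying the normal form and faithfulness already obtained (now to $\widetilde{\tau}$ on $M_m$) yields $\widetilde{\widehat{\tau}}(X)=\mathrm{tr}(RX)\,L'$ with $R,L'$ strictly positive in $M_m$. Diagonalizing $R=\sum_a\mu_a\lvert u_a\rangle\langle u_a\rvert$ and $L'=\sum_b\nu_b\lvert v_b\rangle\langle v_b\rvert$ with $\mu_a,\nu_b>0$ and orthonormal bases $\{u_a\},\{v_b\}$, one reads off the Choi--Kraus coefficients $B_{ab}=\sqrt{\mu_a\nu_b}\,\lvert u_a\rangle\langle v_b\rvert$, whose span is all of $M_m$; hence the Choi rank is $m^2$, and restricting to $\mathcal{A}$ (on which $\phi$ acts as the identity) gives $\widehat{\tau}(X)=\sum_j B_j^*XB_j$ for $X\in\mathcal{A}$ with $\{B_j\}$ spanning $M_m$. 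Finally, (5) is one more faithfulness consequence: if $X\ge 0$ lies in the span of generalized eigenspaces for eigenvalues of modulus $<r$, then $\tau^n(X)/r^n\to 0$ by \eqref{powers}, while $\psi(\tau^n(X)/r^n)=\psi(X)$ for every $n$ by (4c), so letting $n\to\infty$ gives $\psi(X)=0$ and hence $X=0$. I expect the genuine obstacles to be the faithfulness step and the identification of the Choi rank of the rank-one map $X\mapsto\mathrm{tr}(RX)\,L'$ as exactly $m^2$; once faithfulness is secured, the remaining parts and the equivalence fall out quickly.
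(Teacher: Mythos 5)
Your proposal is correct, but it reaches the conclusions by a genuinely different route from the paper. You extract the rank-one normal form $\widehat{\tau}(X)=\psi(X)L$ at the outset by invoking Remark \ref{outer:rem1}, i.e.\ by importing the Evans--H\o egh-Krohn simplicity of the Perron--Frobenius eigenvalue (Theorem \ref{outer:thm10}); the paper deliberately avoids this and instead re-derives the rank-one property from Lemma \ref{outer:lem6} combined with strict positivity of $\widehat{\tau}$, which keeps Theorem \ref{outer:thm9} closer to the paper's stated aim of a self-contained, constructive treatment. Your faithfulness argument is also different and arguably more elementary: from $\psi\circ\tau=r\psi$ you get $\psi\circ(1+\tau)^{m-1}=(1+r)^{m-1}\psi$ and conclude via strict positivity of $(1+\tau)^{m-1}$ on non-zero positives, whereas the paper first proves the Choi rank of $\widetilde{\widehat{\tau}}$ is $m^2$ (via the ideal structure of Theorem \ref{outer:thm8} and simplicity of $M_m$) and then deduces $\widehat{\tau}(X)\ge s^{-1}X$ from Proposition \ref{outer:prop2}. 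For part (3) you diagonalize $R$ and $L'$ and exhibit explicit Kraus operators $\sqrt{\mu_a\nu_b}\,u_av_b^{\,*}$ spanning $M_m$, which is concrete and verifiable, while the paper's argument explains \emph{why} the coefficient space must be all of $M_m$ (it is a non-zero ideal in the algebra generated by the $A_i$'s, which is $M_m$ by Theorem \ref{outer:thm20}). The net effect is that the order of deduction is reversed (you go normal form $\to$ faithfulness $\to$ Choi rank; the paper goes Choi rank $\to$ faithfulness $\to$ strict positivity $\to$ rank one $\to$ normal form), and your version trades the paper's independence from Theorem \ref{outer:thm10} for brevity and explicitness. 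All individual steps check out, including the projection test for irreducibility of $\widehat{\tau}$ and the limit argument for part (5), which parallels the paper's use of Remark \ref{outer:note1}.
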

\begin{proof}
	The irreducibility of $\widehat{\tau}$ ensures the irreducibility of $\tau$ since any hereditary subspace invariant under $\tau$ remains invariant under $\tau^n$, and therefore under $\widehat{\tau}$ as well. Assume that $\tau$ is irreducible. Suppose that the CP maps $\widetilde{{\tau}}$ and $\widetilde{\widehat{\tau}}$ have the Choi-Kraus decompositions
	\[\widetilde{\tau}(X)=\sum\limits_{i=1}^p A_i^*X A_i,\ \widetilde{\widehat{\tau}}(X)=\sum\limits_{j=1}^{q}B_j^*X B_j\ \text{ for all } X\in M_m .\]
	According to Theorem \ref{outer:thm8}, $B_j$'s span a non-zero ideal in the algebra generated by the $A_i$'s. Since $\tau$ is irreducible, it follows from  Theorem \ref{outer:thm20} that the Choi-Kraus coefficients $A_i$'s of the canonical extension $\widetilde{\tau}$ generate the full matrix algebra $M_m$. Since $M_m$ is simple, we have $\text{span}\{B_j:1\le j\le q\}=M_m$. Therefore, $\mathcal{M}_{\widetilde{\widehat{\tau}}}=M_m$, in other words, $\widetilde{\widehat{\tau}}$ has Choi rank $m^2$. This proves part (3). By Proposition \ref{outer:prop2},
	we infer that $\alpha_I$ is dominated by $s\,\widetilde{\widehat{\tau}}$ for some $s>0$ where $\alpha_I(X)=X$ for all $X\in M_m$. This implies in particular that for all positive $X\in\mathcal{A}$, $\widehat{\tau}(X)\ge s^{-1}X$, thus confirming the faithfulness of $\widehat{\tau}$.
	
	Let $X\in\mathcal{A}$ be non-zero positive. Then, by Theorem \ref{outer:thm6}, $\tau(\widehat{\tau}(X))=r(\tau)\widehat{\tau}(X)$. For any $h\in\text{Ker}(\widehat{\tau}(X))$, we infer that
	\[\|\widehat{\tau}(X)^{1/2}A_ih\|^2\le\sum\limits_k \langle \widehat{\tau}(X)A_kh,A_kh\rangle=\langle\tau(\widehat{\tau}(X))h,h\rangle=r(\tau)\langle \widehat{\tau}(X)h,h\rangle=0\text{ for all }i.\]
	Therefore, $\widehat{\tau}(X)A_ih=0$ for all $h\in\text{Ker}(\widehat{\tau}(X))$, consequently implying $A_i(\text{Ker}(\widehat{\tau}(X)))\subseteq\text{Ker}(\widehat{\tau}(X))$ for all $i$. Since $A_i$'s generate the full matrix algebra $M_m$ and $\widehat{\tau}(X)$ is non-zero, $\text{Ker}(\widehat{\tau}(X))=0$, proving strict positivity of $\widehat{\tau}(X)$. Hence, $\widehat{\tau}$ is strictly positive, in particular, irreducible.
	
	Now, as per Theorem \ref{outer:thm6}, either $\widehat{\tau}^2=0$ or $\widehat{\tau}^2=\widehat{\tau}$. Since $\widehat{\tau}$ is strictly positive, we must have $\widehat{\tau}^2=\widehat{\tau}$, hence from Theorem \ref{outer:thm6}, $d_\tau=1$. This proves part (1).  Let us assume on contrary that $\widehat{\tau}$ has rank greater than $1$. Then, since $\mathcal{A}$ is span of its positive elements, there are positive elements $V_1,V_2$ in $\mathcal{A}$ such that $\widehat{\tau}(V_1),\widehat{\tau}(V_2)$ are linearly independent. Let $W_i=\widehat{\tau}(V_i)$ for $i=1,2$. Then $W_1$ and $W_2$ are positive, linearly independent with $\widehat{\tau}(W_i)=W_i$ for $i=1,2$. By Lemma \ref{outer:lem6}, there exists a non-zero positive $W_3\in\text{span}\{W_1,W_2\}$ which is singular. But $W_3=\widehat{\tau}(W_3)$ has to be strictly positive, leading to a contradiction. Therefore, $\widehat{\tau}$ has rank $1$, validating part (2).
	
	Furthermore, the range of $\widehat{\tau}$ is spanned by the strictly positive element $L=\widehat{\tau}({1})=\sum B_j^* B_j$. Therefore,
	\begin{equation}\label{outer:eq13}
		\forall X\in\mathcal{A}, \exists\,\psi(X)\in\mathbb{C}\text{ such that }\widehat{\tau}(X)=\psi(X)\, L.
	\end{equation}
	Since $\widehat{\tau}$ is linear and positive, $\psi$ is a  positive linear functional. As $\widehat{\tau}$ is faithful, so is $\psi$. Furthermore, as $L=\widehat{\tau}({1})=\psi(1)L$, we have $\psi({1})=1$. Therefore, $\psi$ is unital and hence a state. Again, recall from Theorem \ref{outer:thm6} that $\widehat{\tau}\tau=r(\tau)\,\widehat{\tau}$. We infer that $\tau(L)=\tau\widehat{\tau}({1})=r(\tau)\,\widehat{\tau}({1})=r(\tau)\,L$. Since $\widehat{\tau}^2=\widehat{\tau}$, using \eqref{outer:eq13}, we have
	\[\widehat{\tau}^2(X) =\widehat{\tau}(\widehat{\tau}(X))=\widehat{\tau}(\psi(X)L)=\psi(X)\widehat{\tau}(L)=\psi(X)\psi(L)\,L=\widehat{\tau}(X)\psi(L).\]
	Thus, we must have $\psi(L)=1$. Finally, it is evident that for all $X\in\mathcal{A}$,
	$$\widehat{\tau}\tau(X)=r(\tau)\,\widehat{\tau}(X)\implies \psi(\tau(X))\,L=r(\tau)\,\psi(X)\,L\implies \psi(\tau(x))=r(\tau)\,\psi(X).$$ Altogether, it establishes part (4).
	
	Recall from remark \ref{outer:note1} that if $X\in\mathcal{A}$ is a generalized eigenvector of $\tau$ corresponding to an eigenvalue of modulus less $r(\tau)$, then $\widehat{\tau}(X)=0$. Since $\widehat{\tau}$ is faithful here, there is no such element which is positive. This proves part (5).
\end{proof}

We also note that a faithful state as in this theorem means that
there exists a strictly positive density matrix $R$ such that
$\psi(X)= \text{trace}(RX)$ for all $X\in \mathcal{A}$.

\begin{cor}\label{outer:cor5}
	Let $\tau$ be a CP map on a finite dimensional $C^*$-algebra
	$\mathcal{A}$ with $r(\tau)>0$. If $\tau$ is irreducible, then there
	is a CP map $\sigma$ on $\mathcal{A}$ which is elementarily similar
	to $\tau$ such that $\lVert\sigma\rVert=r(\tau)$.
\end{cor}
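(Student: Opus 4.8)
The plan is to combine the structural description of irreducible CP maps from Theorem \ref{outer:thm9} with the attainment criterion of Theorem \ref{outer:thm4}. The key observation is that irreducibility already hands us an \emph{exact} Perron--Frobenius eigenvector, which is more than enough to satisfy the hypothesis of Theorem \ref{outer:thm4}.

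First I would invoke Theorem \ref{outer:thm9}. Since $\tau$ is irreducible on the finite dimensional $C^*$-algebra $\mathcal{A}$ with $r(\tau)>0$, part (4) of that theorem produces a strictly positive element $L\in\mathcal{A}$ (namely $L=\widehat{\tau}(1)$) satisfying $\tau(L)=r(\tau)\,L$. In particular $\tau(L)\le r(\tau)\,L$, so taking $w=L$ shows that condition (2) of Theorem \ref{outer:thm4} holds for $\tau$ with $r=r(\tau)$.

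Next, Theorem \ref{outer:thm4} yields a positive map $\sigma$ elementarily similar to $\tau$ with $\lVert\sigma\rVert=r(\tau)$; tracing the implications $(2)\Rightarrow(3)\Rightarrow(1)$ in its proof, this map is explicitly $\sigma=\alpha_v^{-1}\circ\tau\circ\alpha_v$ with $v=L^{1/2}$. It remains only to check that $\sigma$ is completely positive. Since $v$ is positive and invertible, both $\alpha_v$ and $\alpha_v^{-1}=\alpha_{v^{-1}}$ are elementary CP maps, so $\sigma$ is a composition of three CP maps and is therefore CP. This completes the argument.

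The proof is essentially immediate once the two earlier results are in hand, so there is no serious obstacle; the only point requiring care is confirming that the elementarily similar map supplied by Theorem \ref{outer:thm4}, which a priori is asserted only to be positive, is in fact CP. This is automatic here because elementary similarity is implemented by conjugation with the CP maps $\alpha_v$ and $\alpha_{v^{-1}}$, which preserve complete positivity.
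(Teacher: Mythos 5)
Your proposal is correct and follows essentially the same route as the paper: obtain the strictly positive Perron--Frobenius eigenvector $L$ from Theorem \ref{outer:thm9} and then apply Theorem \ref{outer:thm4} (via condition (2) with $w=L$). Your extra remark that $\sigma=\alpha_{L^{1/2}}^{-1}\circ\tau\circ\alpha_{L^{1/2}}$ is genuinely CP, not merely positive, is a worthwhile clarification that the paper's one-line proof leaves implicit.
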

\begin{proof}
	From Theorem \ref{outer:thm9}, we have strictly positive $L\in\mathcal{A}$ such that $\tau(L)=r\,L$. Due to Theorem \ref{outer:thm4}, there exists a CP map $\sigma$ on $\mathcal{A}$ which is elementarily similar to $\tau$ such that $\lVert\sigma\rVert=r(\tau)$.
\end{proof}

The converse of Corollary \ref{outer:cor5} does not hold true in
general, as shown by the following counter example.

\begin{eg}
	Let $\mathcal{A}=M_{n_1}\oplus\cdots\oplus M_{n_d}$ be a finite dimensional $C^*$-algebra and choose $A\in\mathcal{A}$ such that $\left\|\frac{A^n}{r^n}\right\|$ forms a bounded sequence where $r=r(A)$. Let us consider the CP map $\tau=\alpha_A$ on $\mathcal{A}$. If $\mathcal{A}$ is a proper sub-algebra of $M_m$ ($m=n_1+\cdots+n_d$), $A$ will not generate the full matrix algebra $M_m$. First, we will prove that there is $\sigma$ which is elementarily similar to $\tau$ with $\|\sigma\|=r(\tau)$. Then, we will show that it does not satisfy the  hypothesis of Corollary \ref{outer:cor5}.
	
	Due to Theorem \ref{outer:thm15}, there exists positive, invertible $V\in\mathcal{A}$ such that $\| V^{-1}AV\|=r$. This implies
	\begin{equation}\label{outer:eq15}
		r^2\,I-(V^{-1}AV)^*(V^{-1}A V)\ge0\implies VA^*V^{-2}AV\le r^2\,I\implies A^*V^{-2} A\le r^2\, V^{-2}.
	\end{equation}
	Now, let $L=V^{-2}$ which is a positive and invertible element of $\mathcal{A}$. Recall that for elementary CP map $\tau=\alpha_A$, we have $r(\tau)=r(A)^2=r^2$. Therefore, \eqref{outer:eq15} shows that $\tau(L)\le r(\tau)\, L$. Using Theorem \ref{outer:thm4}, we get that there exists a CP map $\sigma$ which is elementarily similar to $\tau$ such that $\|\sigma\|=r(\tau)$.
	
	Now, since $A\in M_{n_1}\oplus\cdots\oplus M_{n_d}$, the canonical extension has the same Choi-Kraus decomposition $\widetilde{\tau}(X)=A^*XA$, $X\in M_m$. As $A$ does not generate the algebra $M_m$, it follows from Proposition \ref{outer:irreducible} that $\widetilde{\tau}$ is reducible; hence, so is $\tau$ by Proposition \ref{outer:prop8}.
\end{eg}

For the irreducible CP maps in Examples \ref{outer:eg2} and \ref{outer:eg4}, we now proceed to compute $\widehat{\tau}$, $\psi$, and the strictly positive eigenvector as outlined in Theorem \ref{outer:thm9}. Additionally, we determine the elementarily similar CP map $\sigma$ as described in Corollary \ref{outer:cor5}.

\begin{eg}
	Let us recall the irreducible CP map $\tau$ on $\mathcal{A}=M_2\oplus M_1$ from Example \ref{outer:eg2}, which is defined as follows:
	\[\tau\left(\begin{bmatrix}
		a & b & 0\\
		c & d & 0\\
		0 & 0 & e
	\end{bmatrix}\right)=\begin{bmatrix}
		a+e & 0 & 0\\
		0 & a+e & 0\\
		0 & 0 & d
	\end{bmatrix}.\]
	Since $\tau$ is irreducible, it is noted in Theorem \ref{outer:thm9}
	that $d_{\tau}=1$, implying $\|\tau^n\|_\beta=r^n$ for all but finitely many $n$ where $r$ is the
	spectral radius of $\tau .$ Thus,
	$\widehat{\tau}=\lim\limits_{N\to\infty}\frac{1}{N}\sum\limits_{n=1}^N\frac{\tau^n}{r(\tau)^n}$.
	By induction, we can prove that for all $n\ge4$,
	\[\tau^n\left(\begin{bmatrix}
		a & b & 0\\
		c & d & 0\\
		0 & 0 & e
	\end{bmatrix}\right)=\begin{bmatrix}
		x_{n-2}\,(a+e)+x_{n-3}\,d & 0 & 0\\
		0 & x_{n-2}\,(a+e)+x_{n-3}\,d & 0\\
		0 & 0 & x_{n-3}\,(a+e)+x_{n-4}\,d
	\end{bmatrix}.\]
	where $x_n$ denotes the $n^\text{th}$ term of the Fibonacci sequence
	with $x_0=1,x_1=2$. Recall that,
	$x_n=\frac{p^{n+2}-q^{n+2}}{\sqrt{5}}$ where
	$p=\frac{1+\sqrt{5}}{2}$ and $q=\frac{1-\sqrt{5}}{2}$. Then,
	$r=\lim\limits_{n\to\infty}\lVert{\tau}^n(I)\rVert^{1/n}=\lim\limits_{n\to\infty}x_n^{1/n}=p$.
	Since $\lim\limits_{n\to\infty}\frac{x_n}{r^n}=\frac{r^2}{\sqrt{5}}$
	and $r^2-r-1=0$ (which implies in particular $r^{-1}=r-1$,
	$r^{-2}=2-r$), we infer that $\frac{\tau^n}{r^n}$ converges as $n$
	tends to infinity and hence $\widehat{\tau}=\lim _{n\to
		\infty}\frac{\tau ^n}{r^n}.$ Then, direct computation yields
	\[\widehat{\tau}\left(\begin{bmatrix}
		a & b & 0\\
		c & d & 0\\
		0 & 0 & e
	\end{bmatrix}\right)=
	\frac{1}{\sqrt{5}}(a+(r-1)d+e)
	\begin{bmatrix}
		1 & 0 & 0\\
		0 & 1 & 0\\
		0 & 0 & (r-1)
	\end{bmatrix}.\]
	In particular, $\widehat{\tau}$ has the form $X\mapsto \psi (X)L$ for a
	faithful state $\psi $ and
	\[L=\widehat{\tau}(I)=\frac{1+r}{\sqrt{5}}\begin{bmatrix}
		1 & 0 & 0\\
		0 & 1 & 0\\
		0 & 0 & r-1
	\end{bmatrix}=\frac{1}{\sqrt{5}}\begin{bmatrix}
		r^2 & 0 & 0\\
		0 & r^2 & 0\\
		0 & 0 & r
	\end{bmatrix},\]
	which is positive, invertible and belongs to the eigenspace  of
	$\tau$ corresponding to the eigenvalue $r$. Let
	$\sigma=\alpha_V^{-1}\circ\tau\circ\alpha_V$ where $V=L^{1/2}$. Then,
	\[\sigma\left(\begin{bmatrix}
		a & b & 0\\
		c & d & 0\\
		0 & 0 & e
	\end{bmatrix}\right)=\begin{bmatrix}
		a+(r-1)\,e & 0 & 0\\
		0 & a+(r-1)\,e  & 0\\
		0 & 0 & r\,d
	\end{bmatrix}.\]
	We can easily verify that $\|\sigma\|=\|\sigma(I)\|=r=r(\tau)$.
\end{eg}

\begin{eg}
	Let us revisit the irreducible CP map $\tau$ on
	$\mathcal{A}=M_1\oplus M_1\oplus M_1$, considered in Example
	\ref{outer:eg4}. It is defined as follows:
	\[\tau\left(\begin{bmatrix}
		a & 0 & 0\\
		0 & b & 0\\
		0 & 0 & c
	\end{bmatrix}\right)=\begin{bmatrix}
		b & 0 & 0\\
		0 & a+c & 0\\
		0 & 0 & b
	\end{bmatrix}.\]
	
	It is easy to see that $\tau $ is diagonalizable with eigenvalues
	$\{0, \pm\sqrt{2}\}.$ Therefore $\tau $ has spectral radius
	$\sqrt{2}$ and  $\|\tau ^n\|_{\beta }= 2^{\frac{n}{2}}$ for all but finitely many $n$. Now,
	$\widehat{\tau }$ can be computed using its definition. It maps into the
	eigenspace of $\tau $ with eigenvalue $\sqrt{2}$ and is given by
	\[\widehat{\tau}\left(\begin{bmatrix}
		a & 0 & 0\\
		0 & b & 0\\
		0 & 0 & c
	\end{bmatrix}\right)=\frac{1}{2}(a+b\sqrt{2}+c)\begin{bmatrix}
		1 & 0 & 0\\
		0 & \sqrt{2} & 0\\
		0 & 0 &1
	\end{bmatrix}.\]
	Once again $\widehat{\tau}$ is of the form described in Theorem
	\ref{outer:thm9}.
\end{eg}

\section*{Acknowledgments}
Bhat gratefully acknowledges funding from  SERB (India) through J C
Bose Fellowship No. JBR/2021/000024. Sahasrabudhe thanks the Indian
Statistical Institute for hosting her through  Visiting Scientist
Fellowship, through J C Bose Fellowship of Bhat mentioned
above and DST-WISE fellowship.

\end{document}